\documentclass[11pt,a4paper]{amsart}
\usepackage[utf8x]{inputenc}
\usepackage{amsmath}
\usepackage{amssymb}
\usepackage{graphicx}
\usepackage{color}
\usepackage{amsthm}
\usepackage{bbm}
\usepackage{hyperref}
\numberwithin{equation}{section}
\title[Erd\H{o}s-R\'{e}nyi law]{The Erd\H{o}s-R\'{e}nyi law of large numbers for ballistic random walk in random environment}
\author{Darcy Camargo}
\address{Department of Mathematics, Weizmann Institute, POB 26, Rehovot 7610001, Israel}
\email{darcygcamargo@gmail.com}
\author{Yuri Kifer}
\address{Department of Mathematics, Hebrew University,  Givat Ram,
Jerusalem 9190401,
Israel}
\email{yuri.kifer@mail.huji.ac.il}
\author{Ofer Zeitouni}
\address{Department of Mathematics, Weizmann Institute, POB 26, Rehovot 7610001, Israel}
\email{ofer.zeitouni@weizmann.ac.il}
\thanks{This project has received funding from the European Research Council (ERC) under the European Union's Horizon 2020 research and innovation programme (grant agreement No. 692452)}

\newcommand{\e}{\varepsilon}

\newcommand{\pr}{\mathbb{P}}
\newcommand{\qr}{\mathbb{Q}}
\newcommand{\btau}{\bar{\tau}}
\newcommand{\pw}{\pr^{\omega}_0}
\newcommand{\ex}{\mathbb{E}}
\newcommand{\ind}{\mathbbm{1}}

\newcommand\aslimit{\mathrel{\overset{\makebox[0pt]{\mbox{\tiny a.s.}}}{\rightarrow}}}

\DeclareMathOperator*{\argmax}{arg\,max}

\newtheorem{teo}{Theorem}[section]
\newtheorem{lemma}[teo]{Lemma}
\newtheorem{df}[teo]{Definition}
\newtheorem{prop}[teo]{Proposition}

\newtheorem{cond}[teo]{Condition}
\begin{document}
\date{April 18, 2020}
\bibliographystyle{plain}

\begin{abstract}
We consider a one dimensional ballistic nearest-neighbor random walk in a random environment. We prove 
an Erd\H{o}s-R\'enyi strong law for the increments. 
\end{abstract}
\maketitle 
\section{Definitions and main results}

The classical  Erd\H{o}s-R\'enyi strong law of large numbers asserts as follows.
\begin{teo}[Erd\H{o}s-R\'enyi, 1970] 
  \label{theo-ER} Consider a  random walk $S_n=\sum_{i=1}^n X_i$ with $X_i$ i.i.d., satisfying $EX_1=0$. Set  $\phi(t)=E[e^{tX}]$ and let 
${\mathcal D}_\phi^+=\{t> 0: \phi(t)<\infty\}$. 
 Let  $\alpha>0$ be
 such that $\phi(t)e^{-\alpha t}$ achieves its  minimum value for some  $t$ in the interior of  ${\mathcal D}_\phi^+$. Set $1/A_\alpha:=
-\log \min \limits_{t>0} \phi(t) e^{-\alpha t}$. Then, $A_\alpha>0$ and 
\begin{equation}
  \label{eq-ERstat}
\max \limits_{0\leq j\leq n-\lfloor A_\alpha \log n\rfloor} 
\frac{S_{j+\lfloor A_\alpha \log n\rfloor}-S_j}{\lfloor A_\alpha
\log n\rfloor}\aslimit \alpha, \quad a.s.
\end{equation}
\end{teo}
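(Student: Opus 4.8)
The plan is to prove the two matching almost-sure one-sided bounds $\limsup_{n} M_n/T_n\le\alpha$ and $\liminf_{n} M_n/T_n\ge\alpha$, where throughout $T_n:=\lfloor A_\alpha\log n\rfloor$ and $M_n:=\max_{0\le j\le n-T_n}(S_{j+T_n}-S_j)$. First I would extract the convex-analytic content. Write $\Lambda(t)=\log\phi(t)$ and let $I(x)=\sup_{t\in\mathbb{R}}(tx-\Lambda(t))$ be the Cram\'er rate function. Since $\phi(t)e^{-\alpha t}$ attains its minimum at some $t^{*}$ in the interior of $\mathcal{D}_\phi^+$, the set $\{\phi<\infty\}$ is an interval containing a neighbourhood of $[0,t^{*}]$, so $\Lambda$ is smooth there, $\Lambda'(t^{*})=\alpha$, and $I(\alpha)=\alpha t^{*}-\Lambda(t^{*})$. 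Because $\Lambda'(0)-\alpha=EX_1-\alpha=-\alpha<0$, the map $t\mapsto\Lambda(t)-\alpha t$ decreases on $[0,t^{*}]$, increases on $[t^{*},\infty)$, and decreases on $(-\infty,0]$; hence $\min_{t>0}\phi(t)e^{-\alpha t}=e^{-I(\alpha)}\in(0,1)$, which shows $1/A_\alpha=I(\alpha)\in(0,\infty)$, i.e.\ $A_\alpha\in(0,\infty)$. Since $X_1$ is non-degenerate (if it were a.s.\ constant then $\phi(t)e^{-\alpha t}=e^{-\alpha t}$ would have no minimizer), $\Lambda''(t^{*})>0$, so by Legendre duality $I$ is finite, strictly convex and smooth on a neighbourhood of $\alpha$ with $I'(\alpha)=t^{*}>0$; consequently, for every sufficiently small $\delta>0$, $I(\alpha+\delta)>I(\alpha)=1/A_\alpha>I(\alpha-\delta)$, and $I$ is finite and continuous on $[\alpha-\delta,\alpha]$. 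These strict inequalities are precisely what the calibration $T_n\sim A_\alpha\log n$ is built to exploit.

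\textbf{Upper bound.}
Fix $\delta>0$. Chernoff's inequality gives $\pr(S_{T_n}\ge(\alpha+\delta)T_n)\le e^{-T_nI(\alpha+\delta)}$, and a union bound over the at most $n$ admissible starting points $j$ yields $\pr(M_n\ge(\alpha+\delta)T_n)\le n\,e^{-T_nI(\alpha+\delta)}$. Because $T_n$ varies with $n$, I would pass to an almost-sure statement along the level sets $\{n:T_n=k\}=[a_k,b_k)$, which are intervals with $b_k\asymp e^{k/A_\alpha}=e^{kI(\alpha)}$ and on each of which $M_n$ is non-decreasing in $n$ (one maximises over more windows of the \emph{same} length $k$). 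Hence it is enough to control $M_{b_k-1}$, and $\pr(M_{b_k-1}\ge(\alpha+\delta)k)\le b_k\,e^{-kI(\alpha+\delta)}\le C\,e^{-k(I(\alpha+\delta)-I(\alpha))}$, which is summable in $k$ by the first strict inequality above. Borel--Cantelli then gives $M_n<(\alpha+\delta)T_n$ for all large $n$, a.s., hence $\limsup_n M_n/T_n\le\alpha+\delta$; letting $\delta\downarrow0$ along a countable sequence gives $\limsup_n M_n/T_n\le\alpha$ a.s.

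\textbf{Lower bound.}
Fix $\delta\in(0,\alpha)$ and choose $\eta>0$ small enough that $\rho:=1-A_\alpha(I(\alpha-\delta)+\eta)>0$, possible because $A_\alpha I(\alpha-\delta)<1$. Partition $\{0,1,\dots,n\}$ into the $N_n:=\lfloor n/T_n\rfloor$ consecutive disjoint blocks of length $T_n$, and let $Y_i:=S_{iT_n}-S_{(i-1)T_n}$ for $1\le i\le N_n$ be the corresponding i.i.d.\ block increments. By the lower bound in Cram\'er's theorem, applicable at $\alpha-\delta$ since $I$ is finite and continuous there, $\pr(Y_1\ge(\alpha-\delta)T_n)\ge e^{-T_n(I(\alpha-\delta)+\eta)}$ for all large $n$ (recall $T_n\to\infty$). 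Since $M_n\ge\max_{1\le i\le N_n}Y_i$ and the $Y_i$ are independent, $\pr(M_n<(\alpha-\delta)T_n)\le(1-e^{-T_n(I(\alpha-\delta)+\eta)})^{N_n}\le\exp(-N_n\,e^{-T_n(I(\alpha-\delta)+\eta)})\le\exp(-c\,n^{\rho}/\log n)$, where the last step uses $T_n\le A_\alpha\log n$ (so $e^{-T_n(I(\alpha-\delta)+\eta)}\ge n^{-A_\alpha(I(\alpha-\delta)+\eta)}$) together with $N_n\ge c\,n/\log n$. This bound is summable in $n$, so Borel--Cantelli gives $M_n\ge(\alpha-\delta)T_n$ for all large $n$, a.s.; letting $\delta\downarrow0$ along a countable sequence yields $\liminf_n M_n/T_n\ge\alpha$ a.s. Intersecting the two almost-sure events completes the argument.

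\textbf{Main obstacle.}
The routine ingredients are Chernoff's inequality, Legendre duality, and the Borel--Cantelli bookkeeping. I expect the genuinely delicate input to be the sharp large-deviation lower bound $\pr(S_k\ge(\alpha-\delta)k)\ge e^{-k(I(\alpha-\delta)+o(1))}$ at the single point $\alpha-\delta$, which requires that $I$ be finite and right-continuous there, and it is exactly here that the hypothesis of a non-degenerate exponential tilt whose relevant minimizer sits in the interior of $\mathcal{D}_\phi^+$ is used. A secondary point demanding care is the level-set-and-monotonicity device in the upper bound, forced upon us because the window length $T_n$ itself depends on $n$: without it the union bound $n\,e^{-T_nI(\alpha+\delta)}$ is only polynomially small and need not be summable over all $n$.
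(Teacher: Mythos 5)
Theorem~\ref{theo-ER} is stated in the paper as the classical Erd\H{o}s--R\'enyi (1970) result and is not reproved there; the ``Proof strategy'' subsection only sketches the standard argument (Chernoff/upper large-deviation bound plus union bound for one direction, disjoint temporal blocks plus Cram\'er's lower bound for the other). Your write-up is precisely that standard argument, carried out correctly --- including the two points that actually require care: the level-set-plus-monotonicity device to handle the $n$-dependence of $T_n=\lfloor A_\alpha\log n\rfloor$ in the upper bound, and the use of $I(\alpha+\delta)>I(\alpha)=1/A_\alpha>I(\alpha-\delta)$ (guaranteed by $I'(\alpha)=t^*>0$ and convexity) to make the Borel--Cantelli sums converge.
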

\noindent
In the particular case of $X_i\in \{-1,1\}$, the assumptions of the theorem 
are satisfied for any $\alpha\in (0,1)$. 
The theorem also trivially generalizes to $EX_1\neq 0$,
by considering $Y_i=X_i-EX_i$.

Theorem \ref{theo-ER} is closely related to the large deviation principle
for $S_n/n$ given by Cramer's theorem, see e.g. \cite{DZ} for background. 
Indeed, with $I(x)=\sup_{t} (tx-\log \phi(t))$ denoting the rate function,
one observes that $I(\alpha)=1/A_\alpha$ and that 
\begin{equation}
  \label{eq-extrabla}
\alpha=\inf\{x>0: I(x)>1/A_\alpha\}.
\end{equation}


In this paper, we prove an analogous statement for standard one dimensional random walk in random environment (RWRE), in the case of positive velocity. We begin by introducing the model.
Fix a realization $\omega=\{\omega_i\}_{i\in \mathbb{Z}}$ with $\omega_i\in (0,1)$ of a
collection of  i.i.d. random variables,
which we call the \textit{environment}. With $p$ denoting the law of 
$\omega_0$ and $\sigma(p)$ its support,  denote by
 $P=p^{\mathbb{Z}}$ the law of the environment on $\Sigma_p:=\sigma(p)^\mathbb{Z}$. We make throughout the following assumption.
\begin{cond}[Uniform Ellipticity]\label{elipse} 
There exists a $\kappa\in (0,1)$ such that $\sigma(p) \subset [\kappa,1-\kappa]$  almost surely.
\end{cond}
\noindent 
Letting $\rho_i:=(1-\omega_i)/\omega_i$, we note that the ellipticity assumption gives a deterministic uniform upper and lower bounds on $\rho_i$.

It will be useful for us to consider also 
different laws of the environment $\Sigma=[\kappa,1-\kappa]^{\mathbb Z}$, not necessarily product laws. Such laws will be denoted $\eta$.
Equipping $\Sigma$ with the standard shift, the space of   measures (stationary/ ergodic) on $\Sigma$ are  denoted  $M_1(\Sigma)$ ($M_1^s(\Sigma)$/$M_1^e(\Sigma)$), respectively; similar definitions hold
when $\Sigma$ is replaced by $\Sigma_p$.

On top of $\omega$ we consider the RWRE, which is a 
nearest neighbor random walk $\{X_t\}_{t\in \mathbb{Z}}$.  
Conditioned on the environment
$\omega$,  
$\{X_t\}$ is a Markov chain with transition probabilities 
\[
\pi(i,i+1)=1-\pi(i,i-1)=\omega_i.
\] 
We denote the law of the random walk, started at $i\in {\mathbb Z}$ and conditioned on a fixed realization of the environment $\omega$, by $\pr_i^\omega$  (the so-called \textit{quenched} law).
 For any measure
$\eta\in M_1(\Sigma)$,   the measure
$\eta(d\omega)\otimes \pr_i^\omega$ is referred to as the \textit{annealed} law, and denoted
by $\pr^{a,\eta}_i$; with some abuse of notation, we sometimes say annealed law for the restriction of $\pr^{a,\eta}_i$ to path space. If $\eta=P$ then we write
$\pr^a_i$ for $\pr^{a,P}_i$. We use similar conventions for expectations, e.g. $ \mathbb{E}^a_i$ 
for expectation with respect to $\pr^{a,P}_i$, etc.

\subsection{The potential $V$ and functional $S$}

Introduce the \textit{potential function}, which is defined as
\begin{equation}
    V_\omega(j)=\left\{\begin{array}{ll} \sum \limits_{i=1}^{j} \log \rho_i(\omega),& \mbox{ if }j>0;\\ 0,& \mbox{ if }j=0;\\   -\sum \limits_{i=j+1}^{0} \log \rho_i(\omega),& \mbox{ if }j<0.    \end{array} \right. 
\end{equation}
and the Lyapunov function, see \cite{popovcomets},
\begin{equation}
   \label{defS} 
   S(n, \omega)=\left\{\begin{array}{ll} \sum \limits_{i=0}^{n-1} e^{V_\omega(i)},& \mbox{ if }n>0,\\ 0,& \mbox{ if }n=0,\\   \sum \limits_{i=n}^{-1} e^{V_\omega(i)},& \mbox{ if }n<0.    \end{array} \right. 
\end{equation}
By definition,  for $n>m\geq 0$ we can decompose $S(n,\omega)$ as 
\begin{equation}
   \label{Sdecomp} S(n, \omega)=S(m,\omega)+e^{V_\omega(m)}S(n-m,\theta^{m}\omega).
\end{equation}
Another important property of $S(n,\omega)$ is its relation to hitting times. Let $\tau_A=\inf\{t>0:X_t\in A \}$ and abbreviate  $\tau_{\{i\}}=\tau_i$ for $i\in {\mathbb Z}$. Then,
see e.g. \cite[(2.1.4)]{coursezeitouni}, 
\begin{align}
\label{lemmaS} \pr^{\omega}_x[\tau_0>\tau_{y}]=\frac{S(x,\omega)}{S(y,\omega)},\quad\, \mbox{\rm for} \; y>x>0 .
\end{align}
Also, for $n>0$,
\begin{equation}
\label{eqS}e^{\max \limits_{0\leq j\leq n-1} V_{\omega}(j) }\leq S(n,\omega) \leq n e^{\max \limits_{0\leq j\leq n-1} V_{\omega}(j)}.
\end{equation}


\subsection{Rate functions and modified  environments}
 We follow \cite{LDP} in introducing the function
\[
\phi (\omega, \lambda)=\ex_0^{\omega} [e^{\lambda\tau_1}\ind[\tau_1<\infty]],
\]
and the hitting  time  quenched rate function, defined for $\eta\in M_1(\Sigma)$,
\begin{equation}
  \label{eq-Iqdef}
I^{\tau,q}_\eta(u)=\sup \limits_{\lambda\in \mathbb{R}} \big\{\lambda u - \int \log \phi(\omega, \lambda)\eta(d\omega) \big\}.
\end{equation}
We denote the empirical field $R_n\in M_1(\Sigma)$ by
\begin{equation}
  \label{eq-Rn}
R_n= \frac{1}{n}\sum \limits_{j=0}^{n-1}\delta_{\theta^j\omega}.
\end{equation}
It is well known, see e.g. \cite{DZ}, that under $P$,
the sequence $R_n$ satisfies a large deviation principle in $M_1(\Sigma)$, equipped with the topology of weak convergence, with rate function $h(\cdot|\eta)$, the so-called specific relative entropy.

 
We need to consider the RWRE  conditioned on  not hitting the origin, i.e. conditioned on
 $\tau_0=\infty$. Using Doob's h-transform, it is straightforward to check that such conditional law is equivalent to using a transformed environment, namely for all measurable $A$ and $i\in {\mathbb Z}_+$,
 \[
 \pr_i^{\omega}[A\mid \tau_0=\infty]=\pr_i^{\hat{\omega}}[A], 
 \]
 where
 \begin{equation}
  \label{eq-hatomega}
 \hat{\omega}_i=\frac{\omega_i S(i+1,\omega)}{S(i,\omega)}.
 \end{equation}
 Note that due to \eqref{defS}, we have that  $\hat{\omega}_i\in [0,1]$.
 
For $L$ a positive integer, 
consider the following ergodic (with respect to shifts, 
if the law of $\omega$ is ergodic)
environment obtained as a transformation of $\omega$,
 \begin{equation}
   \label{eq-TL}
\hat\omega^L_i:=\frac{\omega_iS(L+1, \theta^{i-L}\omega)}{S(L, \theta^{i-L}\omega)}.
\end{equation}
Here again, $\hat\omega^L_i\in [0,1]$.
Introduce the function
\begin{equation}
  \label{eq-IL}
  I^F(x,\eta)= \lim \limits_{L\to \infty}\sup \limits_{\lambda\leq 0} 
  \big\{\lambda  -x \int \log \phi(\hat\omega^L, \lambda)\eta(d\omega) \big\},
\quad \eta\in M^s_1(\Sigma).
\end{equation}
The existence of the limit in \eqref{eq-IL}
is due to the following lemma, whose proof appears in Section \ref{sec-lem13}.
 \begin{lemma} \label{lemma2}For any fixed $i$, the sequence $\{\hat\omega_i^L \}$ is decreasing in $L\in \mathbb{Z}^+$. Moreover the limit in \eqref{eq-IL} exists for any $\eta\in M^s_1(\Sigma)$.
 \end{lemma}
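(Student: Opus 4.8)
The plan is to establish monotonicity of $\hat\omega_i^L$ in $L$ first, since this is the structural fact that drives everything; the existence of the limit in \eqref{eq-IL} will then follow from a monotone convergence / subadditivity argument combined with the uniform ellipticity bounds. For the monotonicity, I would work directly from \eqref{eq-TL}. Writing $\hat\omega_i^L = \omega_i \cdot S(L+1,\theta^{i-L}\omega)/S(L,\theta^{i-L}\omega)$, it suffices (after shifting so that $i-L$ plays the role of the base point, or equivalently fixing $i$ and tracking the ratio) to show that the map
\[
L \mapsto \frac{S(L+1,\theta^{i-L}\omega)}{S(L,\theta^{i-L}\omega)}
\]
is nonincreasing. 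Using the decomposition \eqref{Sdecomp} in the form $S(L+1,\w') = 1 + e^{V_{\w'}(1)} S(L,\theta\w')$ (taking $m=1$, $\w' = \theta^{i-L}\omega$, noting $V_{\w'}(0)=0$ so the first term is $S(1,\w')=1$), one reduces the claimed monotonicity to an inequality between consecutive ratios that can be verified by induction on $L$: if one sets $r_L := S(L+1,\theta^{i-L}\omega)/S(L,\theta^{i-L}\omega)$ and expresses $r_{L+1}$ in terms of $r_L$ via \eqref{Sdecomp}, the recursion is of the form $r_{L+1} = f(r_L)$ with $f$ monotone, and the base case $r_1 \ge r_2$ is a direct computation. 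An alternative, cleaner route is the h-transform interpretation: comparing \eqref{eq-hatomega} and \eqref{eq-TL}, $\hat\omega_i^L$ is the $\omega_i$ conditioned on the walk from $i$ not returning to the site $i-L$ (a finite-level analogue of conditioning on $\tau_0=\infty$), and conditioning on not returning to a \emph{farther} barrier is a weaker conditioning, hence pushes $\hat\omega_i^L$ down less — i.e. $\hat\omega_i^{L}$ decreases as $L$ grows, with limit $\hat\omega_i$ from \eqref{eq-hatomega}. I would present the generating-function computation as the rigorous version and mention the probabilistic picture as motivation.

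Given monotonicity, $\hat\omega_i^L \downarrow \hat\omega_i$ pointwise (in $\omega$, for each fixed $i$), and by Condition \ref{elipse} all these quantities lie in a fixed compact subinterval of $(0,1)$ bounded away from $0$ and $1$ — more precisely $\hat\omega_i^L \ge \omega_i \ge \kappa$ since $S(L+1,\cdot)\ge S(L,\cdot)$, and $\hat\omega_i^L \le 1$. Consequently $\phi(\hat\omega^L,\lambda) = \ex_0^{\hat\omega^L}[e^{\lambda\tau_1}\ind[\tau_1<\infty]]$ is, for $\lambda \le 0$, a bounded quantity (bounded by $1$), and I would show $\log\phi(\hat\omega^L,\lambda)$ converges appropriately as $L\to\infty$: for $\lambda\le 0$ the integrand $e^{\lambda\tau_1}\ind[\tau_1<\infty] \le 1$ and monotonicity of $\hat\omega^L$ in $L$ translates into monotonicity of the hitting-time law of $\tau_1$, hence of $\phi(\hat\omega^L,\lambda)$, in $L$. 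Then dominated (or monotone) convergence gives $\int \log\phi(\hat\omega^L,\lambda)\eta(d\omega) \to \int \log\phi(\hat\omega^\infty,\lambda)\eta(d\omega)$ for each fixed $\lambda\le 0$, where $\hat\omega^\infty$ denotes the pointwise limit.

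Finally, to get existence of the limit of the supremum in \eqref{eq-IL}, I would argue that $g_L(\lambda) := \lambda - x\int\log\phi(\hat\omega^L,\lambda)\eta(d\omega)$ is monotone in $L$ for each fixed $\lambda \le 0$ (from the monotonicity of $\phi(\hat\omega^L,\lambda)$ just discussed, together with the sign of $x>0$ and the fact that $\log\phi \le 0$), so $\sup_{\lambda\le 0} g_L(\lambda)$ is a monotone sequence in $L$; being also bounded (the ellipticity bound keeps $\phi$ controlled away from $0$, so $-\log\phi$ is bounded above on the relevant range of $\lambda$, where one checks the supremum is attained at some $\lambda$ bounded away from $-\infty$ using uniform ellipticity as in \cite{LDP}), it converges. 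The main obstacle I anticipate is \emph{not} the limit itself but pinning down the monotonicity of $\hat\omega_i^L$ cleanly: the shift $\theta^{i-L}$ moves with $L$, so one must be careful that the recursion is set up with the correct base point, and the generating-function identity \eqref{Sdecomp} has to be applied at the \emph{left} end (peeling off the site nearest $i-L$) rather than the right end. Once the recursion $r_{L+1}=f(r_L)$ is correctly identified with $f$ increasing and a contraction toward the fixed point $\hat\omega_i/\omega_i$, both the monotonicity and the existence of $\lim_L \hat\omega_i^L$ are immediate, and the rest is the routine interchange-of-limits bookkeeping sketched above.
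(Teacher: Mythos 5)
Your approach matches the paper's: you establish monotonicity of $\hat\omega_i^L$ via the decomposition \eqref{Sdecomp} applied at the left endpoint, transfer that monotonicity to $\phi(\hat\omega^L,\lambda)$ for $\lambda\le 0$ by stochastic domination, and conclude that the supremum in \eqref{eq-IL} is monotone in $L$, so the limit exists. The paper's proof is exactly this, with the domination step carried out via an explicit coupling as in Lemma~\ref{lemmaapproxPhat}; your phrase ``translates into monotonicity of the hitting-time law'' is the right idea but does need that coupling to be made rigorous.

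Two asides in your sketch are inaccurate, though neither is used downstream. First, the pointwise limit of $\hat\omega_i^L$ is \emph{not} $\hat\omega_i$ from \eqref{eq-hatomega}. Unwinding \eqref{eq-TL} via the relation between $W$ and $S$ gives $\hat\omega_i^L=\omega_i\bigl(1+1/S(-L,\theta^i\omega)\bigr)$, which decreases to $\omega_i\bigl(1+1/S(-\infty,\theta^i\omega)\bigr)$; probabilistically this is the step from $i$ conditioned never to hit $-\infty$, not never to hit $0$, and when $\int\log\rho_0\,d\eta<0$ it equals $\omega_i$ (since then $S(-\infty,\theta^i\omega)=\infty$). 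What you actually need is only $\kappa\le\omega_i\le\hat\omega_i^L\le 1$, which you also record. Second, the claimed recursion $r_{L+1}=f(r_L)$ does not close: writing $a=S(L+1,\theta^{i-L}\omega)$, $b=S(L,\theta^{i-L}\omega)$, $\rho=\rho_{i-L}$, \eqref{Sdecomp} gives $r_{L+1}=(1+\rho a)/(1+\rho b)$, which depends on $a$, $b$, $\rho$ separately and not on $r_L=a/b$ alone, so there is no one-variable iteration and no induction. Fortunately none is required: $r_{L+1}\le r_L$ is equivalent, after cross-multiplying, to $b\le a$, i.e.\ $S(L,\cdot)\le S(L+1,\cdot)$, which is immediate; that one-line verification is precisely the paper's computation of $\hat\omega_i^{L+1}-\hat\omega_i^L$.
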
 
For $\eta\in M_1^e(\Sigma)$, $I^F(x,\eta)$ has a natural interpretation as a rate function for the quenched LDP of the hitting times of the random walk in random environment,  conditioned on never hitting the origin, see Appendix~A.
 
 \subsection{Statement of main result}
 
 With all needed information gathered, we state the main result of the paper.

\begin{teo}\label{mainteo} 
Let $P=p^{\mathbb{Z}}$ satisfy Condition~\ref{elipse}. Set
\begin{equation}
  \label{def-s}
  s=\sup\{\theta>0:E_p \rho_0^\theta  \leq 1\}.
\end{equation}
Assume that  $s\in (1,\infty]$.
Fix $A>0$.  Then, for $k=k(n)$ positive integer such that $k(n)/\log n\to A$,
\begin{equation}
  \label{ER-RWRE}
    \max \limits_{1\leq t\leq n-k} \frac{X_{t+k}-X_t}{k}
    \to_{n\to\infty} x^*(A), \quad \pr^{a}_0-a.s.,
\end{equation}
where
\begin{equation}
  \label{eq-IRWRE}
x^*(A)=\inf\{x>0:I^{*}(x)>1/A \},
\end{equation}
and
\begin{equation}
  \label{eq-Is}
I^*(x)=\inf \limits_{\eta \in M_1^s(\Sigma_p)}\Big\{I^{F}\big(x,\eta\big)+xh(\eta|P)\Big\}.
\end{equation}
\end{teo}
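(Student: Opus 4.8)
The plan is to follow the classical Erd\H{o}s–R\'enyi route: the a.s.\ limit in \eqref{ER-RWRE} is governed by the large deviation rate for increments $X_{t+k}-X_t$ over a window of logarithmic length, and the key is to identify that rate. First I would rephrase the statement in terms of hitting times. Since the walk is ballistic (which is exactly what $s>1$ buys us, via Solomon's criterion $E_p\log\rho_0<0$), having $X_{t+k}-X_t\geq xk$ is, up to $O(1)$ corrections, the same as having $\tau_{X_t+\lfloor xk\rfloor}-\tau_{X_t}\leq k$, i.e.\ the walk crosses $\lfloor xk\rfloor$ sites in time at most $k$. So the upper bound $x^*(A)$ should be the largest $x$ such that, for a stretch of the environment of length $\ell=xk\sim xA\log n$, there is a realization (seen along the trajectory, with the walk conditioned not to backtrack to where it started — whence the environment $\hat\omega^L$) whose quenched probability of being crossed in time $k=\ell/x$ is at least $n^{-1}$ up to subexponential factors. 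Counting the $\sim n$ possible starting points $t$ and applying Borel–Cantelli in both directions is then the standard Erd\H{o}s–R\'enyi bookkeeping.

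Next I would make the variational formula \eqref{eq-Is} appear. Fix $x$ and a length $\ell$. For the event $\{\tau_\ell\le \ell/x\}$ to occur with $P$-probability not exponentially small in $\ell$, one pays on two fronts: the environment over the crossed block must look like some stationary (ergodic, by a standard reduction) law $\eta$ rather than $P$, which by Sanov/specific-relative-entropy costs $e^{-\ell h(\eta|P)}$; and conditionally on the environment being $\eta$-typical, the quenched probability of crossing $\ell$ sites in time $\ell/x$ decays like $e^{-\ell I^{\tau,q}_\eta(1/x)/\,?}$ — more precisely, because the relevant walk is the one conditioned to never return to the origin of the stretch, the correct quenched cost is the one built from $\phi(\hat\omega^L,\lambda)$, i.e.\ exactly $\ell\cdot I^F(x,\eta)/x$ after the change of variables turning "time per site $=1/x$'' into the $\lambda\le 0$ Legendre transform in \eqref{eq-IL} (the restriction $\lambda\le 0$ reflecting that we want an atypically \emph{short} crossing time, hence we tilt the hitting-time distribution downward). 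Adding the two costs and writing $k=\ell/x$, the exponential rate in $\log n$ of $\pr^a_0[\tau_\ell\le k]$ is $-A^{-1}\cdot\inf_{\eta}\{I^F(x,\eta)+xh(\eta|P)\}=-A^{-1}I^*(x)$, and $x^*(A)=\inf\{x: I^*(x)>1/A\}$ is precisely the threshold at which this probability crosses $n^{-1}$. Lower-semicontinuity and the non-decreasing, lower-semicontinuous nature of $I^*$ (inherited from $I^F$ and $h$, using Lemma~\ref{lemma2} for the former) guarantee $x^*(A)$ is well-defined and that the $\inf$ is essentially attained, which is what is needed to match upper and lower bounds.

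The execution then splits into the two Borel–Cantelli halves. For the upper bound, $\pr^a_0[\max_{1\le t\le n-k}(X_{t+k}-X_t)\ge (x^*(A)+\e)k]\le n\,\pr^a_0[\tau_{\lfloor(x^*(A)+\e)k\rfloor}\le k]$ (after handling the start-point dependence, which requires controlling the environment as seen from $X_t$ — this is routine given stationarity of $P$ and uniform ellipticity, which makes $X_t$ at most $O(k)$ away from a deterministic site with overwhelming probability). By the rate computation, $I^*(x^*(A)+\e)>1/A$, so the probability is $n\cdot n^{-I^*(x^*(A)+\e)/A+o(1)}=n^{1-(1+\delta)}$ for some $\delta>0$ along the subsequence $n=2^m$, giving summability; a monotonicity/interpolation argument fills the gaps between powers of $2$ (standard since $k(n)$ grows logarithmically). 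For the lower bound, one needs, for each $\e>0$, infinitely many (in fact a positive-density set of) blocks $[t,t+k]$ that are crossed in time $\le k$ with an increment $\ge (x^*(A)-\e)k$; since $I^*(x^*(A)-\e)\le 1/A$, each block succeeds with probability $\ge n^{-1+\delta'}$, and one partitions $\{1,\dots,n\}$ into $\sim n/k$ essentially independent blocks (independence up to the $O(k)$-range dependence in the quenched law, handled by a standard blocking/de-coupling argument using ellipticity) so that the expected number of successes is $\gg n^{\delta'}\to\infty$, and a second-moment or direct Borel–Cantelli argument over $n=2^m$ finishes it. The main obstacle is the second step: proving that the two costs — environmental relative entropy and quenched crossing cost — genuinely decouple and combine additively, i.e.\ establishing the precise large deviation asymptotics of $\pr^a_0[\tau_\ell\le \ell/x]$ with rate $I^*(x)$, with matching upper and lower bounds. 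The upper bound there is a contraction/Laplace-principle argument over the empirical field $R_\ell$ combined with the quenched hitting-time LDP of \cite{LDP}; the lower bound requires exhibiting, for a near-optimal $\eta$, environments that are simultaneously $\eta$-typical over the whole block and along which the conditioned walk crosses quickly — the conditioning (replacing $\omega$ by $\hat\omega^L$, $L\to\infty$) and the need for a uniform-in-$L$ control, i.e.\ genuine use of Lemma~\ref{lemma2}, is the delicate point, together with the self-consistency between the block length $\ell$ (needed large for the LDP) and the window length $k$ (tied to $\log n$).
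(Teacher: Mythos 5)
Your high-level plan captures the right ingredients — conversion to hitting times, the conditioned environment $\hat\omega^L$, the decomposition into an environmental entropy cost $xh(\eta|P)$ plus a quenched-LDP cost $I^F(x,\eta)$, and Borel--Cantelli bookkeeping on both sides — and in that sense it is aligned in spirit with the paper. But there are genuine gaps precisely at the points where you wave them away as ``standard bookkeeping'' or ``$O(1)$ corrections,'' and they are exactly where the paper spends most of its effort.

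The central gap is the claim that $\{X_{t+k}-X_t\geq xk\}$ is ``up to $O(1)$ corrections'' the same as $\{\tau_{X_t+\lfloor xk\rfloor}-\tau_{X_t}\leq k\}$, followed by a naive union bound over starting points and a naive decoupling for the lower bound. This breaks down because the walk may visit a given site, and hence a given spatial block of length $\sim\log n$, many times, and each visit is a fresh chance to produce a fast increment; conversely, to \emph{fail} to produce a fast increment one must fail on every visit. The number of visits is itself random and depends on the environment outside the block. The paper handles this head on: Lemma~\ref{mainlemma} replaces $\pr^a_0[\tau_{xk}\le k]$ by the more intricate quantity
\[
\chi(k,x,c)=\int\frac{1-f(\omega,x,k)}{1-f(\omega,x,k)(1-g(\omega,x,c,k))}\,P(d\omega),
\]
where $f=\pr^{\hat\omega}_1[\tau_{xk}>k]$ and $g=S(xk,\omega)/S(ck,\omega)$. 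The ratio arises from resumming the geometric series of repeated attempts to cross the block, with $g$ measuring the escape probability; it is \emph{not} the same as the one-shot crossing probability, even at exponential scale, until one has done the analysis of Section~\ref{sec-block}. Establishing that $\chi$ controls $\dot X(k,\tau_{v_pn})$ from both sides requires the non-backtracking Lemma~\ref{LemmaApendix}, which is what lets one localize the relevant environment to a block of length $ck$ for a large constant $c$ (not $k$), and pass from increments under $\pr^a_0$ at fixed times to hitting-time events on spatial blocks. Your proposal does not anticipate $\chi$, uses blocks of size $k$ rather than $ck$, and treats the decoupling for the lower bound as ``$O(k)$-range dependence handled by ellipticity'' — but the obstruction is not finite range, it is the possibility of many revisits, and it is resolved only by the non-backtracking estimate together with the product structure of $P=p^{\mathbb{Z}}$ across disjoint spatial blocks (see \eqref{prodprob}--\eqref{prodprob3}). (Also, the paper first proves the result with $\tau_{v_pn}$ in place of $n$ and then transfers via \eqref{taulim}; this step is absent from your plan.) The identification of the rate of $\chi$ in Proposition~\ref{limitteo} does follow the Sanov-plus-quenched-LDP decomposition you describe, and your identification of the variational formula $I^*(x)=\inf_\eta\{I^F(x,\eta)+xh(\eta|P)\}$ is correct — but without the reduction to $\chi$ that formula is applied to the wrong quantity, and the Borel--Cantelli argument does not close.

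Finally, a small but real arithmetic slip: the exponential rate of the relevant probability in $\log n$ is $-A I^*(x)$, not $-A^{-1}I^*(x)$; you recover the right threshold $AI^*(x^*(A))=1$ only because the error cancels in the final comparison.
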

\noindent (Compare \eqref{ER-RWRE} and \eqref{eq-IRWRE} with
\eqref{eq-ERstat} and \eqref{eq-extrabla}.)

Let 
\begin{equation}
  \label{eq-vp}
  v_p:=\frac{1-E_p(\rho_0)}{1+E_p(\rho_0)}.
\end{equation}
We remark, see \cite{coursezeitouni}, that the condition $s\in (1,\infty]$
is equivalent to $E_p(\rho_0)<1$ and 
is also equivalent to
the convergence 
\begin{equation}
  \label{eq-LLNconv}
  \frac{X_n}{n} \to_{n\to\infty}  v_p>0, \quad  \pr^a-a.s..
\end{equation}
That is, we are dealing here with the transient ballistic case. It also implies that $E_p \log \rho_0 <0$.

We further note that
it follows from the definitions that $x\mapsto I^*(x)$ is a convex increasing 
function on $\mathbb{R}_+$, with $I^*(0)=0$ and $I^*(x)\to_{x\to\infty} \infty$.
Thus, $I^*$ is continuous on its domain  and
strictly increasing in 
the set $\{x: \infty>I^*(x)>0\}$.
Therefore, $x^*(A)$ is well defined and satisfies $AI^*(x^*(A))=1$.
It is also obvious from Theorem \ref{mainteo} that $x^*(A)\leq 1$.

\subsection{Proof strategy}
The standard proof of Theorem \ref{theo-ER} and of its 
extensions to sums of weakly dependent random variables 
usually consists of an upper
and of a lower bounds for increments within time intervals (which we refer to
as  temporal blocks) of length $A_\alpha \log n$. 
The former relies only on the upper 
large deviations bound for such sums 
 while the latter in addition to the lower
large deviations bound requires also sufficiently weak dependence which enables to split the sum into weakly dependent disjoint blocks
(this step is, of course, trivial in the independent case). In this way the corresponding random walk is split into weakly dependent temporal blocks. Such a temporal splitting  is not possible 
in our case of random environment, since (under the annealed measure) 
increments of the random walk in disjoint time intervals are strongly  
correlated. So instead, in the proof of Theorem \ref{mainteo}
we use a spatial decoupling of the walk in order to obtain both
upper and lower bounds on maximal increments. 
This leads to several complications. 
First, the increments of the walk in different spatial blocks are not independent. Secondly, and more important, the walk may visit a block many times, and the probability to do so depends not only on the environment in the block but also
on adjacent blocks.

The first difficulty is relatively easily dispensed with by appealing to a standard 
non-backtracking estimate (Lemma \ref{LemmaApendix}). 
This allows us to consider only blocks of size $c\log n$ for some large $c$.
To address the second issue, we use the environment $\hat \omega$,
see \eqref{eq-hatomega}, representing the environment under the condition
of not backtracking at all, and use it to introduce the crucial
quantity $\chi(k,x,c,\eta)$ which serves as a proxy for the probability 
of having a fast segment of the walk in a block of length $xk$ with $k=k(n)$ such that $k/\log n \to A$, under the ergodic measure $\eta$, see \eqref{eq-chi}
for the definition 
and the crucial Lemma \ref{mainlemma} for the representation of the 
maximal increment in terms of $\chi$ (by fast segment we mean a segment which
crosses the block faster than typical, that is with speed $1/x$). The rest of the proof
involves a study of $\chi$, which is an expectation
(with respect to $P$) of functions of the environment (some of which represent 
quenched large deviations). As in \cite{LDP},
the latter can be represented in terms of a variational problem involving a
change in the environment, and a function of quenched large deviations estimate
for the RWRE, see \eqref{eq-Is} for the form 
of the variational principle. 

We remark that the proof of Lemma \ref{lemma2} requires several 
approximation steps due to the fact that the environemnt $\hat \omega$
is not an ergodic environment under $\eta$. This is carried out in Section \ref{sec-lem13}.
On the other hand, the identification of the rate function requires 
a study of the variational principle, and it is in 
the latter study that we use the assumption that $s>1$, see
the statement of Theorem \ref{mainteo}.

\subsection{Notation}
For two sequences (of possibly random variables)
$a_n$ and $b_n$ we will say $a_n\sim b_n$ if it holds almost surely that 
\[
\lim_{n \to \infty} \frac{a_n}{b_n}=1.
\]
 We say that $a_n=o(b_n)$ if $a_n/b_n \to 0$ almost surely (with respect to
 the measure under consideration) as $n\to \infty$.
Constants, whose values are fixed throughout the paper, are denoted
by $\alpha_i$ and we fix 
\begin{equation}
  \label{eq-ckappa}
  C_\kappa := (1-\kappa)/\kappa>1. 
\end{equation}

The shift operator on $\Sigma$ is denoted by $\theta$, so $(\theta^i\omega)_j:=\omega_{i+j}$.
 We also define the flipped and reversed 
 environments $\bar{\omega}$ and $r(\omega)$ by
 \begin{equation}
   \label{eq-mrs2}
   \bar{\omega}_i:=1-\omega_i, \qquad 
   r(\omega)_i:=\omega_{-i}.
 \end{equation}

Recall that  $\tau_i=\inf\{t>0: X_t=i\}$. We denote the subsequent visits to a site by $\tau_i(j)=\inf \{t>\tau_i(j-1): X_t=i \}$, $j\geq 2$, with $\tau_i(1)=\tau_i$.
We denote by $\ell(i,t)$ the partial local time of a 
site $i$ up to time $t$, i.e.
\begin{equation}
  \label{eq-localtime}
\ell(i,t):=\sum \limits_{j=0}^{t}\ind[X_j=i].
\end{equation}
The  local time at $i$ is defined as $\ell(i):=\lim_{t\to \infty} \ell(i,t)$.

We also define some functionals that depend on $V$ and $S$ and will be useful later.
\begin{align}
    \label{defW} W(n,\omega)=\frac{e^{V_\omega(n)}}{S(n,\omega)},
\end{align}
\begin{align}
    \label{defxi} \xi_n(i,\omega)=\frac{W(i+1,\theta^{n-i}\omega)}{1+S(i,\theta^{n-i}\omega)W(n-i,\omega)},
\end{align}
and
\begin{align}
    \label{defxibar} \bar\xi(i,\omega)=\frac{W(i+1,\omega)}{1+{S(i,\omega)}/{S(-\infty,\omega)}}.
\end{align}
We will see in \eqref{Winverse} below that
$\xi_n(i,\omega)\leq \bar\xi(i,\theta^{n-i}\omega)$.



\section{Non-backtracking estimate}
We provide in this section non-backtracking estimates which will be crucial in
obtaining spatial decoupling of events.
\begin{lemma}\label{Wbound} Assume 
  $\eta\in M_1(\Sigma)$. Then, for every $n\geq 1$,
  \begin{equation}\label{eq-mrs1}
W(n,\omega)\leq \frac{1-2\kappa}{\kappa}\Big(1-\Big(\frac{\kappa}{1-\kappa}\Big)^n \Big)^{-1} \leq  \frac{1-\kappa}{\kappa}, 
\quad \eta-\mbox{a.s.}.
\end{equation}
\end{lemma}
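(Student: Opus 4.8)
The plan is to convert the statement into an elementary geometric–series estimate by expressing $W(n,\omega)$ through the ratios $\rho_j=(1-\omega_j)/\omega_j$. Since $e^{V_\omega(i)}=\prod_{j=1}^i\rho_j$ for $i\geq 1$ (empty product $=1$ when $i=0$), the definitions \eqref{defS} and \eqref{defW} give, for $n\geq 1$,
\[
\frac{1}{W(n,\omega)}=\frac{S(n,\omega)}{e^{V_\omega(n)}}=\sum_{i=0}^{n-1}\frac{e^{V_\omega(i)}}{e^{V_\omega(n)}}=\sum_{i=0}^{n-1}\prod_{j=i+1}^{n}\rho_j^{-1}=\sum_{k=1}^{n}\prod_{j=n-k+1}^{n}\rho_j^{-1},
\]
the last equality being the reindexing $k=n-i$. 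This identity is the one step that needs a little care, but it is purely bookkeeping.

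Next I would invoke uniform ellipticity. Since $\eta\in M_1(\Sigma)$ with $\Sigma=[\kappa,1-\kappa]^{\mathbb Z}$, \emph{every} configuration $\omega\in\Sigma$ has $\omega_j\in[\kappa,1-\kappa]$, hence $\rho_j\leq (1-\kappa)/\kappa=C_\kappa$ deterministically; in particular $\rho_j^{-1}\geq C_\kappa^{-1}=\kappa/(1-\kappa)$. Plugging this into the identity above and summing the geometric series (note $\kappa<1/2$ because $C_\kappa>1$, so $1-C_\kappa^{-1}=(1-2\kappa)/(1-\kappa)>0$),
\[
\frac{1}{W(n,\omega)}\geq\sum_{k=1}^{n}C_\kappa^{-k}=\frac{C_\kappa^{-1}\bigl(1-C_\kappa^{-n}\bigr)}{1-C_\kappa^{-1}}=\frac{\kappa}{1-2\kappa}\Big(1-\Big(\frac{\kappa}{1-\kappa}\Big)^{n}\Big),
\]
and inverting yields the first inequality in \eqref{eq-mrs1}. (The bound in fact holds pointwise on $\Sigma$, the $\eta$-a.s.\ phrasing being merely convenient.)

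Finally, for the second inequality of \eqref{eq-mrs1} I would simply note that $\kappa/(1-\kappa)\in(0,1)$, so $(\kappa/(1-\kappa))^{n}\leq \kappa/(1-\kappa)$ for every $n\geq1$, whence $1-(\kappa/(1-\kappa))^{n}\geq 1-\kappa/(1-\kappa)=(1-2\kappa)/(1-\kappa)$ and therefore
\[
\frac{1-2\kappa}{\kappa}\Big(1-\Big(\frac{\kappa}{1-\kappa}\Big)^{n}\Big)^{-1}\leq \frac{1-2\kappa}{\kappa}\cdot\frac{1-\kappa}{1-2\kappa}=\frac{1-\kappa}{\kappa}.
\]
There is no genuine obstacle in this lemma: the argument is a direct computation, and the only things one must watch are the reindexing in the first display and the use of the deterministic (not merely $\eta$-almost sure) two-sided bound on $\rho_j$ coming from Condition~\ref{elipse}.
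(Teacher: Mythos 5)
Your proof is correct and follows essentially the same route as the paper: the paper derives the recursion $\tfrac{1}{W(n,\omega)}=\tfrac{1}{\rho_n}(1+\tfrac{1}{W(n-1,\omega)})$ and iterates it, whereas you unwind that recursion into the explicit closed-form sum $\sum_{k=1}^n\prod_{j=n-k+1}^n\rho_j^{-1}$; in both cases the ellipticity bound $\rho_j\leq C_\kappa$ reduces the estimate to the same geometric series. Your remark that the bound is pointwise on $\Sigma$ (not merely $\eta$-a.s.) and your check that $\kappa<1/2$ is implicit in $C_\kappa>1$ are both accurate.
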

\begin{proof}
First observe that $W(n,\omega)$ satisfies 
\begin{align*}
  \frac{1}{W(n,\omega)}=\frac{1}{\rho_{n}}\Big(1+\frac{1}{W(n-1,\omega)}, \Big)
\end{align*}
and hence 
\begin{align*}
    \frac{1}{W(n,\omega)}\geq \frac{\kappa}{1-\kappa}\Big(1+\frac{1}{W(n-1,\omega)}\Big).
\end{align*}
Inductively applying this relation we get 
\begin{align*}
    \frac{1}{W(n,\omega)}\geq \sum \limits_{j=1}^{n-i}\Big( \frac{\kappa}{1-\kappa}\Big)^{j} +\Big(\frac{\kappa}{1-\kappa}\Big)^{n-i}\frac{1}{W(i,\omega)}.
\end{align*}
Using that $W(1,\omega)=\rho_0\leq (1-\kappa)/\kappa$ we conclude
\begin{align*}
    \frac{1}{W(n,\omega)}\geq \sum \limits_{j=1}^{n}\Big( \frac{\kappa}{1-\kappa}\Big)^{j} =\Big( \frac{\kappa}{1-\kappa}\Big)\frac{1-\Big(\frac{\kappa}{1-\kappa}\Big)^n }{1-\Big( \frac{\kappa}{1-\kappa}\Big)},
\end{align*}
yielding the first inequality in \eqref{eq-mrs1}. The second inequality follows from monotonicity in $n$.
\end{proof}
For $a$ a positive integer, set  $\tau^{\text{BT}}_y(a)=\inf\{t>\tau_y:X_t=y-a\}$ (possibly $\tau^{\text{BT}}=\infty$)
as the first backtracking time of $a$ steps for the walk after hitting $y$, and introduce the backtracking event
\begin{align}
    \label{BTevent2} B(n,a)=\bigcup \limits_{y=1}^{n} \{\tau^{\text{BT}}_y(a)<\tau_{y+1}\}.
\end{align}
 The following standard lemma shows that large logarithmic in $n$ 
backtrackings
are not occuring before hitting position $n$.
\begin{lemma}
\label{LemmaApendix} 
  Assume that $P=p^{\mathbb{Z}}$ satisfies the conditions of
  Theorem \ref{mainteo}.
   Then there exists a constant 
  $\alpha_0>0$ so that, for any
$A>0, c>0$ and $k=k(n)\sim A\log n$ so that $ck$ is an integer, 
and all $n$ large,
\begin{equation}\label{apendix2}
    \pr^{a}_0[B(n,ck)]\leq  n^{1-\alpha_0cA}.
\end{equation}
\end{lemma}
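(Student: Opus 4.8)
The plan is to bound the probability of the backtracking event $B(n,ck)$ by a union bound over the $n$ possible backtracking sites $y$, estimating for each $y$ the quenched probability that, after first hitting $y$, the walk backtracks $ck$ steps to $y-ck$ before reaching $y+1$. The key point is that this quenched probability is a hitting probability that can be read off from the function $S$ via \eqref{lemmaS}: conditioned on being at $y$, the walk hits $y-ck$ before $y+1$ with probability $S(1,\theta^y\omega)/S(ck+1,\theta^{y-ck}\omega)$ after an appropriate shift, or more precisely is controlled by a ratio of the form $e^{V_\omega(y)-V_\omega(y-ck)}$ times a polynomial factor, using \eqref{eqS} to sandwich $S$ between its maximal term and $n$ times that term. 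So the first step is to write
\[
\pr_0^a[B(n,ck)] \leq \sum_{y=1}^n E_P\big[\pr_y^\omega[\tau^{\text{BT}}_y(ck)<\tau_{y+1}]\big],
\]
using the strong Markov property at $\tau_y$ and the fact that under the annealed measure the law of the environment seen from $y$ is still $P$ (by translation invariance of $P=p^{\mathbb Z}$), so that each summand is in fact a single number $q_{ck}:=E_P[\pr_{ck}^\omega[\tau^{\text{BT}}_{ck}(ck)<\tau_{ck+1}]]$ and the bound becomes $n\cdot q_{ck}$.

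The second step is to estimate $q_{ck}$. By \eqref{lemmaS} (after translating so the relevant window sits at the origin) the quenched probability of backtracking $ck$ steps before stepping right once is bounded by $S(0^-\!,\cdot)/S(\cdot)$-type ratios; in any case it is at most something of the form $C_\kappa \cdot e^{-(V_\omega(0)-V_\omega(-ck))}$ up to the polynomial factor from \eqref{eqS}, i.e. up to a factor $ck$. Now $V_\omega(0)-V_\omega(-ck)=\sum_{i=-ck+1}^{0}\log\rho_i$ is a sum of $ck$ i.i.d. terms, so taking $P$-expectation gives $E_P[e^{-(V_\omega(0)-V_\omega(-ck))}] = (E_P[\rho_0^{-1}])^{ck}$. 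The crucial input is that under the hypothesis $s>1$ of Theorem \ref{mainteo} we have $E_P[\log\rho_0]<0$, hence by Jensen/convexity $E_P[\rho_0^{-1}]>1$ is not immediately useful — so instead one should optimize: for any $\beta\in(0,1]$ one has $\pr_y^\omega[\tau^{\text{BT}}_y(ck)<\tau_{y+1}] \leq (\text{poly})\cdot e^{-\beta(V_\omega(0)-V_\omega(-ck))}$ only if the backtracking probability itself is raised to a power, which it is not; the honest route is to note the backtracking probability is bounded by $\exp(-(V_\omega(0)-\min_{-ck\le j\le 0}V_\omega(j)))$ and then use that $s>1$ forces $E_P[\rho_0^\theta]<1$ for $\theta\in(0,s)$, in particular for $\theta$ slightly below $1$, which controls the left tail of $V$. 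Concretely, choose $\theta_0\in(1,s)$ (or $\theta_0\in(1,\min(s,2))$ if $s<\infty$), so that $\lambda_0:=E_P[\rho_0^{\theta_0}]<1$; then a Chebyshev/exponential-moment bound on $S(ck,\cdot)$ gives $E_P[q_{ck}]\leq (ck)\,\lambda_0^{ck/\theta_0}\cdot(\text{const})$ — more carefully, $q_{ck}\le E_P[e^{-(V_\omega(0)-V_\omega(-ck))}\wedge 1]$ and splitting on whether $V_\omega(0)-V_\omega(-ck)\ge 0$ and bounding $e^{-x}\le e^{-\theta_0 x/\theta_0}$... the clean statement is that there is $\delta>0$ with $q_{ck}\le n^{o(1)} e^{-\delta ck}$, and since $ck\sim cA\log n$ this is $n^{-\delta cA + o(1)}$, so $n\cdot q_{ck}\le n^{1-\delta cA + o(1)}\le n^{1-\alpha_0 cA}$ for a suitable $\alpha_0\in(0,\delta)$ and all $n$ large. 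Thus $\alpha_0$ is any positive constant strictly below the decay rate $\delta$ coming from the left-tail exponent of $V$, which is available precisely because $s>1$.

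I expect the main obstacle to be pinning down the exponential decay rate $\delta$ of $q_{ck}$ rigorously from the assumption $s>1$. The subtlety is that $\pr_y^\omega[\tau^{\text{BT}}_y(ck)<\tau_{y+1}]$ is a hitting ratio $\le e^{V_\omega(y)-\max_{y-ck\le j\le y}V_\omega(j)}$ (times a polynomial factor), and its $P$-expectation is governed not by $E_P[\rho_0^{-1}]$ but by the decay of the upper tail of the running maximum of the reversed walk $V_\omega(y)-V_\omega(y-\cdot)$, which is a mean-negative random walk when $E_P\log\rho_0<0$; its maximum has exponentially small upper tail with rate exactly related to the smallest positive $\theta$ with $E_P[\rho_0^\theta]=1$, i.e. to $s$. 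One must either invoke a standard ladder-height / Cramér estimate for the maximum of a mean-negative random walk (the tail of the all-time maximum decays like $e^{-s\cdot x}$), combined with the observation that $e^{V_\omega(y)-\max_j V_\omega(j)}\le 1$ already, so that one gains a genuine exponential factor only from the event that the running maximum over the window is large — equivalently, one writes $q_{ck}\le P[\max_{0\le m\le ck}(V_\omega(0)-V_\omega(-m))\ge u] + e^{-u}$ and optimizes over $u$, getting $u\asymp ck$ by the Cramér rate, hence $\delta>0$. Everything else (the union bound, translation invariance, the $S$-ratio formula \eqref{lemmaS}, the polynomial slack from \eqref{eqS}) is routine; the quantitative tail estimate for the mean-negative potential, and its dependence on $s>1$, is the one place where real care is needed.
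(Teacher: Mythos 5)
Your overall skeleton — union bound over the $n$ possible backtracking sites, translation invariance of $P=p^{\mathbb{Z}}$ to reduce to a single quenched probability at $y=ck$, the $S$-ratio formula \eqref{lemmaS} to express that probability in terms of the potential — is exactly the paper's first three lines, and it is correct. Where you diverge is in the final tail estimate, and there you overcomplicate and also introduce an error.

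The paper's estimate is much cheaper than what you propose. It never invokes the running maximum of the potential over the backtracking window, nor a ladder-height / Cram\'{e}r estimate for the maximum of a mean-negative walk. Instead, from $\pr^{a}_{ck}[\tau_0<\tau_{ck+1}]=\int \frac{W(ck,\omega)}{W(ck,\omega)+1}P(d\omega)$ and the trivial inequality $W(ck,\omega)=e^{V_\omega(ck)}/S(ck,\omega)\leq e^{V_\omega(ck)}$ (since $S(ck,\omega)\geq 1$), one gets the one-line bound $\frac{W(ck,\omega)}{W(ck,\omega)+1}\leq\min(1,e^{V_\omega(ck)})$. Then $V_\omega(ck)=\sum_{i=1}^{ck}\log\rho_i$ is a sum of i.i.d.\ \emph{bounded} (by ellipticity) random variables with mean $\mu:=E_P\log\rho_0<0$. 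Splitting the $P$-expectation on the event $\{V_\omega(ck)\leq ck\mu/2\}$, the first piece gives $e^{ck\mu/2}$ and the complement has probability $\leq e^{-\alpha ck}$ by Hoeffding's inequality. This is the whole estimate: no finer analysis of the potential's excursion, and no need to extract the sharp rate $s$ — the lemma only requires \emph{some} $\alpha_0>0$, and the needed input from the hypothesis $s>1$ is only that $\mu<0$. Your proposed route (control the running max of the mean-negative walk via a Cram\'{e}r-type tail, then optimize over a threshold $u$) would also work, but it substitutes a uniform-in-time fluctuation estimate where a single-time Hoeffding bound suffices.

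There is also a genuine sign error in your intermediate bound. You write that the quenched backtracking probability is bounded by $\exp\bigl(-(V_\omega(0)-\min_{-ck\le j\le 0}V_\omega(j))\bigr)$. Since $V_\omega(0)=0$ and, for $j<0$, $V_\omega(j)=-\sum_{i=j+1}^0\log\rho_i$ is a walk with positive mean increments $-\mu>0$ as $j$ decreases, the minimum over $[-ck,0]$ is typically $O(1)$ and your bound is close to $1$, hence useless. The correct quantity — after centering at the backtracking site and applying \eqref{lemmaS} together with \eqref{eqS} — involves the \emph{maximum}, not the minimum: $\pr^{\omega}_0[\tau_{-ck}<\tau_1]\leq \exp\bigl(-\max_{-ck\le j\le 0}V_\omega(j)\bigr)$, which is exponentially small because the max is typically of order $-ck\mu>0$. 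In the paper's parametrization this is the bound $e^{V_\omega(ck)-\max_{0\le j\le ck}V_\omega(j)}\leq e^{V_\omega(ck)}$, which is what the paper actually uses. Correcting this would repair your approach, but the simpler $\min(1,e^{V_\omega(ck)})$ bound plus Hoeffding is the cleaner path.
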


\begin{proof}
 Observe that 
\[\pw[B(n,ck)]=\pw\Big[\bigcup 
\limits_{y=1}^{n} \{\tau^{\text{BT}}_y(ck)<\tau_{y+1} \} \Big]
\leq \sum \limits_{y=1}^{n} \pw[\tau^{\text{BT}}_y(ck)<\tau_{y+1} ],
\]
and therefore,
\begin{align}
  \label{intS}
     \pr^a_0[B(n,ck)] &\leq n\pr^{a}_{ck}[\tau_{0}<\tau_{ck+1} ]
    =n \int \Big(1-\frac{S(ck,\omega)}{S(ck+1,\omega)}\Big) P(d\omega)\\
    \nonumber &=n \int \Big(\frac{e^{V_\omega(ck)}}{S(ck+1,\omega)}\Big) P(d\omega)
    =n\int \Big(\frac{W(ck,\omega)}{W(ck,\omega)+1}\Big) P(d\omega).
\end{align}
Let $\mu=\int \log \rho_0 dp$, which is negative by assumption. 
From \eqref{intS} and
$$\frac{W(ck,\omega)}{W(ck,\omega)+1}\leq \min(1,e^{V_\omega(ck)})$$
we obtain that, for all large $n$, 
\begin{align*}
  \pr^a_0[B(n,ck)]
    &\leq n\int e^{V_\omega(ck)}\ind[V_\omega(ck)\leq ck\mu/2 ] P(d\omega)+nP(V_\omega(ck)> ck\mu/2)\\
    &\leq  n(e^{ck\mu/2}+e^{-\alpha ck})\leq e^{-\alpha_{0}ck}
\end{align*}
with $\alpha,\alpha_0$ depending on $p$ only, where the 
second inequality is due to Hoefding's inequallity.
Recalling that $k\sim A\log n$ concludes the proof.
\end{proof}
\section{A reduction to block estimates, large deviations, and proof of Theorem \ref{mainteo}}
  \label{sec-red}
  In this section we reduce the Erd\"{o}s-Renyi problem to a block estimate, and state a large deviations estimate for the block. The proof of both these estimates  
  is technical and will be provided in subsequent sections. We then show how the
  block estimates yield the proof of Theorem \ref{mainteo}.

  For $k$ integer and $c,x\in \mathbb{R}_+$, set
\begin{equation}
\label{fgdef}f(\omega,x,k)=\pr^{\hat{\omega}}_1[ \tau_{xk}> k]\quad
\mbox{ and } \quad g(\omega,x,c,k)=S(xk,\omega)/S(ck,\omega),
\end{equation}
where $\hat\omega$ is as in \eqref{eq-hatomega}. (Here and in the sequel,
we abuse notation by writing $xk$ and $ck$ instead of $\lfloor xk\rfloor$
or $\lfloor ck\rfloor$, as appropriate.)
For $\eta\in M_1^s(\Sigma)$, set
\begin{align}
\label{eq-chi}
\chi  (k,x,c,\eta)&:=\int \Big( \frac{1-f(\omega,x,k)}{1-f(\omega,x,k)(1-g(\omega,x,c,k))}\Big)\eta(d\omega).
\end{align}
When $\eta=P$, we omit $\eta$ from the notation and write $\chi(k,x,c)$ for $\chi  (k,x,c,\eta)$. 


  \subsection{A block estimate}
  Introduce the notation
\begin{equation}
\label{eq-maxx}
\dot{ X}(k,n):=\max \limits_{1\leq t\leq n-k}\frac{X_{t+k}-X_{t}}{k}.
\end{equation}
  The main result of this subsection, whose proof is postponed to
  Section \ref{sec-block}, is the following lemma. Recall the asymptotic velocity
  $v_p$, see \eqref{eq-vp}.
\begin{lemma}\label{mainlemma} 
  Fix $A>0$ and set
   $k=k(n)\sim A\log n$ integer. Then, there exists a constant
   $\alpha_1>0$ so that for any $c>x>0$, 
   and all $n$ large enough,
%
\begin{equation}\label{Pbound1}\pr_{0}^{a}\Big[\dot{X}(k,\tau_{\lfloor v_pn\rfloor})\geq x\Big]\leq n\chi  (k,x,c)+n^{1-\alpha_1Ac},
\end{equation}
and 
\begin{equation}\label{Pbound2}
\pr_{0}^{a}\Big[\dot{X}(k,\tau_{v_pn})
< x\Big]\leq \exp \Big(-\big(\lfloor\frac{v_p n-k}{ck}\rfloor-1\big) 
\chi  (k,x,c)\Big)+n^{1-\alpha_1Ac}.
\end{equation}
\end{lemma}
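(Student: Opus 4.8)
\textbf{Proof proposal for Lemma \ref{mainlemma}.}

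The plan is to cover the spatial interval $[1,\lfloor v_pn\rfloor]$ (or a suitable deterministic number of its subintervals) by consecutive blocks of length $ck$, and to express the event that some temporal $k$-window achieves average speed $\geq x$ in terms of a block-by-block search for a ``fast segment'' of the conditioned-on-no-backtracking walk. Concretely, fix the reference endpoint $\tau_{\lfloor v_pn\rfloor}$ so that by the LLN \eqref{eq-LLNconv} we have hit position $\lfloor v_pn\rfloor$ by time $\sim n$, and first use Lemma \ref{LemmaApendix} to remove, at cost $n^{1-\alpha_0cA}$, the backtracking event $B(n,ck)$: on its complement the walk, once it leaves a block of length $ck$ to the right, never returns to the left end of that block. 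This is what lets us pass from the true quenched walk $\pr_0^\omega$ to the $h$-transformed walk $\pr_1^{\hat\omega}$ inside each block --- the conditioning on $\tau_0=\infty$ from \eqref{eq-hatomega} is, on the no-backtracking event, essentially free --- and thereby to the functional $f(\omega,x,k)=\pr_1^{\hat\omega}[\tau_{xk}>k]$ of \eqref{fgdef}: $1-f$ is the probability of crossing a length-$xk$ stretch in at most $k$ steps, i.e. of a fast segment.

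For the upper bound \eqref{Pbound1}: if $\dot X(k,\tau_{\lfloor v_pn\rfloor})\geq x$ then some $k$-window $[t,t+k]$ has $X_{t+k}-X_t\geq xk$, so the walk crosses some spatial interval of length $xk$ within $k$ steps; such an interval is contained in (at most two consecutive of) the $\lfloor v_pn/(ck)\rfloor+1\leq n$ blocks of length $ck$, so a union bound over $\leq n$ starting blocks reduces matters to bounding, for one block, the annealed probability that the conditioned walk has a fast segment starting somewhere in that block. Here one has to account for the walk possibly visiting the block several times; the geometric-series structure of $\chi$ in \eqref{eq-chi} --- with denominator $1-f(1-g)$ and $g=S(xk,\omega)/S(ck,\omega)$ the probability of escaping the block to the right before returning, via \eqref{lemmaS} --- is exactly the bookkeeping for ``on each visit, either make a fast segment (prob.\ $1-f$) or fail and possibly come back (the $f(1-g)$ term).'' So the per-block annealed fast-segment probability is $\leq \chi(k,x,c)$ (possibly up to an additional $n^{1-\alpha_1Ac}$ error absorbing the difference between $\pr_0^\omega$ and $\pr_1^{\hat\omega}$ and the two-block overlap), and summing over $\leq n$ blocks gives \eqref{Pbound1}.

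For the lower bound \eqref{Pbound2}: select $\lfloor(v_pn-k)/(ck)\rfloor$ disjoint blocks of length $ck$; the complementary event $\dot X(k,\tau_{v_pn})<x$ forces \emph{none} of these blocks to contain a fast segment of the walk. The blocks are not independent, but again on the complement of $B(n,ck)$ the walk visits them in order and what happens in one block, given the environment, depends on adjacent blocks only weakly; the honest way to get independence is to condition on the environment and note that for distinct blocks the events ``no fast segment in block $j$'' involve the walk's behaviour during disjoint time intervals (the successive sojourns), and under the quenched law these are --- once we use the strong Markov property at the first hitting time of each block and the no-backtracking property --- conditionally independent. Taking expectations over the environment, and using that each block contributes a factor $\leq 1-\chi(k,x,c)$ (one block, one environment, probability of \emph{some} fast segment during the sojourns in it is the $\chi$ quantity), we get a product bound $(1-\chi)^{\#\text{blocks}}\leq e^{-(\#\text{blocks})\chi}$, plus the $n^{1-\alpha_1Ac}$ from discarding $B(n,ck)$ and from the $v_pn$ vs.\ actual-position discrepancy. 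The ``$-1$'' in the exponent of \eqref{Pbound2} is the standard one-block loss from the first/last incomplete block.

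The main obstacle, in both directions, is the passage between the true walk and the $h$-transformed walk inside a block while controlling the multiple-visit structure uniformly in the (random) environment of neighbouring blocks: one must show that the probability of a fast segment on a given visit to a block, \emph{conditioned} on everything outside, is sandwiched between expressions governed by $\hat\omega$ restricted to that block, with errors that are summable to $n^{1-\alpha_1Ac}$. This is where the functionals $W,\xi_n,\bar\xi$ of \eqref{defW}--\eqref{defxibar} and the bound $\xi_n(i,\omega)\leq\bar\xi(i,\theta^{n-i}\omega)$ enter: $W$ and $\bar\xi$ give deterministic (ellipticity-based) control, via Lemma \ref{Wbound}, on return probabilities and on the effect of the right-neighbouring environment, so that $\hat\omega^L$ of \eqref{eq-TL} --- the environment seen at distance $L$ into a block --- can be substituted for $\hat\omega$ up to an error exponentially small in $L\sim ck$. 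Making this decoupling quantitative, and checking that the two-sided block count is the deterministic $\lfloor v_pn/(ck)\rfloor$ up to a negligible correction coming from the LLN, is the bulk of the work deferred to Section \ref{sec-block}.
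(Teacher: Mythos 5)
Your high-level plan matches the paper: remove the backtracking event $B(n,ck)$ via Lemma~\ref{LemmaApendix}, represent the probability of a fast segment at a given location as a geometric series over successive crossing attempts (with crossing probability $g$ controlling escape), and for the lower bound use block independence and $(1-\chi)^N\leq e^{-N\chi}$. You also correctly read off the meaning of the $\frac{1-f}{1-f(1-g)}$ structure. However, there are two substantive issues.

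First, your accounting for the upper bound is off. The paper does \emph{not} do a union bound over $\lfloor v_pn/(ck)\rfloor$ blocks and then estimate a per-block fast-segment probability; it does a union bound over the $\sim n$ starting \emph{positions} $y\in[-ck,v_pn-xk]$, and for each $y$ shows (via the recursion over $\bar\tau_0(i)$ and the Markov property) that the quenched probability of ever crossing $[y,y+xk]$ in $\leq k$ steps before leaving $[y,y+ck]$ is exactly $\frac{1-f}{1-f(1-g)}$ evaluated at $\theta^y\omega$. Stationarity of $P$ then gives $n\chi$. Your ``per-block'' formulation would require an extra argument to recover the right prefactor; $\chi$ is intrinsically a per-position, not per-block, quantity. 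Relatedly, the passage to $\pr_1^{\hat\omega}$ is not ``essentially free on the non-backtracking event''; the paper uses the \emph{exact} identity $\pr_1^{\hat\omega}[\tau_{xk}\geq k]=\pr_1^\omega[\tau_{xk}\geq k\mid\tau_0>\tau_{xk}]$, obtained by letting $N\to\infty$ in $\pr_1^\omega[\cdot\mid\tau_0>\tau_N]$. The non-backtracking event serves a different purpose: it lets you replace the stopping time $\tau_{v_pn-y}$ by $\tau_{ck}$ in the recursion, so the denominator involves only $\pr^\omega_{xk}[\tau_0<\tau_{ck}]$ and hence only the environment in the length-$ck$ block.

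Second, you defer ``the bulk of the work'' and assert it relies on $W$, $\xi_n$, $\bar\xi$, $\hat\omega^L$ and Lemma~\ref{Wbound}. That is wrong: none of those enter the proof of this lemma. The paper's proof of Lemma~\ref{mainlemma} is a self-contained elementary computation --- the geometric-series resummation, the identity above, the algebraic cancellation in \eqref{p1frac}--\eqref{upperB1} (which eliminates the $\omega_0$ factors), and finally the factorization over disjoint environment blocks in \eqref{prodprob3}. The apparatus you cite belongs to Sections~\ref{sec-lem13} and~\ref{sec-varprob} (Lemmas~\ref{lemma2} and~\ref{limitteo}), which analyze $\chi$ asymptotically; it plays no role in deriving the inequalities \eqref{Pbound1}--\eqref{Pbound2} themselves. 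Because you misidentify the deferred work and do not carry out the geometric-series computation that produces the exact form of $\chi$, the proposal as written has a genuine gap at its center.
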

\noindent Note that the statement of the lemma is trivial if $x>1$, for then 
$f(\omega,x,k)=1$ and $\chi(k,x,c,\eta)=0$, as  expected.

In the rest of the paper, we always take $c>c_0$ where $\alpha_iAc_0>2$, $i=0,1$. This ensures that the error terms in the right hand side of \eqref{Pbound1} and \eqref{Pbound2}, and also of \eqref{apendix2}, are summable.
We also recall our convention to write for brevity 
$ck$, $xk$ instead of $\lfloor ck\rfloor$,
$\lfloor xk\rfloor$. 
\subsection{A logarithmic estimate for $\chi$}
The following  result, which gives a representation 
of $\chi$ from Lemma \ref{mainlemma}
in terms of the function $I^*$,
is a crucial ingredient in the proof of
Theorem \ref{mainteo}. Its proof is technically involved and postponed to Section
\ref{sec-varprob}.
\begin{prop}\label{limitteo}
  Under the assumptions of Theorem \ref{mainteo}, and $c>c_0$, 
  we have that
\[
\lim \limits_{k\to \infty} -\frac{1}{k}\log\chi  (k,x,c)=I^*(x),
\]
where $I^*$ is as in \eqref{eq-Is}.
\end{prop}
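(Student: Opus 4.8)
The plan is to establish the limit $\lim_{k\to\infty} -\frac1k \log \chi(k,x,c) = I^*(x)$ by analyzing separately the asymptotics of the integrand in \eqref{eq-chi} and then passing the logarithmic asymptotics through the $P$-integral via a Varadhan-type / large-deviations argument on the empirical field $R_{ck}$. First I would observe that since all quantities are bounded (by ellipticity and Lemma \ref{Wbound}, the ratio in the integrand lies in $[0,1]$), the decay of $\chi(k,x,c)$ is governed by the exponential rate at which the integrand is small. The key heuristic is that $1-f(\omega,x,k) = \pr_1^{\hat\omega}[\tau_{xk}\le k]$ is the quenched probability of a fast crossing of a block of length $xk$ in time $k$, which by the quenched large deviation principle for hitting times (the rate function $I^F(x,\eta)$ of \eqref{eq-IL}, interpreted via Appendix A) behaves like $e^{-xk \int \log\phi(\hat\omega^L,\lambda)\,\eta(d\omega)}$-type expressions; meanwhile $g(\omega,x,c,k)=S(xk,\omega)/S(ck,\omega)$ is exponentially small in $k$ (of order $e^{-(\text{something})k}$ governed by $V$), so the denominator $1-f(1-g)$ is, up to the relevant exponential scale, comparable to $1-f+fg$, and one checks that the $fg$ term does not dominate — effectively the integrand is comparable to $(1-f(\omega,x,k))$ up to subexponential corrections, OR the $g$ term contributes and must be tracked. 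I would make this precise with two-sided bounds: $1-f \le \frac{1-f}{1-f(1-g)} \le \frac{1-f}{g}$ (using $1-f(1-g)\ge g$ when... ) — more carefully, $1-f(1-g) \ge 1-(1-g) = g$ always, and $1-f(1-g)\le 1$, giving $1-f \le \text{integrand} \le (1-f)/g$.

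Second, I would handle the $P$-integral. Write $1-f(\omega,x,k)$ and $g(\omega,x,c,k)$ as exponential functionals of the environment restricted to the block $[1,ck]$ (note $\hat\omega$ restricted to $[1,xk]$ depends, through $S(i,\omega)$, on the environment to the left, so one uses the truncated version $\hat\omega^L$ and the monotonicity/approximation of Lemma \ref{lemma2} to reduce to a local functional at the cost of a controlled error as $L\to\infty$). Then $\chi(k,x,c) = \int (\text{local functional of } \theta^j\omega, 0\le j < ck)\, P(d\omega)$, and I would apply the large deviation principle for the empirical field $R_{ck}$ under $P$ with rate $ck\, h(\eta|P)$ together with a Varadhan-type lemma: the exponential functional is, to leading order, $\exp(-ck \cdot [\text{continuous functional of } R_{ck}])$ where the functional evaluated at $\eta$ is (by the quenched hitting-time LDP applied to the conditioned walk) $\frac{x}{c} I^F(x,\eta)/$(appropriate normalization) — matching the $\frac{1}{c}(I^F(x,\eta) + x h(\eta|P))$ structure after rescaling, so that $-\frac1k\log\chi \to \inf_{\eta\in M_1^s(\Sigma_p)} \{I^F(x,\eta) + x h(\eta|P)\} = I^*(x)$ per \eqref{eq-Is}. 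The upper bound on $\chi$ (lower bound on the rate) uses Varadhan's lemma in the form of the Laplace upper bound plus the integrand upper bound $\le (1-f)/g$, checking the extra $1/g$ factor only perturbs the rate by a term that is absorbed or vanishes (here $c > c_0$ large is used so that the $g$ factor, i.e. the $\log S(ck)$ term, dominates favorably); the lower bound uses a change of measure to a near-optimal ergodic $\eta^*$, the integrand lower bound $\ge 1-f$, and the quenched LDP lower bound for the fast crossing under $\eta^*$.

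The main obstacle, I expect, is the identification of the functional: showing rigorously that $-\frac1{xk}\log(1-f(\omega,x,k))$ concentrates, under $R_{ck}\approx\eta$, around the quenched rate $I^F(x,\eta)$-related quantity, given that (a) $f$ is defined via the \emph{non-local} environment $\hat\omega$ rather than $\hat\omega^L$, requiring the approximation scheme of Lemma \ref{lemma2} and Section \ref{sec-lem13} to swap $\hat\omega$ for $\hat\omega^L$ uniformly enough, and (b) the quenched hitting-time LDP holds for ergodic environments but $R_{ck}$ is only an empirical (hence non-ergodic, non-stationary) measure, so one must interpolate — typically by conditioning on $R_{ck}$ being close to a fixed ergodic $\eta$ and invoking a uniform (over a compact family of environments) version of the quenched estimate, or by a subadditivity argument directly on $\log(1-f(\omega,x,k))$ along the block. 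A secondary technical point is controlling the interaction between the numerator $1-f$ and the $g$ in the denominator so that the two exponential scales separate cleanly; this is where the hypothesis $c>c_0$ and the ellipticity bounds on $S$ and $W$ (Lemmas \ref{Wbound}, \ref{LemmaApendix}) do the work. I would organize the proof as: (i) reduce integrand to $1-f$ up to a factor controlled by $g$; (ii) localize $f$ via $\hat\omega^L$ and Lemma \ref{lemma2}; (iii) establish the quenched concentration of $-\frac1k\log(1-f)$ given $R_{ck}$; (iv) combine with the $R_{ck}$-LDP and Varadhan to get matching upper and lower bounds equal to $I^*(x)$.
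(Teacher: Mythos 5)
Your skeleton (reduce the integrand to a rate-function form, localize $\hat\omega$ via $\hat\omega^L$, then pass to a variational problem by a Varadhan argument) matches the paper's approach, but two steps would fail as written. First, you propose to apply Varadhan's lemma directly to the single empirical field $R_{ck}$, treating the exponential rate of the integrand as a continuous functional of $R_{ck}$. For the $f$-part this is fine: the functional $\hat I^L_0(x,k,\omega)$ is by construction $\frac1k$ times a Ces\`aro sum of $\log\hat\phi_L(\lambda,\theta^i\omega)$, hence a weakly continuous functional of $R_{xk}$. But the $g$-part is not: by \eqref{eqS}, $g=S(xk,\omega)/S(ck,\omega)$ is governed by the \emph{running maximum} of $V_\omega(\cdot)$ on $[0,ck)$, and $\max_j V_\omega(j)$ is not a weakly continuous functional of the single empirical field, because the weak topology does not see the ordering of the partial sums. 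This is precisely why the paper introduces the $\e$-partitioning of Definition~\ref{partition} (approximating $\max_j V(j)$ by a max over $\e k$-block sums, each a continuous functional of the per-block fields), applies the vector LDP of Lemma~\ref{blocksLDP}, obtains the $\e$-dependent two-sided bounds $I^*_l(x,c)$, $I^*_u(x,c)$ of Lemma~\ref{lemmaLU}, and only matches them at the very end in the limit $\e\to 0$, using affineness of specific relative entropy to show that \eqref{INF-1} does not depend on $\e$.

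Second, you attribute the harmlessness of the $1/g$ factor in your upper bound to the choice $c>c_0$; this is the wrong mechanism. The constant $c_0$ is used only to make the error terms $n^{1-\alpha_i Ac}$ in Lemmas~\ref{LemmaApendix} and~\ref{mainlemma} summable, and plays no role in the rate computation. What actually eliminates the $g$-contribution is the ballistic hypothesis $s>1$: Lemma~\ref{lemmamins} shows $\inf_{z>0}I_m(z)/z=s$, and Lemma~\ref{IFlowerlemma} then shows that the specific-entropy cost of producing $\max V \sim yk$ on $[xk,ck)$ (equivalently, making $g$ exponentially small at rate $y$) is $I^F_c(y)\ge sy$, strictly larger than the gain $y$ in the exponent. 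This is what forces the $y$-variable to vanish in the variational problems \eqref{ILfinal}, \eqref{IUfinal}, so that $g$ disappears and one recovers $I^*(x)$ of \eqref{eq-Is}. Your cruder two-sided bound $1-f\le\text{integrand}\le(1-f)/g$ (which discards the clipping $[\cdot]^-$ the paper retains through $(1+e^{a-b}-e^{-b})^{-1}\le e^{-(b-a)^-}$ in \eqref{upperchi}) does in fact yield the same variational value, but only \emph{because} $s>1$; one still has to prove this through Lemma~\ref{IFlowerlemma}, and without that input the unclipped rate is not a priori the correct one.
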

\subsection{Proof of Theorem \ref{mainteo}}
We now combine Lemma \ref{mainlemma} 
with Proposition~\ref{limitteo} to prove Theorem~\ref{mainteo}. 
Throughout, $k=k(n)$ is as in the statement of the theorem.
\begin{proof}[Proof of Theorem~\ref{mainteo}]
  Fix $\gamma>0$ small. 
  Let $E_0=E_0(\gamma)=\{n\leq \tau_{\lfloor v_p n/(1-\gamma)\rfloor}\}$ and $E_1=E_1(\gamma)=\{n\geq \tau_{\lfloor v_p n/(1+\gamma)\rfloor}\}$. 
By \eqref{eq-LLNconv},
  \begin{equation}
    \label{taulim} \frac{\tau_{\lfloor v_p n\rfloor }}{n} \to 1, \quad \pr^a_0-a.s.,
  \end{equation}
  implying that $E_0,E_1$ occur for all large enough $n$, almost surely under $\pr_0^a$.
From Lemma \ref{mainlemma} (applied with $n/(1-\gamma)$ and $n/(1+\gamma)$)
and the fact that  $k(n)\sim k(n/(1-\gamma))\sim k(n/(1+\gamma))$ we get the following bounds
\[
\pr_{0}^{a}\Big[E_0, \dot{X}(k,n)\geq  x\Big]\leq 
\frac{n}{1-\gamma}\chi  (k,x,c) + n^{1-\alpha_1 Ac},
\]
and 
\[
\pr_{0}^{a}\Big[E_1, \dot{X}(k,n)<x  \Big]\leq \exp \Big(-\big(\lfloor\frac{v_p n/(1+\gamma)-k}{ck}\rfloor-1\big) 
\chi  (k,x,c)\Big) + n^{1-\alpha_1 Ac}.
\]
From Proposition~\ref{limitteo} we obtain that
\begin{equation*}
    \lim \limits_{n\to \infty} \frac{\log \big(n\chi  (k,x,c)\big)}{\log n}= 1-AI^*(x),
\end{equation*}
and therefore
for every $\e>0$ there is a constant $\alpha_{\e}>0$ such that 
\begin{equation}\label{nchiapprox}
  \alpha_{\e}^{-1}n^{1-AI^*(x)-\e} \leq n\chi  (k,x,c)\leq \alpha_{\e}n^{1-AI^*(x)+\e},
\end{equation}
hence, for some constant $c'_\e>0$, 
\begin{align} \label{lowerb}
    \pr_{0}^{a}\Big[E_1,\dot{X}(k,n)<x  \Big]\leq \exp \Big(-\frac{c'_\e}{(1+\gamma)ck}n^{1-AI^*(x)-\e}\Big) + n^{1-\alpha_1 Ac}.
\end{align}
Fix now $x<x^*(A)$ and set
$\e=A(I^*(x^*(A))-I^*(x))/2$. Because $I^*$ is strictly increasing in 
a neighborhood of
$x^*(A)$, we have with $AI^*(x^*(A))=1$ that
$\e>0$ and $1-AI^*(x)-\e>0$, which together with
the 
choice
$c>c_0$, imply that  the right hand side of \eqref{lowerb} is summable. 
Together with
\eqref{taulim}, it follows from the Borel-Cantelli lemma that 
 $\liminf_{n\to\infty} \dot{X}(k,n) \geq x^*(A)$, $\pr^a$-a.s.

For the other bound let $n_j=\max\{n: k(n)=j \}$. Since $k\sim A\log n$, there
exist constants $\alpha_9,\alpha_{10}>0$ such that
for all values of $n$ with $k(n)=j$, we have that
$n\geq \alpha_{9}e^{\alpha_{10}j}$. 
Therefore by \eqref{nchiapprox} and \eqref{Pbound1}, 
we have for any $x> x^*(A)$ and $\e>0$ that
  \begin{align}
      \nonumber\sum \limits_{j=1}^{\infty} \pr_{0}^{a}\Big[E_0,\dot{X}(j,n_j)\geq x  \Big]&\leq \sum \limits_{j=1}^{\infty} \alpha_{\e} n_j^{1-AI^*(x)+\e}+C\\
      \label{assum1}&\leq \sum \limits_{j=1}^{\infty} c''_{\e} e^{\alpha_{10}j(1-AI^*(x)+\e)}+C,
  \end{align}
  where $C$ is some constant coming from the summation of the error term
  in \eqref{Pbound1}.
Taking $\e=A(I^*(x)-I^*(x^*(A)))/2$ and using that $I^*(x)>I^*(x^*(A))=1/A$
makes the sum in \eqref{assum1} finite, therefore
by the Borel-Cantelli lemma for all but a finite number of $j$'s we have that
$\dot{X}(k,n_j)\leq x$, almost surely. For every $n$ there is a $j$ such that $n\leq n_j$ and $k(n)=k(n_j)$, therefore
$\dot{X}(k,n)\leq \dot{X}(j,n_j)$, and hence for all but a finite number of $n$ we have $\dot{X}(k,n)\leq x$. 
Since  $x>x^*(A)$ is arbitrary, we obtain together with
\eqref{taulim} that
$$\limsup_{n\to\infty} \dot{X}(k,n)\leq x^*(A), \quad \pr^a-a.s.,$$
concluding the proof.
\end{proof}
\section{Proof of Lemma \ref{mainlemma}}
\label{sec-block}
We provide in this section the proof of Lemma \ref{mainlemma},
which was used in
the proof of Theorem \ref{mainteo}. 
\begin{proof}[Proof of Lemma \ref{mainlemma}]
 Fix $x,c$ as in the lemma.  It follows from
 Lemma~\ref{LemmaApendix} that
 \begin{equation}
   \label{eq-noBT}
   \pr^a_0[B(n,ck)]\leq n^{1-\alpha_0Ac}.
 \end{equation}

 We now begin the proof of 
 \eqref{Pbound1}. Note that on the complement of $B(n,ck)$,
no backtracking of length $ck$ occurs before the RWRE hits $n$.
For brevity, we write in the proof $v_p n$ for $\lfloor v_p n\rfloor$.
We bound 
\begin{align}
\nonumber
\pr_{0}^{\omega}&\Big[\dot{X}(k,\tau_{v_pn})
\geq x\Big] = \pr_{0}^{\omega}\Big[\bigcup\limits_{t=1}^{{\tau}_{v_p n}-k}\{X_{t+k}-X_{t}\geq xk\}\Big]\\
&\leq \pr_{0}^{\omega}\Big[B(v_p n,ck)^{\complement}\bigcap \big\{\bigcup\limits_{y=-c k}^{v_pn-xk}\bigcup\limits_{t=1}^{{\tau}_{v_p n}-k}\{X_{t+k}-y\geq xk, X_{t}=y\}\big\}\Big]\nonumber\\
&\qquad +\pw[B(v_p n,ck)].\label{eq-1ofer}
\end{align}
Turning to the first term in the right hand side of \eqref{eq-1ofer}, 
recalling the local time $\ell(\cdot,\cdot)$, see \eqref{eq-localtime}, we have
that
\begin{align}\nonumber
  &\pr_{0}^{\omega}\Big[B(v_p n,ck)^{\complement}\bigcap \big\{\bigcup\limits_{y=-c k}^{v_pn-xk}\bigcup\limits_{t=1}^{{\tau}_{v_p n}-k}\{X_{t+k}-y\geq xk, X_{t}=y\}\big\}\Big]\\
\nonumber &\leq\sum\limits_{y=-ck}^{v_pn-xk}\pr_{0}^{\omega}\Big[
B(v_p n,ck)^{\complement}\bigcap \big\{\bigcup\limits_{t=1}^{{\tau}_{v_p n}-k}\{X_{t+k}-y\geq xk, X_{t}=y\}\big\}\Big] \\
\nonumber&\leq\sum\limits_{y=-ck}^{v_pn-xk}\pr_{0}^{\omega}\Big[B(v_p n,ck)^{\complement}
\bigcap\big\{\bigcup\limits_{j=1}^{\ell(y,\tau_{v_p n}-k)}\{X_{\tau_y(j)+k}-y\geq xk\}\big\}\Big]\\
\label{uniprob}&=\sum\limits_{y=-ck}^{v_pn-xk}\pw[\tau_y<\tau_{v_p n}]\pr_{0}^{\theta^y\omega}\Big[B(v_p n,ck)^{\complement}\bigcap \big\{
\bigcup\limits_{j=1}^{\ell(0,\tau_{v_pn-y}-k)}\{X_{\tau_0(j)+k}\geq xk\}\big\}
\Big],
\end{align}
where $\{X_{\tau_0(j)+k}\geq xk\}=\varnothing$ if $\tau_0(j)=\infty$. 
Set $\bar{\tau}_y(0)=0$ and, for $j\geq 0$, define recursively
$\bar{\tau}_y(j)=\inf\{t>t_{y}(j): X_t=y\mbox{ or }X_t=y+xk\}$ and 
$t_y(j)=\inf\{t\geq\bar{\tau}_{y}(j-1): X_t=y\}$.
These are the consecutive attempts for the walk 
to cross the interval $[y,y+xk]$.
We represent 
the event $B(v_p n,ck)^{\complement}
  \bigcap \big\{\bigcup\limits_{j=1}^{\ell(0,\tau_{v_p n-y}-k)}\{X_{\tau_0(j)+k}\geq xk\}\big\}$ in the right hand side of \eqref{uniprob}  as
\begin{align*}
  &B(v_p n,ck)^{\complement}\bigcap \Big\{\bigcup\limits_{i=1}^{\infty}\{\btau_0(i)-t_0(i)\leq k,X_{\btau_0(i)}=xk\}\\
  &\qquad\bigcap_{j=1}^{i-1}\big[\{X_{\btau_0(j)}=0\}
  \bigcup \{\btau_0(j)-t_0(j)> k,X_{\btau_0(j)}=xk, t_0(j+1)<\tau_{v_p n-y}\}
\big]\Big\}
\end{align*} 
which is a subset of
\begin{align*}
  &B(v_p n,ck)^{\complement}
  \bigcap \Big\{\bigcup\limits_{i=1}^{\infty}\{\btau_0(i)-t_0(i)\leq k,X_{\btau_0(i)}=xk\}\\
  &\qquad\bigcap_{j=1}^{i-1}\big[\{X_{\btau_0(j)}=0\}
  \bigcup \{\btau_0(j)-t_0(j)> k,X_{\btau_0(j)}=xk, t_0(j+1)<\tau_{ck}\}\big]
\Big\}
\end{align*}
and therefore, using the Markov property,
\begin{align}
  \label{probDev}
\pr_{0}^{\omega}&\Big[B(v_p n,ck)^{\complement}
\bigcap \big\{\bigcup\limits_{j=1}^{\ell(0,\tau_{v_p n-y}-k)}\{X_{\tau_0(j)+k}\geq xk\}\big\}\Big]\\
\nonumber&\leq \sum\limits_{i=1}^{\infty}\pw [\btau_0(i)-t_0(i)\leq k,X_{\btau_0(i)}=xk]\\
\nonumber& \; \quad \times \big(\pw[\{X_{\btau_0(1)}=0\}\cup 
\{\btau_0(1)-t_0(1)> k,X_{\btau_0(1)}=xk, t_0(2)<\tau_{ck}\}]\big)^{i-1}\\
\nonumber&=\frac{\pw [\btau_0(1)\leq k,X_{\btau_0(1)}=xk]}{1-\pw[X_{\btau_0(1)}=0]-\pw[ \btau_0(1)> k,X_{\btau_0(1)}=xk, t_0(2)<\tau_{ck}]}\\
\nonumber
&=\frac{\pw [\btau_0(1)\leq k,X_{\btau_0(1)}=xk]}{1-\pw[X_{\btau_0(1)}=0]-\pw[ \btau_0(1)> k,X_{\btau_0(1)}=xk]\pr^{\omega}_{xk} [\tau_0<\tau_{ck}]}.
\end{align}
We calculate the  probabilities in the right hand side of \eqref{probDev}  
separately.
\begin{align*}
\pw[ \btau_0(1)> k,X_{\btau_0(1)}=xk]&=\pw[ \tau_{xk}> k,\tau_{0}>\tau_{xk}]\\
&=\omega_0 \pr^{\omega}_1[ \tau_{xk}> k-1,\tau_{0}>\tau_{xk}]\\
&=\omega_0\pr^{\omega}_1[ \tau_{xk}\geq k\mid \tau_{0}>\tau_{xk}]\pr^{\omega}_1[\tau_{0}>\tau_{xk}].
\end{align*}
Recall, see \eqref{eq-hatomega}, that ${\pr}^{\hat \omega}$ is  the law of the random walk 
in the environment $\omega$, conditioned on not hitting the origin. With this it holds by the Markov property
\begin{align*}
    {\pr}^{\hat\omega}_1[ \tau_{xk}\geq k]&={\pr}^{\omega}_1[ \tau_{xk}\geq k\mid \tau_0=\infty]=\lim \limits_{N\to \infty}{\pr}^{\omega}_1[ \tau_{xk}\geq k\mid \tau_0>\tau_N]\\
    &=\lim \limits_{N\to \infty}\frac{{\pr}^{\omega}_1[ \tau_{xk}\geq k\cap \tau_0>\tau_N]}{{\pr}^{\omega}_1[\tau_0>\tau_N]}\\
    &=\lim \limits_{N\to \infty}\frac{{\pr}^{\omega}_1[ \tau_0>\tau_{xk}\geq k] \pr^{\omega}_{xk}[\tau_0>\tau_N]}{{\pr}^{\omega}_1[\tau_0>\tau_{xk}]\pr^{\omega}_{xk}[\tau_0>\tau_N]}\\
    &=\pr^{\omega}_1[ \tau_{xk}\geq k\mid \tau_{0}>\tau_{xk}],
\end{align*}
and thus, 
\[
\pw[ \btau_0(1)> k,X_{\btau_0(1)}=xk]=\omega_0{\pr}^{\hat\omega}_1[ \tau_{xk}\geq k]\pr^{\omega}_1[\tau_{0}>\tau_{xk}].
\]
We also have
\begin{align}
\nonumber\pw[X_{\btau_0(1)}=0]&=\pw[\tau_0<\tau_{xk}]=(1-\omega_0)+\omega_0 \pr^{\omega}_1[\tau_0<\tau_{xk}]\\
&=(1-\omega_0)+\omega_0\Big(1-\frac{ 1}{S(xk,\omega)}\Big)
\label{p1frac}=1-\frac{\omega_0}{S(xk,\omega)},
\end{align}
Using \eqref{p1frac} in \eqref{probDev} we get
\begin{align}
\nonumber&\frac{\pw [\btau_0(1)\leq k,X_{\btau_0(1)}=xk]}{1-\pw[X_{\btau_0(1)}=0]-\pw[ \btau_0(1)> k,X_{\btau_0(1)}=xk]\pr^{\omega}_{xk} [\tau_0<\tau_{ck}]}\\
\nonumber&=\frac{\omega_0\pr^{\hat{\omega}}_1[ \tau_{xk}< k]\pr^{\omega}_1[\tau_{0}>\tau_{xk}]}{1-\pw[X_{\btau_0(1)}=0]-\omega_0\pr^{\hat{\omega}}_1[ \tau_{xk}\geq k]\pr^{\omega}_1[\tau_{0}>\tau_{xk}]\pr^{\omega}_{xk} [\tau_0<\tau_{ck}]}\\
\nonumber&\leq\frac{\omega_0\pr^{\hat{\omega}}_1[ \tau_{xk}< k](S(xk,\omega))^{-1}}{\omega_0(S(xk,\omega))^{-1}-\omega_0\pr^{\hat{\omega}}_1[ \tau_{xk}\geq k](S(xk,\omega))^{-1}\pr^{\omega}_{xk} [\tau_0<\tau_{ck}]}\\
\label{upperB1}&=\frac{\pr^{\hat{\omega}}_1[ \tau_{xk}< k]}{1-\pr^{\hat{\omega}}_1[ \tau_{xk}\geq k]\pr^{\omega}_{xk} [\tau_0<\tau_{ck}]}.
\end{align}

Using this in \eqref{eq-1ofer}--\eqref{probDev}
we get
\begin{align}
\nonumber\pr_{0}^{\omega}&\Big[\dot{X}(k,\tau_{v_pn})\geq x\Big]\\
\label{upperB2}&\leq \sum\limits_{y=-c k}^{v_pn-xk}\frac{\pr^{\widehat{\theta^y\omega}}_1[ \tau_{xk}< k]}{1-\pr^{\widehat{\theta^y\omega}}_1[ \tau_{xk}\geq k]\pr^{\theta^y\omega}_{xk} [\tau_0<\tau_{ck}]}+\pw[B(v_p n,ck)].
\end{align}

Taking expectations with respect to $P=p^{\mathbb Z}$ and using stationarity,
we obtain
\begin{align}
\nonumber \pr_{0}^{a}&\Big[\dot{X}(k,\tau_{v_pn})\geq x\Big]\\
\label{upperB2S}&\leq n \int \Big( \frac{1-f(\omega,x,k)}{1-f(\omega,x,k)(1-g(\omega,x,c,k))}\Big) P(d\omega)+\pr^a_0[B(v_p n,ck)].
\end{align}
Together with \eqref{eq-noBT},
this concludes the proof of \eqref{Pbound1}. 

We turn to the proof of \eqref{Pbound2}.
Recall that $k\sim A\log n$ and that we write $ck$ for $\lfloor ck\rfloor$.
Split the interval $[1, v_pn]$ into
blocks of size approximately $ck$. Let $z_{i} =  i ck$ for $0 \leq i \leq \lfloor(v_pn -  k)/ ck\rfloor$. Denote the
collection of visit times to the points $\{z_i\}$ by
$T = \{\tau_{z_{i}}(j)\}_{1\leq i\leq\lfloor (v_p n-k)/ ck\rfloor,j\in \mathbb{Z}+}$. We have
\begin{align}
\nonumber\pr_{0}^{\omega}\Big[\dot{X}(k,\tau_{v_pn})<x\Big] &=\pr_{0}^{\omega}\Big[\bigcap_{t=1}^{\tau_{v_p n}-k}\{X_{t+k}-X_{t}<xk\}\Big]\\
\nonumber&\leq \pr_{0}^{\omega}\Big[\bigcap_{t\in T\cap [0,\tau_{v_p n}-k]}\{X_{t+k}-X_{t}<xk\}\Big]\\
\label{doublecap}&=\pr_{0}^{\omega}\Big[\bigcap \limits_{i=1}^{\lfloor\frac{v_p n-k}{ ck }\rfloor}\bigcap \limits _{j=1}^{\ell(z_{i},\tau_{v_p n}-k)}\{X_{\tau_{z_{i}}(j)+k}-z_{i}<xk\}\Big].
\end{align}
The event $\{X_{\tau_{z_{i}}(j)+k}-z_{i}<xk\}$ only depends on the environment 
in the sites $[z_{i}-k,z_{i}+k]$. Even though we are considering disjoint blocks of the environment, there is still dependence on the local times. To induce the independence we will make use of another event, first  define $\tilde{\tau}_i=\inf\{t> \tau_{z_{i+1}}: X_t=z_{i} \}$ and
\[
\bar{B}_n(c,k)=\bigcap \limits_{i=1}^{\lfloor\frac{v_p n-k}{ ck }\rfloor-1} \{\tilde{\tau}_{i-1}>\tau_{z_{i+1}} \}.
\]
Then from \eqref{doublecap} and the Markov property we have
\begin{align}
\nonumber \pr_{0}^{\omega}&\Big[\bigcap \limits_{i=1}^{\lfloor\frac{v_p n-k}{ ck}\rfloor}\bigcap \limits _{j=1}^{\ell(z_{i},\tau_{v_p n}-k)}\{X_{\tau_{z_{i}}(j)+k}-z_{i}<xk\}\Big]\\
\nonumber
&= \prod_{i=1}^{\lfloor\frac{v_p n-k}{ck}\rfloor-1}\pr_{0}^{\omega}\Big[\bigcap_{j=1}^{\ell(z_{i},\tau_{z_{i+1}}-k)}\{X_{\tau_{z_{i}}(j)+k}-z_{i}<xk\}\Big]+\pr_{0}^{\omega}[\bar{B}_n(c,k)^{\complement}]\\
&\leq 
\prod_{i=1}^{\lfloor\frac{v_p n-k}{ck}\rfloor-1}\pr_{0}^{\omega}\Big[\bigcap_{j=1}^{\ell(z_{i},\tau_{z_{i+1}}-k)}\{X_{\tau_{z_{i}}(j)+k}-z_{i}<xk\}\Big]+\pr_{0}^{\omega}[B(v_p n,ck)],
\label{prodprob}
\end{align}
where we used that
$\bar{B}_n(c,k)\supset B(v_p n,ck)^\complement$.
We next control the main term in \eqref{prodprob}:
\begin{align*}
\pr_{0}^{\omega}\Big[\bigcap_{j=1}^{\ell(z_{i},\tau_{z_{i+1}}-k)}\{X_{\tau_{z_{i}}(j)+k}-z_{i}<xk\}\Big]&=\pr_{0}^{\theta^{z_i}\omega}\Big[\bigcap_{j=1}^{\ell(0,\tau_{z_1}-k)}\{X_{\tau_{0}(j)+k}<xk\}\Big].
\end{align*}
Using analogous computations as in \eqref{probDev} we obtain
\begin{align*}
\pr_{0}^{\omega}&\Big[\bigcap_{j=1}^{\ell(0,\tau_{z_1}-k)}\{X_{\tau_{j}(0)+k}<xk\}\Big]\\
&=1-\frac{\pw [\btau_0(1)\leq k,X_{\btau_0(1)}=xk]}{1-\pw[X_{\btau_0(1)}=0]-\pw[ \btau_0(1)> k,X_{\btau_0(1)}=xk]\pr^{\omega}_{xk} [\tau_0<\tau_{z_1}]}\\
&=1-\frac{\pr^{\hat{\omega}}_1[ \tau_{xk}\leq k]}{1-\pr^{\hat{\omega}}_1[ \tau_{xk}> k](1-S(xk,\omega)/S(ck,\omega))}\\
&=\frac{\pr^{\hat{\omega}}_1[ \tau_{xk}> k]S(xk,\omega)/S(ck,\omega)}{1-\pr^{\hat{\omega}}_1[ \tau_{xk}> k](1-S(xk,\omega)/S(ck,\omega))}.
\end{align*}
Now going back to the product in \eqref{prodprob} we have
\begin{align}
\nonumber\prod_{i=1}^{\lfloor\frac{v_p n-k}{ ck}\rfloor-1}&\pr_{0}^{\omega}\Big[\bigcap_{j=1}^{\ell(z_{i},\tau_{z_{i+1}}-k)}\{X_{\tau_{z_{i}}(j)+k}-z_{i}<xk\}\Big]\\
\nonumber &=\prod_{i=1}^{\lfloor\frac{v_p n-k}{ ck}\rfloor-1} \pr_{0}^{\theta^{z_i}\omega}\Big[\bigcap_{j=1}^{\ell(0,\tau_{z_{1}}-k)}\{X_{\tau_{0}(j)+k}<xk\}\Big]\\
\nonumber &=\prod_{i=1}^{\lfloor\frac{v_p n-k}{ ck}\rfloor-1}\frac{\pr^{\hat{\omega}}_1[ \tau_{xk}> k]S(xk,\omega)/S(ck,\omega)}{1-\pr^{\hat{\omega}}_1[ \tau_{xk}> k](1-S(xk,\omega)/S(ck,\omega))}\\
\label{prodprob3}&=\prod_{i=1}^{\lfloor\frac{v_p n-k}{ ck}\rfloor-1}\frac{f(\theta^{z_i}\omega,x,k) g(\theta^{z_i}\omega,x,k)}{1-f(\theta^{z_i}\omega,x,k)(1- g(\theta^{z_i}\omega,x,c,k))}.
\end{align}
The terms in the last product  of \eqref{prodprob3} are independent since
they depend on disjoint subsets of the environment. Moreover, by stationarity
they are identically distributed.
Integrating \eqref{prodprob3} with respect to $P$ we obtain
\begin{align*}
\int& \Big(\prod_{i=1}^{\lfloor\frac{v_p n-k}{ck}\rfloor-1}\frac{f(\theta^{z_i}\omega,x,k) g(\theta^{z_i}\omega,x,c,k)}{1-f(\theta^{z_i}\omega,x,k)(1- g(\theta^{z_i}\omega,x,c,k))}\Big) P(d\omega)\\
 &=\Big(\int\frac{f(\omega,x,k) g(\omega,x,c,k)}{1-f(\omega,x,k)(1- g(\omega,x,c,k))}P(d\omega)\Big)^{\lfloor\frac{v_p n-k}{ck}\rfloor-1}\\
 &=\Big(1-\int\frac{1-f(\omega,x,k)}{1-f(\omega,x,k)(1- g(\omega,x,c,k))}P(d\omega)\Big)^{\lfloor\frac{v_p n-k}{ ck}\rfloor-1}\\
&\leq \exp \Big(-{\big(\lfloor\frac{v_p n-k}{ ck}\rfloor-1\big)} \int\frac{1-f(\omega,x,k)}{1-f(\omega,x,k)(1- g(\omega,x,c,k))}P(d\omega) \Big).
\end{align*}
Taking expectations in \eqref{prodprob} and using the last estimate together
with \eqref{eq-noBT}, this yields \eqref{Pbound2}.
\end{proof}
\section{Environment partitioning, approximations, and proof of Lemma \ref{lemma2}}
\label{sec-lem13}
We introduce a partitioning of the interval 
$[0,ck-1]$ that will be useful when controlling
maxima of the potential using empirical fields. 
We then introduce approximations of various rate functions, and then 
provide the proof of Lemma \ref{lemma2}.

\subsection{Environment partitioning, basic LDP, and reverse environment}
We begin by introducing a partition of the environment into blocks.
\begin{df}[$\e$-partitioning]\label{partition} Choose $\e>0$ small  
  enough so that
  $x/\e$ is an integer. Divide the interval $[0,xk-1]\cap \mathbb{Z}$ into
  disjoint intervals $I_1,I_2,\ldots I_{x/\e}$ of approximate length $\e k$ 
  in the most even way possible, so for every $i,j$ we have $||I_i|-|I_j||\leq 1$. Define the intervals $\bar{I}_1,\bar{I}_2,\ldots, \bar{I}_{\lfloor(c-x)/\e\rfloor}$ in the same way as a partitioning for the interval $[xk,cx-1]\cap \mathbb{Z}$ (observe that since we are using the same value of $\e$ we cannot assure that $(c-x)/\e$ is an integer). For every interval we define its empirical field
\[
R_{i,\e}:=\frac{1}{|I_i|} \sum\limits_{j\in I_i} \delta_{\theta^j\omega} \quad \mbox{and}\quad \bar{R}_{i,\e}:=\frac{1}{|\bar{I}_i|} \sum\limits_{j\in \bar{I}_i} \delta_{\theta^j\omega}.
\]
\end{df}
Define for $m<n$
\[
R_n^m:=\frac{1}{n-m}\sum \limits_{j=m}^{n-1} \delta_{\theta^j\omega}.
\]
The next standard lemma exploits the product structure of $P=p^\mathbb{Z}$ to show a joint LDP for appropriate  vectors of empirical processes $(R_{n_i}^{m_i})_{i=1}^B$. We ommit the straightforward proof.
\begin{lemma}\label{blocksLDP} 
For any constants $0=s_0<s_1<s_2<\ldots<s_B=1$ in $[0,1]$, the vector of empirical fields 
 $(R_{s_1 n},R_{s_2 n}^{s_1 n},\ldots, R_{n}^{s_{B-1} n})$  satisfies, under $P=p^\mathbb{Z}$,
  a large deviation principle in $M_1(\Sigma)^{B}$, equipped with the product topology, with the rate function
\begin{align}
    I_{R}(\eta_1,\ldots,\eta_B):=\sum \limits_{i=1}^B (s_i-s_{i-1})h(\eta_i|P).
\end{align}
\end{lemma}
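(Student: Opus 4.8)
The statement to prove is Lemma \ref{blocksLDP}: a joint LDP for the vector of empirical fields $(R_{s_1 n}, R_{s_2 n}^{s_1 n}, \ldots, R_n^{s_{B-1}n})$ under the product measure $P = p^{\mathbb Z}$, with rate function $\sum_{i=1}^B (s_i - s_{i-1}) h(\eta_i \mid P)$.

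\begin{proof}[Proof of Lemma \ref{blocksLDP}]
The plan is to reduce the joint statement to the single-block LDP by exploiting the product structure of $P$. First I would note that the $B$ index sets $\{\lfloor s_{i-1}n\rfloor, \ldots, \lfloor s_i n\rfloor - 1\}$ are disjoint, and since $P = p^{\mathbb Z}$, the restrictions of $\omega$ to these (disjoint, but non-adjacent-relevant) blocks are independent; more precisely, the coordinate $R_{s_i n}^{s_{i-1}n}$ is a function of $\{\omega_j : s_{i-1}n \le j \le s_i n - 1 + M\}$ for arbitrarily large fixed window $M$ needed to read off shifts $\theta^j\omega$, but since each $R_{s_i n}^{s_{i-1}n}$ uses order $n$ coordinates and only an order-$1$ overlap at the boundary, one can either (a) work with cylinder test functions and absorb the boundary overlap as a negligible correction, or (b) note that $R_{s_i n}^{s_{i-1}n}$ depends, up to an error vanishing in the LDP scale, only on $(\omega_j)_{s_{i-1}n \le j < s_i n}$. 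Along the lines of the standard proof of Sanov-type theorems for empirical fields (see e.g.\ \cite{DZ}), each marginal $R_{s_i n}^{s_{i-1}n}$ satisfies, under $P$, an LDP at speed $(s_i - s_{i-1})n$ with rate function $h(\cdot \mid P)$, by the process-level Sanov theorem applied to the i.i.d.\ sequence restricted to block $i$ (after shifting the block to start at $0$, which does not change the law since $p$ is a product).

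The key step is then to combine these $B$ independent LDPs into a joint one. Since the blocks $\{s_{i-1}n, \ldots, s_i n - 1\}$ are disjoint and $P$ is a product measure, the $B$ random variables $R_{s_i n}^{s_{i-1}n}$, $i = 1, \ldots, B$, are (asymptotically, up to the harmless boundary overlap) independent. I would invoke the general principle that a finite family of independent random variables each satisfying an LDP at speeds $a_i^{(n)} = (s_i - s_{i-1})n$ — all of the same order $n$ — satisfies a joint LDP at speed $n$ with rate function equal to the sum of the individually-rescaled rate functions: here $\sum_{i=1}^B (s_i - s_{i-1}) h(\eta_i \mid P)$. Concretely one checks the upper bound via a union bound over a cover of $M_1(\Sigma)^B$ by products of sets, using independence to factor probabilities, and the lower bound similarly by restricting to product neighborhoods and factoring; goodness of the rate function $h(\cdot\mid P)$ (lower semicontinuity and compact level sets, inherited from the single-block case) and the fact that $M_1(\Sigma)$ is compact (since $\Sigma = [\kappa,1-\kappa]^{\mathbb Z}$ is compact) make both bounds routine and the rate function automatically good.

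The main obstacle — really the only non-cosmetic point — is the boundary overlap: reading off $\theta^j\omega$ for $j$ near the right end $s_i n - 1$ of block $i$ requires coordinates that spill into block $i+1$, so the coordinates $R_{s_i n}^{s_{i-1}n}$ are not exactly functions of disjoint coordinate sets. This is handled by the standard device of approximating $R_{s_i n}^{s_{i-1}n}$ by its "periodized" or "truncated" version that only reads coordinates strictly inside block $i$; the difference is a measure supported on an $O(1)$-fraction of the block and hence is negligible in the weak topology at scale $n$, so it does not affect the LDP (exponential equivalence). Once this is dispatched, the independence argument goes through verbatim and yields the claimed rate function, which is why the paper calls the proof straightforward and omits it.
\end{proof}
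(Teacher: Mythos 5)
The paper omits this proof entirely (``We omit the straightforward proof''), so there is no reference argument to compare against; your argument is correct and is the standard one. By stationarity each $R_{s_i n}^{s_{i-1}n}$ has the law of $R_{(s_i-s_{i-1})n}$, hence satisfies the process-level Sanov LDP at speed $(s_i-s_{i-1})n$ with rate $h(\cdot\mid P)$, and since $P=p^{\mathbb Z}$ is a product measure the blocks are (after a boundary truncation) functions of disjoint coordinate sets, so the joint LDP with summed rate follows from tensorization of LDPs for independent variables in the compact space $M_1(\Sigma)^B$. You correctly single out the only real technical point, that $\theta^j\omega$ reads coordinates beyond the block, and your resolution is the right one: for a cylinder test function of window $M$, only $O(M)$ of the $O(n)$ summands in $R_{s_i n}^{s_{i-1}n}$ straddle the boundary, so the truncated version is deterministically within $o(1)$ of the original in the metrizable weak topology, giving (trivial) exponential equivalence. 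One phrase to tighten: ``a measure supported on an $O(1)$-fraction of the block'' should read ``$O(1)$-many summands'' (equivalently an $O(1/n)$-fraction); as written it suggests a constant fraction, which would not vanish.
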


Now we make use of the blocks to estimate the value of $g$, as defined in \eqref{fgdef}, in terms of empirical fields. Define
 \[
 \delta_{\e}=\delta_{\e}(\{ R_{i,\e}\}_{i=1,\ldots,x/\e},\{ \bar{R}_{i,\e}\}_{i=1,\ldots,\lfloor(c-x)/\e\rfloor})
 \]
 as 
\begin{align*}
\delta_{\e}&=:\max \limits_{1\leq j\leq \lfloor(c-x)/\e\rfloor} \sum_{i=1}^j \int \log\rho_0(\omega) \bar{R}_{i,\e}(d\omega)-\max \limits_{1\leq j\leq x/\e} \sum_{i=1}^j \int \log\rho_0(\omega) R_{i,\e}(d\omega).
\end{align*}
 and
 \begin{equation}\label{Deltadef}
 \Delta_{\e}=\Delta_{\e}(\{ R_{i,\e}\}_{i=1,\ldots,x/\e},\{ \bar{R}_{i,\e}\}_{i=1,\ldots,\lfloor(c-x)/\e\rfloor})
:=\e\delta_{\e} -\e \sum \limits_{i=1}^{x/\e}\int \log \rho_0(\omega) R_{i,\e}(d\omega).
\end{equation}
Recall the constant $C_\kappa$, see \eqref{eq-ckappa}.
\begin{lemma}\label{lemma1} Suppose $\omega_x\in [\kappa,1-\kappa]$ for all $x$.
Then,  it holds that
\[
 -\frac{1}{k}\log\frac{S(xk,\omega)}{S(ck,\omega)}\leq \Big(\Delta_{\e}+\e\log C_{\kappa} \Big)^+ +\frac{(c+x)\log C_\kappa}{\e k}+\frac{\log (xk)}{k},
\]
and for large enough $k$,
\[
 -\frac{1}{k}\log\frac{S(xk,\omega)}{S(ck,\omega)}\geq \Big(\Delta_{\e}-\e\log C_{\kappa} \Big)^+ -\frac{(c+x)\log C_\kappa}{\e k}-\frac{\log (ck)}{k}.
\]
\end{lemma}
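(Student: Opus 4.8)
The plan is to estimate $-\frac{1}{k}\log(S(xk,\omega)/S(ck,\omega))$ by first reducing $\log S$ to the maximum of the potential $V_\omega$ over the relevant range via the two-sided bound \eqref{eqS}, and then approximating that maximum, which is taken over $[0,ck-1]$, by a maximum of finitely many block-averaged quantities built from the empirical fields $R_{i,\e}$ and $\bar R_{i,\e}$.

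Concretely, using the decomposition \eqref{Sdecomp} with $m=xk$ we have $S(ck,\omega)=S(xk,\omega)+e^{V_\omega(xk)}S((c-x)k,\theta^{xk}\omega)$, so
\[
\frac{S(xk,\omega)}{S(ck,\omega)}=\Big(1+e^{V_\omega(xk)}\frac{S((c-x)k,\theta^{xk}\omega)}{S(xk,\omega)}\Big)^{-1}.
\]
Apply \eqref{eqS} to both $S(xk,\omega)$ and $S((c-x)k,\theta^{xk}\omega)$: the numerator and denominator of the inner fraction become, up to polynomial-in-$k$ factors $xk$ and $ck$, equal to $e^{V_\omega(xk)}e^{\max_{xk\le j\le ck-1}(V_\omega(j)-V_\omega(xk))}$ and $e^{\max_{0\le j\le xk-1}V_\omega(j)}$ respectively. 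Hence, up to the $\frac{\log(xk)}{k}$ and $\frac{\log(ck)}{k}$ corrections recorded in the statement,
\[
-\frac{1}{k}\log\frac{S(xk,\omega)}{S(ck,\omega)}=\frac1k\Big(\max_{xk\le j\le ck-1}V_\omega(j)-\max_{0\le j\le xk-1}V_\omega(j)\Big)^+ + O(1/k).
\]
Now I would replace each of the two maxima over integers by the corresponding maximum over the partial sums of block contributions. For an index $j$ lying in the $m$-th block $I_m$ (or $\bar I_m$), $V_\omega(j)$ differs from $\sum_{i=1}^{m}|I_i|\cdot\frac{1}{|I_i|}\sum_{l\in I_i}\log\rho_0(\theta^l\omega)=\e k\sum_{i=1}^m\int\log\rho_0\,R_{i,\e}(d\omega)$ (and similarly for the $\bar I$ blocks, with a leftover $V_\omega(xk)$ offset) by at most the sum of two within-block oscillations, each bounded by $|I_i|\log C_\kappa\le (\e k+1)\log C_\kappa$, using $\log\rho_0\in[-\log C_\kappa,\log C_\kappa]$ under uniform ellipticity. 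This produces the $\e\log C_\kappa$ terms inside the $(\cdot)^+$ and the $\frac{(c+x)\log C_\kappa}{\e k}$ additive errors coming from the at most $(c+x)/\e$ block boundaries and the rounding $||I_i|-|I_j||\le 1$. Recognizing $\e\delta_\e$ as exactly (block approximation of) $\max_j V_\omega(j)\big|_{[xk,ck]}-V_\omega(xk)$ minus $\max_j V_\omega(j)\big|_{[0,xk]}$, and $-\e\sum_{i=1}^{x/\e}\int\log\rho_0 R_{i,\e}$ as $-\frac1k\max$-irrelevant... wait, rather: one checks $\Delta_\e$ is precisely the block surrogate for $\frac1k(\max_{[xk,ck]}V_\omega-\max_{[0,xk]}V_\omega)$ after combining $\e\delta_\e$ with the offset $V_\omega(xk)/k=\e\sum_{i=1}^{x/\e}\int\log\rho_0 R_{i,\e}$; I would carry this bookkeeping out carefully.

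The main obstacle is precisely this bookkeeping: matching $\Delta_\e$ as defined in \eqref{Deltadef} to the right combination of potential maxima and offsets, getting all the $\pm\e\log C_\kappa$ and $\pm\frac{(c+x)\log C_\kappa}{\e k}$ error terms to fall out with the correct signs, and handling the asymmetry between the two partitions (the $I_i$ partition of $[0,xk-1]$ is exact with $x/\e$ blocks, while the $\bar I_i$ partition of $[xk,ck-1]$ has $\lfloor(c-x)/\e\rfloor$ blocks and an inexact total length). I would treat the upper and lower bounds separately, in each case pushing the within-block oscillation error in the favorable direction so that the $(\cdot)^+$ structure is preserved (using that $(a+b)^+\le a^+ + |b|$ and $(a-b)^+\ge a^+-|b|$), and absorbing the polynomial corrections from \eqref{eqS} into the stated $\frac{\log(xk)}{k}$ and $\frac{\log(ck)}{k}$ terms. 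Everything else is routine given \eqref{Sdecomp}, \eqref{eqS}, and uniform ellipticity.
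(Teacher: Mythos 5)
Your proposal is correct and follows essentially the same strategy as the paper's proof: reduce $-\tfrac{1}{k}\log\big(S(xk,\omega)/S(ck,\omega)\big)$ to $\tfrac1k\big(\max_{[xk,ck]}V_\omega-\max_{[0,xk]}V_\omega\big)^+$ via \eqref{eqS}, then replace the two potential maxima by block-level surrogates built from the empirical fields $R_{i,\e},\bar R_{i,\e}$, using uniform ellipticity to control the within-block and block-rounding errors and the elementary inequalities $(a+b)^+\le a^++|b|$, $(a-b)^+\ge a^+-|b|$ to preserve the $(\cdot)^+$ structure. Your extra detour through \eqref{Sdecomp} followed by $\log(1+e^a)\approx a^+$ lands on the same intermediate quantity as the paper's direct observation $\max_{[0,xk]}V-\max_{[0,ck]}V=-(\max_{[xk,ck]}V-\max_{[0,xk]}V)^+$, so the two routes are equivalent; the remaining bookkeeping you defer is exactly what the paper carries out.
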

  
\begin{proof}
Using \eqref{eqS} we get
\begin{equation}
\label{eq-deltabound1}
 \log\frac{S(xk,\omega)}{S(ck,\omega)} \leq \max \limits_{0\leq j\leq xk-1} V_{\omega}(j)-\max \limits_{0\leq j\leq ck-1} V_{\omega}(j)+\log(xk),
\end{equation}
and
\begin{equation}
\label{eq-deltabound2}
\log\frac{S(xk,\omega)}{S(ck,\omega)}\geq \max \limits_{0\leq j\leq xk-1} V_{\omega}(j)-\max \limits_{0\leq j\leq ck-1} V_{\omega}(j)-\log(ck),
\end{equation}
Consider the partitioning in Definition~\ref{partition}. The size of each interval satisfies  $\epsilon k \leq |I_i|,|\bar{I}_i|\leq \epsilon k+1$ for every $i$ possible, hence 
\begin{align*}
    V_{\omega}(xk)&=\sum \limits_{j=0}^{xk-1} \log \rho_j(\omega)
    =\sum \limits_{i=1}^{x/\e} \sum \limits_{m\in I_i}\log \rho_m(\omega)\\
    &=\sum \limits_{i=1}^{x/\e} |I_i|\int \log \rho_0(\omega) R_{i,\e}\\
    &= \e k \sum \limits_{i=1}^{x/\e}\int \log \rho_0(\omega) R_{i,\e}+a_\e(x,k,\omega),
\end{align*}
where $a_\e(x,k,\omega)$ is the error by difference in interval lengths and have the following deterministic bounds
\[
-\frac{x}{\e}\log C_{\kappa}\leq a_\e(x,k,\omega)\leq \frac{x}{\e}\log C_{\kappa}.
\]
We can bound the maximum in terms of those empirical fields using the ellipticity of the environment. Observe that if $j\in I_m$ then it holds that 
\begin{align*}
 V_{\omega}(j)&=V_{\omega}(i_m)+\sum_{i=i_m}^{j}\log \rho_i(\omega)\\
 &\leq \sum \limits_{i=1}^m \sum \limits_{q\in I_i} \log \rho_q(\omega) +(\e k+1) \log C_{\kappa}\\
 &\leq (\e k+1) \sum \limits_{i=1}^m \int \log \rho_0(\omega) R_{i,\e}(d\omega) + (\e k+1)\log C_{\kappa}.
\end{align*}
Hence,
\begin{align*}
   & \frac{1}{k}\max \limits_{0\leq j\leq xk-1} V_{\omega}(j)=\frac{1}{k}\max \limits_{1\leq m\leq x/\e} \max\limits_{j\in I_m} V_{\omega}(j)\\
    &\leq \frac{(\e k+1)}{k}\max \limits_{1\leq m\leq x/\e} \sum \limits_{i=1}^m \int \log \rho_0(\omega) R_{i,\e}(d\omega)+\frac{(\e k+1)}{k}\log C_{\kappa},
\end{align*}
and 
\begin{align*}
  &  \frac{1}{k}\max \limits_{0\leq j\leq xk-1} V_{\omega}(j)\geq \frac{(\e k+1)}{k}\max \limits_{1\leq m\leq x/\e} \sum \limits_{i=1}^m \int \log \rho_0(\omega) R_{i,\e}(d\omega)-\frac{(\e k+1)}{k}\log C_{\kappa}.
\end{align*}
Analogous calculations hold for the empirical fields $\bar{R}_{i,\epsilon}$: 
\begin{align*}
&\frac{1}{k}\max \limits_{xk\leq j\leq ck-1} V_{\omega}(j)-V_{\omega}(xk)\\
&\leq \big(\e+\frac{1}{k}\big) \max \limits_{1\leq m\leq \lfloor(c-x)/\e\rfloor} \sum_{i=1}^m \int \log\rho_0(\omega) \bar{R}_{i,\epsilon}(d\omega)+ \big(\e+\frac{1}{k}\big) \log C_{\kappa},
\end{align*}
and
\begin{align*}
&\frac{1}{k}\max \limits_{xk\leq j\leq ck-1} V_{\omega}(j)-V_{\omega}(xk)\\
&\geq \big(\e+\frac{1}{k}\big) \max \limits_{1\leq m\leq \lfloor(c-x)/\e\rfloor} \sum_{i=1}^m \int \log\rho_0(\omega) \bar{R}_{i,\epsilon}(d\omega)-\big(\e+\frac{1}{k}\big) \log C_{\kappa}.
\end{align*}
By the definition it holds that $|\delta_\e|\leq c\log C_\kappa/\e$. Also for positive $b$  we have $(a+b)^+\leq a^++b$ and hence
\begin{align*}
    \Big(\Delta_{\e}+\frac{\delta_\e}{k}+\big(\e+\frac{x}{\e k}\big)\log C_{\kappa} \Big)^+\leq \Big(\Delta_{\e}+\e\log C_{\kappa} \Big)^++\frac{(c+x)\log C_\kappa}{\e k},
\end{align*}
and if $b<|a|$ it holds $(a-b)^+\geq a^+-b$. Therefore for large enough $k$
\begin{align*}
    \Big(\Delta_{\e}+\frac{\delta_\e}{k}-\big(\e+\frac{x}{\e k}\big)\log C_{\kappa} \Big)^+\geq \Big(\Delta_{\e}-\e\log C_{\kappa} \Big)^+-\frac{(c+x)\log C_\kappa}{\e k},
\end{align*}
Applying those bounds to \eqref{eq-deltabound1} and \eqref{eq-deltabound2} yields the lemma.
\end{proof}
Recall the notation $\hat \omega$ and $\hat \omega^L$ (see \eqref{eq-hatomega} and
\eqref{eq-TL}).
\begin{lemma}\label{lemmadrift} 

For any $x>L$ and $\eta\in M_1^s(\Sigma)$,
\[
\int \log \rho_0(\hat\omega^L) \eta(d\omega)\leq\int \log \rho_x(\hat{\omega}) \eta(d\omega) \leq -\left|\int \log \rho_0(\omega) \eta(d\omega)\right|,
\]
\end{lemma}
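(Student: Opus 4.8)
The plan is to reduce both inequalities to elementary pointwise bounds on $\rho_x(\hat\omega)$ and $\rho_x(\hat\omega^L)$ combined with the shift–invariance of $\eta$; the one delicate point is that $\hat\omega$, unlike $\hat\omega^L$, is \emph{not} a shift-covariant function of $\omega$ (it remembers the origin), so stationarity of $\eta$ cannot be applied to it directly. I would begin by recording, from $S(n+1,\omega)=S(n,\omega)+e^{V_\omega(n)}$ and $\omega_i e^{V_\omega(i)}=(1-\omega_i)e^{V_\omega(i-1)}$, the identities
\[
1-\hat\omega_i=(1-\omega_i)\frac{S(i-1,\omega)}{S(i,\omega)},\qquad
\hat\omega_i=\omega_i\frac{S(i+1,\omega)}{S(i,\omega)},\qquad\text{so}\qquad
\rho_i(\hat\omega)=\rho_i(\omega)\frac{S(i-1,\omega)}{S(i+1,\omega)},
\]
and, unwinding the definitions of $S$ and the shift, $\hat\omega^L_i=\omega_i\bigl(1+W(L,\theta^{i-L}\omega)\bigr)$ with $W(L,\theta^{i-L}\omega)=e^{V_\omega(i)}\big/\sum_{j=i-L}^{i-1}e^{V_\omega(j)}$, while $\hat\omega_i=\omega_i(1+W(i,\omega))$ with $W(i,\omega)=e^{V_\omega(i)}\big/\sum_{j=0}^{i-1}e^{V_\omega(j)}$. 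Since $x>L\ge 1$ we have $x\ge 2$; if $L=1$ then $\hat\omega^1\equiv 1$, $\rho_x(\hat\omega^1)=0$ and the left inequality is trivial, so assume $L\ge 2$. For fixed $x,L$ all integrands below are bounded by uniform ellipticity, so integrability is automatic.

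For the left inequality, since $x>L$ the index set $\{x-L,\dots,x-1\}$ is contained in $\{0,\dots,x-1\}$, so $W(L,\theta^{x-L}\omega)\ge W(x,\omega)$ and hence $\hat\omega^L_x\ge\hat\omega_x$, $\eta$-a.s.\ (equivalently, this follows from the monotonicity in Lemma \ref{lemma2} together with the equality $\hat\omega^x_x=\hat\omega_x$). As $p\mapsto(1-p)/p$ is decreasing on $[\kappa,1]\ni\hat\omega_x,\hat\omega^L_x$, we get $\rho_x(\hat\omega^L)\le\rho_x(\hat\omega)$ pointwise, hence $\int\log\rho_x(\hat\omega^L)\,\eta(d\omega)\le\int\log\rho_x(\hat\omega)\,\eta(d\omega)$. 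Because $\omega\mapsto\hat\omega^L$ commutes with the shift, stationarity of $\eta$ gives $\int\log\rho_x(\hat\omega^L)\,\eta(d\omega)=\int\log\rho_0(\hat\omega^L)\,\eta(d\omega)$, proving the left inequality.

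For the right inequality I would bound $\int\log\rho_x(\hat\omega)\,\eta(d\omega)$ in two ways and take the smaller. First, $S(x-1,\omega)\le S(x+1,\omega)$ gives $\rho_x(\hat\omega)\le\rho_x(\omega)$ pointwise, so by stationarity $\int\log\rho_x(\hat\omega)\,\eta(d\omega)\le\int\log\rho_x(\omega)\,\eta(d\omega)=\int\log\rho_0(\omega)\,\eta(d\omega)$. Second, applying \eqref{Sdecomp} with $m=2$, $S(x+1,\omega)=S(2,\omega)+e^{V_\omega(2)}S(x-1,\theta^2\omega)\ge\rho_1(\omega)\rho_2(\omega)S(x-1,\theta^2\omega)$; inserting this into $\log\rho_x(\hat\omega)=\log\rho_x(\omega)+\log S(x-1,\omega)-\log S(x+1,\omega)$ and integrating against $\eta$, the three integrals $\int\log\rho_j(\omega)\,\eta(d\omega)$, $j=0,1,2$, are equal, and, crucially, $\int\log S(x-1,\theta^2\omega)\,\eta(d\omega)=\int\log S(x-1,\omega)\,\eta(d\omega)$ since $\eta$ is stationary and $\log S(x-1,\cdot)$ is a fixed bounded function; this yields $\int\log\rho_x(\hat\omega)\,\eta(d\omega)\le-\int\log\rho_0(\omega)\,\eta(d\omega)$. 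Combining the two bounds gives $\int\log\rho_x(\hat\omega)\,\eta(d\omega)\le-\bigl|\int\log\rho_0(\omega)\,\eta(d\omega)\bigr|$. I expect the only real work to be the bookkeeping in this last step: one must peel exactly the first two coordinates via \eqref{Sdecomp} so that the leftover factor is genuinely a shift of $S(x-1,\cdot)$ and stationarity of $\eta$ can absorb it.
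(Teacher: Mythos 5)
Your proof is correct, and for the left inequality and the bound $\int\log\rho_x(\hat\omega)\,\eta(d\omega)\le\int\log\rho_0(\omega)\,\eta(d\omega)$ it matches the paper's argument exactly. The genuine difference is in the other half of the right inequality, i.e.\ showing $\int\log\rho_x(\hat\omega)\,\eta(d\omega)\le-\int\log\rho_0(\omega)\,\eta(d\omega)$. The paper rewrites $S(x-1,\omega)/S(x+1,\omega)$ by normalizing both sums by their top term, recognizes $V_\omega(j)-V_\omega(x)$ as a potential of the flipped--reversed environment $r(\theta^x\bar\omega)$ (see \eqref{eq-mrs2}), and then uses stationarity of $\eta$ under that transformation together with $S(x-1,\bar\omega)\le S(x+1,\bar\omega)$ to absorb the $S$-factors; the upshot is $\int\log\frac{S(x-1,\omega)}{S(x+1,\omega)}\,\eta(d\omega)\le-2\int\log\rho_0\,\eta(d\omega)$. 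You reach the same estimate more directly: using \eqref{Sdecomp} with $m=2$ to peel off the first two coordinates, you get $S(x+1,\omega)\ge\rho_1\rho_2\,S(x-1,\theta^2\omega)$, and then shift-invariance of $\eta$ makes $\int\log S(x-1,\theta^2\omega)\,\eta$ cancel $\int\log S(x-1,\omega)\,\eta$, leaving exactly $-2\int\log\rho_0\,\eta$. Your route avoids introducing $r(\cdot)$ and $\bar\omega$ altogether and is the more transparent of the two, at the cost of a small check that $x\ge 2$ (automatic from $x>L\ge 1$). Both arguments rely on the same two ingredients---$\eta$ being stationary and $\log S(n,\cdot)$ being a fixed bounded measurable function---so there is nothing lost; the paper's version is perhaps preferable only because the reverse-flip environments $r(\theta^x\bar\omega)$ recur elsewhere in that section and the authors wanted to reuse the notation. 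One cosmetic remark: your aside on $L=1$ is unnecessary, since the pointwise bound $W(L,\theta^{x-L}\omega)\ge W(x,\omega)$ that you derive already yields $\hat\omega^1_x=1\ge\hat\omega_x$ and the left inequality holds trivially (the left side is $-\infty$); no separate case split is needed.
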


\begin{proof} 
  Since $\hat{\omega}^L_x\geq \hat{\omega}_x$ for all $x>L$, we have
  that $\rho_x(\hat{\omega})\geq \rho_x(\hat{\omega}^L)$,
  and the first inequality follows. To get the second one observe that by the definition of the transformed probabilities it holds that
\begin{align}
    \nonumber \int \log \rho_x(\hat{\omega}) \eta(d\omega) &= \int \log \frac{\rho_x(\omega)S(x-1,\omega)}{S(x+1,\omega)} \eta(d\omega)\\
    \label{logint} &=\int \log \rho_0(\omega)\eta(d\omega)+ \int \log \frac{S(x-1,\omega)}{S(x+1,\omega)} \eta(d\omega).
\end{align}
The second term satisfies
\begin{align*}
    \frac{S(x-1,\omega)}{S(x+1,\omega)}&=\frac{\sum\limits_{j=0}^{x-2}e^{V_\omega (j)} }{\sum\limits_{j=0}^{x}e^{V_\omega (j)}}
    =\frac{e^{V_\omega (x-2)}\sum\limits_{j=0}^{x-2}e^{V_\omega (j)-V_\omega (x-2)} }{e^{V_\omega (x)}\sum\limits_{j=0}^{x}e^{V_\omega (j)-V_\omega (x)}},
\end{align*}
but $V_\omega (j)-V_\omega (x)=-\sum_{i=j+1}^{x} \log \rho_i(d\omega)=V_{r(\theta^{x}\bar{\omega})}(x-j)$ and therefore, recalling the notation $r(\cdot)$ and $\bar \omega$ for the reversed and flipped environment, we have
\begin{align*}
    \frac{S(x-1,\omega)}{S(x+1,\omega)}&=\frac{S(x-1,r(\theta^{x-2}\bar{\omega})  }{\rho_{x}\rho_{x-1}S(x+1,r(\theta^{x}\bar{\omega}) )}.
\end{align*}
Integrating with respect to $\eta$ we obtain that
\begin{align}
    \nonumber &\int \log \frac{S(x-1,\omega)}{S(x+1,\omega)} \eta(d\omega)=\int \log \frac{S(x-1,r(\theta^{x-2}\bar{\omega})  }{\rho_{x}\rho_{x-1}S(x+1,r(\theta^{x}\bar{\omega}) )} \eta(d\omega)\\
    \nonumber&\quad =-2\int \log \rho_0(\omega) \eta(d\omega)+\int \log S(x-1,\bar{\omega}) \eta(d\omega)
    -\int \log S(x+1,\bar{\omega}) \eta(d\omega)\\
    \nonumber&\quad \leq -2\int \log \rho_0(\omega) \eta(d\omega),
\end{align}
where the second equality used
the stationarity of the measure,
while the inequality used that $S(x-1,\bar{\omega})\leq S(x+1,\bar{\omega})$, 
as the latter is a sum of positive terms. Substituting  the last display 
in \eqref{logint}, we obtain that
\[
\int \log \rho_x(\hat{\omega}) \eta(d\omega) \leq -\int \log \rho_0(\omega) \eta(d\omega).
\]
To conclude the proof one just needs to notice that $\hat{\omega}_x\geq \omega_x$ for all $x$. 
\end{proof}

\subsection{Approximate rate functions and LDP for conditioned environment}
We introduce the following functions
\begin{align*}
\phi_{i}(\hat{\omega},\lambda)&=\ex_i^{\hat{\omega}} [e^{\lambda\tau_{i+1}}],\\
\phi_{i,M}(\hat{\omega},\lambda)&=\ex_i^{\hat{\omega}} [e^{\lambda\tau_{i+1}}\ind[\tau_{i+1}<M]],\\
   \hat{\phi}_L^M(\lambda, \omega)&={\phi}^M(\lambda, \hat{\omega}^L)=\ex^{\hat{\omega}^L}_0[e^{\lambda \tau_1}\ind[\tau_1<M]],\\
   \hat{\phi}_L(\lambda, \omega)&={\phi}(\lambda, \hat{\omega}^L)=\ex^{\hat{\omega}^L}_0[e^{\lambda \tau_1}].
\end{align*}
Observe that by Lemma~\ref{lemmadrift} there are no concerns about $\tau_{i+1}$ being finite on those functions. Fix $1\leq J<xk$ integer and define
\begin{align*}
    \hat{I}_J(x,k,\omega)&=\sup \limits_{\lambda\leq 0} \Big\{\lambda- \frac{1}{k}\sum \limits_{i=J}^{xk-1} \log \phi_{i}(\hat{\omega},\lambda)\Big\},\\
    \hat{I}_{J,M}(x,k,\omega)&=\sup \limits_{\lambda\leq 0} \Big\{\lambda- \frac{1}{k}\sum \limits_{i=J}^{xk-1} \log \phi_{i,M}(\hat{\omega},\lambda)\Big\},\\
    \hat{I}_J^L(x,k,\omega)&=\sup \limits_{\lambda\leq 0} \Big\{\lambda- \frac{1}{k}\sum \limits_{i=J}^{xk-1} \log \hat{\phi}_L(\lambda, \theta^i\omega)\Big\},\\
    \hat{I}_{J,M}^L(x,k,\omega)&=\sup \limits_{\lambda\leq 0} \Big\{\lambda- \frac{1}{k}\sum \limits_{i=J}^{xk-1} \log \hat{\phi}_L^M(\lambda, \theta^i\omega)\Big\}.
\end{align*}
In general, we suppress the notations $k$ and $\omega$ from these functions when no confusion is possible, we also suppress $J$ in case it is $0$.

\begin{lemma}\label{lemmaPhat}  Suppose $ \omega_i\in [\kappa,1-\kappa]$ for all $i$.
Then, for any $J<xk$ (possibly depending on $k$), it holds that
\begin{align}
\label{eq-UBtau}
-\frac{1}{k}\log \pr^{\hat{\omega}}_J[\tau_{xk}\leq k]&\geq \hat{I}_J(x,k,\omega).
\end{align}
Further, there is  a constant $\alpha_3=\alpha_3(\kappa,x)>0$ so that
for any $M>0$ and $l\in (0,1)$,
\begin{align}
\label{eq-LBtau}
-\frac{1}{k}\log \pr^{\hat{\omega}}_J[\tau_{xk}\leq k(1+l)]&\leq \hat{I}_{J,M}(x,k,\omega)+\alpha_3l-\frac{1}{k}\log \big(1-2e^{-\frac{2kl^2}{xM^2}}\big).
\end{align}
\end{lemma}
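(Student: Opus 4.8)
The plan is to prove both bounds in Lemma~\ref{lemmaPhat} via the standard exponential Chebyshev (Markov) method applied to the hitting time $\tau_{xk}$ under the conditioned environment $\hat\omega$, using the decomposition $\tau_{xk}=\sum_{i=J}^{xk-1}(\tau_{i+1}-\tau_i)$ together with the (quenched) strong Markov property and the fact that, under $\pr^{\hat\omega}$, the walk never backtracks past $0$, so each increment $\tau_{i+1}-\tau_i$ is finite and the moment generating functions $\phi_i(\hat\omega,\lambda)$ are the right objects. Crucially, after shifting to start at $i$, the increment $\tau_{i+1}-\tau_i$ under $\pr^{\hat\omega}_i$ has the same law as $\tau_1$ under $\pr^{\theta^i\hat\omega}_0$, but these increments are \emph{not} independent, so the product structure has to come from iterating conditional expectations: $\ex^{\hat\omega}_J[e^{\lambda\tau_{xk}}]=\prod_{i=J}^{xk-1}\phi_i(\hat\omega,\lambda)$ for $\lambda\le 0$, which holds exactly because conditionally on $\tau_i$ the future is a fresh walk in the (deterministic) environment $\theta^i\hat\omega$ started at $i$.

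For the upper bound \eqref{eq-UBtau}: for any $\lambda\le 0$, Markov's inequality applied to $e^{\lambda\tau_{xk}}$ (note $\lambda\le 0$ so $\{\tau_{xk}\le k\}=\{e^{\lambda\tau_{xk}}\ge e^{\lambda k}\}$) gives
$\pr^{\hat\omega}_J[\tau_{xk}\le k]\le e^{-\lambda k}\,\ex^{\hat\omega}_J[e^{\lambda\tau_{xk}}]=e^{-\lambda k}\prod_{i=J}^{xk-1}\phi_i(\hat\omega,\lambda)$.
Taking $-\frac1k\log$ of both sides and then optimizing over $\lambda\le 0$ yields exactly $-\frac1k\log\pr^{\hat\omega}_J[\tau_{xk}\le k]\ge \hat I_J(x,k,\omega)$. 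This direction is essentially immediate once the multiplicative identity is in place.

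For the lower bound \eqref{eq-LBtau}, the point is to produce a matching Cramér-type lower bound while (a) truncating the moment generating functions at level $M$ so that they are finite and well-behaved, and (b) allowing a slack $l$ in the time budget to absorb the probability lost by truncating. The approach is a change-of-measure / tilting argument: fix the optimal (or near-optimal) $\lambda^*\le 0$ in the variational problem defining $\hat I_{J,M}$, tilt each increment by $e^{\lambda^* t}\ind[t<M]$ normalized by $\phi_{i,M}(\hat\omega,\lambda^*)$, obtaining a new law under which the increments are independent with finite support $[0,M)$; under the tilted law the mean of $\frac1k\tau_{xk}$ is, up to the derivative of $\sum\log\phi_{i,M}$ in $\lambda$, approximately $1$, and by independence and Hoeffding's inequality (each increment bounded by $M$, there are $\le xk$ of them) the probability that $\tau_{xk}$ exceeds $k(1+l)$ under the tilted law is at most $2e^{-2k l^2/(xM^2)}$ — which is where the explicit term $-\frac1k\log(1-2e^{-2kl^2/(xM^2)})$ comes from. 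Converting back to $\pr^{\hat\omega}_J$ via the Radon–Nikodym factor $\prod\phi_{i,M}(\hat\omega,\lambda^*)\,e^{-\lambda^*\tau_{xk}}\ge \prod\phi_{i,M}(\hat\omega,\lambda^*)\,e^{-\lambda^* k(1+l)}$ on the event $\{\tau_{xk}\le k(1+l)\}$ (using $\lambda^*\le 0$) gives, after taking $-\frac1k\log$, a bound of the form $\hat I_{J,M}(x,k,\omega)-\lambda^* l-\frac1k\log(1-2e^{-2kl^2/(xM^2)})$; bounding $-\lambda^*$ by a constant $\alpha_3=\alpha_3(\kappa,x)$ — which uses uniform ellipticity to show the optimizing $\lambda^*$ stays in a bounded set, since the increments have exponential tails with rate depending only on $\kappa$ — finishes the proof.

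The main obstacle is the bookkeeping in the change-of-measure step of the lower bound: one must verify that the tilted increments really are independent and uniformly bounded, control the tilted mean of $\tau_{xk}/k$ well enough that a Hoeffding concentration at scale $l$ applies, and — most delicately — establish that the near-optimal $\lambda^*$ can be chosen in a compact interval depending only on $\kappa$ and $x$, so that the linear-in-$l$ error term has a deterministic constant $\alpha_3$ in front of it rather than an environment-dependent one. The finiteness and ellipticity bounds (Lemma~\ref{lemmadrift} and Lemma~\ref{Wbound}) are exactly what guarantees this uniform control on $\lambda^*$.
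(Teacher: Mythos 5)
Your overall strategy — Chebyshev with $\lambda\le 0$ plus the strong Markov multiplicative identity for the upper bound, and tilting/change-of-measure with truncation at $M$, Hoeffding, and a uniform bound on the optimal $\lambda^*$ for the lower bound — is the same route the paper takes, and your upper-bound argument is essentially verbatim correct. The Hoeffding exponent $2kl^2/(xM^2)$ and the identification of $\alpha_3$ with a deterministic bound on $-\lambda^*_M(x)$ coming from ellipticity are also right.

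However, your description of the Radon--Nikodym step in the lower bound contains a direction error that, as written, makes that step fail. You claim that on $\{\tau_{xk}\le k(1+l)\}$ one has $e^{-\lambda^*\tau_{xk}}\ge e^{-\lambda^* k(1+l)}$ because $\lambda^*\le 0$. But with $\lambda^*\le 0$ the map $t\mapsto e^{-\lambda^* t}$ is nondecreasing, so on $\{\tau_{xk}\le k(1+l)\}$ one gets $e^{-\lambda^*\tau_{xk}}\le e^{-\lambda^* k(1+l)}$, the opposite inequality, and the one-sided event gives no useful lower bound on the density $d\pr/d\qr$ that would cancel against the $\prod\phi_{i,M}$ term. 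The fix (and what the paper actually does) is to insert the two-sided event $\{k(1-l)<\tau_{xk}<k(1+l)\}$ — which is exactly what your two-sided Hoeffding estimate controls — and then bound the density from below using the \emph{lower} endpoint: since $\lambda^*\le 0$ and $\tau_{xk}>k(1-l)$, one has $e^{-\lambda^*\tau_{xk}}\ge e^{-\lambda^* k(1-l)}$. Dividing by $e^{\lambda^* k(1-l)}$ rather than $e^{\lambda^* k(1+l)}$ is what produces, after taking $-\frac{1}{k}\log$, the term $\hat{I}_{J,M}(x,k,\omega) + (-\lambda^*) l$ with $-\lambda^*\ge 0$, matching the stated $\alpha_3 l$; with your $k(1+l)$ choice the sign of the $\lambda^* l$ term would come out wrong. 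So the final form you wrote is correct, but the intermediate inequality you invoked does not deliver it, and the two-sided restriction is not optional — it is the point where the Hoeffding bound and the tilting bound are forced to share the same event. One further small inaccuracy: the bound on $\lambda^*_M(x)$ comes directly from the first-order condition $\sum_i \phi'_{i,M}/\phi_{i,M}=k$ together with the elementary estimate $\phi'_{i,M}/\phi_{i,M}\le 1+e^{\lambda}/\kappa$ (which only uses $\hat\omega_i\ge\omega_i\ge\kappa$), not from Lemma~\ref{lemmadrift} or Lemma~\ref{Wbound}.
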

\begin{proof}
Let $\tau_+(i)=\inf \{ t>0: X_{\tau_{i}+t}=i+1\}$ denote the time to hit $i+1$ from $i$.
We can decompose $\tau_{xk}$ as the sum of such variables and therefore for $\lambda\leq 0$ we have  from Chebyshev's inequality and the strong Markov property
that
\begin{align*}
    \pr^{\hat{\omega}}_J[\tau_{xk}\leq k]&=\pr^{\hat{\omega}}_J\Big[\sum \limits_{i=J}^{xk-1}\tau_+(i) \leq k \Big]\\
    &\leq \exp\Big( \inf \limits_{\lambda \leq 0 } \Big\{\sum\limits_{i=J}^{xk-1} \log\ex_i^{\hat{\omega}} [e^{\lambda\tau_{i+1}}\ind[\tau_{i+1}<\infty]] -\lambda k\Big\}\Big)\\
    &\leq \exp\Big(-k \sup \limits_{\lambda \leq 0 } \Big\{\lambda -\frac{1}{k}\sum\limits_{i=J}^{xk-1} \log\ex_i^{\hat{\omega}} [e^{\lambda\tau_{i+1}}]\Big\}\Big),
\end{align*}
which implies \eqref{eq-UBtau}.

We turn to the  proof of \eqref{eq-LBtau}. Let
\begin{equation}
  \label{eq-lms}
F_M(\lambda,x, k,\hat\omega)=\lambda -\frac{1}{k}\sum\limits_{i=J}^{xk-1} \log\phi_{i,M}(\hat{\omega},\lambda), \;\lambda_M^*(x)=\argmax \limits_{\lambda \leq 0 } F_M(\lambda,x, k,\hat
\omega).
\end{equation}
Since 
\begin{align*}
F_M(0,x,k,\hat\omega)&=-\frac1k \sum_{i=J}^{xk-1} \log \pr_i^{\hat \omega}[\tau_{i+1}<M]\\
&<
-\frac1k \sum_{i=J}^{xk-1} \log \pr_i^{\hat \omega}[\tau_{i+1}=1]=\lim_{\lambda\to-\infty}F_M(\lambda,x,k,\hat\omega),
\end{align*}
and $E_i^{\hat \omega}(\tau_{i+1}|\tau_{i+1}<M)\geq 1$ with strict inequality 
when $i>1$, it follows that $\lambda_M^*(x)\neq 0$ is well defined.
Now  let $A_{\omega,M}=\{\tau_+(i)\leq M,\, \mbox{ for all }i\in [J,xk-1]\}$ and denote  by
$\tilde{\pr}^{\omega,M}$ the quenched law of $\{\tau_+(i) \}_{J\leq i<xk}$ conditioned on the event $A_{\omega,M}$. Fix $l>0$ and  write 
\begin{align}
\nonumber    \pr^{{\hat{\omega}}}_J[ \tau_{xk}< k(1+l)]&\geq \pr^{\hat{\omega}}_J[ \tau_{xk}< k(1+l)\mid A_{\omega,M}] \pr^{\hat{\omega}}_J[A_{\omega,M}]\\
\nonumber    &\geq \tilde{\pr}^{{{\hat{\omega}},M}}_J\big[k(1-l)< \tau_{xk}< k(1+l)\big]\pr^{\hat{\omega}}_J[A_{\omega,M}]\\
\nonumber    &=\tilde{\pr}^{{{\hat{\omega}},M}}_J\big[k(1-l)< \sum_{i=J}^{xk-1}\tau_+(i) < k(1+l)\big]\pr^{\hat{\omega}}_1[A_{\omega,M}]\\
\label{Phat2}    &=\int \limits \ind[(z_i)_i:k(1-l) <\sum_i z_i<k(1+l)]\tilde{\pr}^{\hat{\omega},M}(d\mathbf{z})\pr^{\hat{\omega}}_J[A_{\omega,M}].
\end{align}
 Since $\tilde{\pr}^{{\hat{\omega}},M}$ is a product measure we can write
\begin{equation}
\label{prodmeasure1}\tilde{\pr}^{\hat{\omega},M}(d\mathbf{z})=\prod \limits_{i=J}^{xk-1}\overline{\pr}^{i,{\hat{\omega}},M}(dz_i),
\end{equation}
where $\overline{\pr}^{i,{\hat{\omega}},M}$ corresponds to the quenched law of $\tau_+(i)$ under $A_{\omega,M}$. Define the tilted measure
$\overline{\qr}^{i,{\hat{\omega}},M}$ by 
\[
\overline{\qr}^{i,{\hat{\omega}},M}(dz)=\frac{\pr^{\hat{\omega}}_J[\tau_+(i)<M]e^{\lambda_M^*(x)z}}{\phi_{i,M}(\hat{\omega},\lambda_M^*(x))}\overline{\pr}^{i,{\hat{\omega}},M}(dz).
\]
Since 
\[
\int e^{\lambda_M^*(x)z}\overline{\pr}^{i,\hat\omega,M}(dz)=\frac{\ex_J^{\hat{\omega}}[e^{\lambda^*(x)\tau_+(i)}\ind[\tau_+(i)<M]]}{\pr^{\hat{\omega}}_J[\tau_+(i)<M]}=\frac{\phi_{i,M}({\hat{\omega}}, \lambda_M^*(x))}{\pr^{\hat{\omega}}_J[\tau_+(i)<M]},
\]
then $\overline{\qr}^{i,\hat\omega,M}$ is indeed a probability measure. We also consider the joint product measure $\qr^{\hat \omega,M}$ in a way analogous to \eqref{prodmeasure1}:
\begin{align*}
\qr^{\hat \omega,M}(d\mathbf{z})&=\prod \limits_{i=J}^{xk-1}\overline{\qr}^{i,\hat \omega,M}(dz_i)\\
&=\frac{\pr^{\hat{\omega}}_J[A_{\omega,M}]e^{\lambda^*_M(x)\sum_i z_i}}{\prod \limits_{i=J}^{xk-1}\phi_{i,M}(\hat{\omega}, \lambda_M^*(x))}\tilde{\pr}^{\hat \omega,M}(d\mathbf{z}),
\end{align*}
see \eqref{eq-lms} for the definition of $\lambda_M^*(x)$.
Introduce the set
$C(z)=\{k(1-l) <z<k(1+l) \}$. From \eqref{Phat2} we get
\begin{align}
    \nonumber&\pr^{{\hat{\omega}}}_J[ \tau_{xk}< k(1+l)]\geq \int \limits \ind[C(\sum_iz_i)]\tilde{\pr}^{\hat\omega,M}(d\mathbf{z})\pr^{\hat{\omega}}_J[A_{\omega,M}]\\
   \nonumber &=\frac{\prod \limits_{i=J}^{xk-1}\phi_{i,M}(\hat{\omega}, \lambda_M^*(x))}{e^{\lambda_M^*(x)k(1-l)}}\int \frac{\pr^{{\hat \omega}}_1[A_{\omega,M}]e^{\lambda_M^*(x)k(1-l)}}{\prod \limits_{i=J}^{xk-1}\phi_{i,M}(\hat{\omega}, \lambda_M^*(x))} \ind[C(\sum_iz_i)]\tilde{\pr}^{\hat \omega,M}(d\mathbf{z})\\
    \nonumber&\geq \frac{\prod \limits_{i=J}^{xk-1}\phi_{i,M}(\hat{\omega}, \lambda_M^*(x))}{e^{\lambda_M^*(x)k(1-l)}}\int  \frac{\pr^{\hat {\omega}}_1[A_{\omega,M}]e^{\lambda_M^*(x)\sum_i z_i}}{\prod \limits_{i=J}^{xk-1}\phi_{i,M}(\hat{\omega}, \lambda_M^*(x))}\ind[C(\sum_iz_i)]\tilde{\pr}^{\hat\omega,M}(d\mathbf{z})\\
    \nonumber&\geq \frac{\prod \limits_{i=J}^{xk-1}\phi_{i,M}(\hat{\omega}, \lambda_M^*(x))}{e^{\lambda_M^*(x)k(1-l)}}\int \ind[C(\sum_iz_i)]\qr^{\hat \omega,M}(d\mathbf{z})\\
    \label{upperIM}&= e^{-k\hat{I}_{J,M}(x,k,\omega)}e^{l\lambda_M^*(x)k}\qr_J^{\hat \omega,M}\Big[k(1-l)< \sum_{i=J}^{xk-1}\tau_+(i) < k(1+l)\Big].
\end{align}
Now we prove that
\begin{equation}
\label{Qbound}\qr_J^{\hat \omega,M}\Big[k(1-l)< \sum_{i=J}^{xk-1}\tau_+(i) < k(1+l)\Big]\geq 1-2e^{-\frac{2kl^2}{xM^2}}.
\end{equation}
 First observe that the moment generating function of $\tau_+(i)$ under $\overline{\qr}^{i,\hat\omega,M}$ is 
\begin{align*}
\phi^{\overline{\qr}}_{i,M}(\hat{\omega}, \lambda)&=\int e^{\lambda z} \overline{\qr}^{i,\hat{\omega},M}(dz)\\
&=\pr^{\hat{\omega}}_J[\tau_+(i)<M]\int \frac{e^{(\lambda+\lambda_M^*(x)) z}}{\phi_{i,M}(\hat{\omega},\lambda^*_M(x))} \overline{\pr}^{i,\hat{\omega},M}(dz)\\
&=\frac{\phi_{i,M}(\hat{\omega},\lambda_M^*(x)+\lambda)}{\phi_{i,M}(\hat{\omega},\lambda_M^*(x))}.
\end{align*}
Therefore
\[
\int \tau_+(i)  d\overline{\qr}^{i,\hat{\omega},M}=\frac{\phi_{i,M}'(\hat{\omega},\lambda_M^*(x))}{\phi_{i,M}(\hat{\omega},\lambda_M^*(x))},
\]
and thus 
\[
\int \sum \limits_{i=J}^{xk-1}\tau_+(i)  d{\qr^{\hat \omega,M}}=\sum \limits_{i=J}^{xk-1}\frac{\phi_{i,M}'(\hat{\omega},\lambda_M^*(x))}{\phi_{i,M}(\hat{\omega},\lambda_M^*(x))}.
\]
Since $\lambda_M^*(x)\in (-\infty,0)$,  it is a critical point of the function $F_M$, i.e.
\begin{align*}
    &\frac{\partial}{\partial \lambda}\Big(\lambda- \frac{1}{k}\sum \limits_{i=J}^{xk-1} \log \phi_{i,M}(\hat{\omega},\lambda) \Big)\Big|_{\lambda=\lambda^*(x)}=0
\Leftrightarrow\sum  \limits_{i=J}^{xk-1}\frac{\phi_{i,M}'(\hat{\omega},\lambda_M^*(x))}{\phi_{i,M}(\hat{\omega},\lambda_M^*(x))}=k,
\end{align*}
implying that
\[
\int \sum \limits_{i=J}^{xk-1}\tau_+(i)  d{\qr^{\hat\omega,M}}=k.
\]
Since under $\qr^{\hat \omega,M}$ we have $\tau_+(i)<M$ for all $i$, and $\qr^{\hat \omega,M}$ is (quenched) a product measure, we  can use 
Hoeffding's inequality (see e.g. \cite[Corollary 2.4.7]{DZ})
to conclude \eqref{Qbound}.

We next find an lower bound for $\lambda^*_M(x)$. Toward this end, recall that  $\lambda^*_{M}(x)$ is the solution of the equation
\[
\sum  \limits_{i=J}^{xk-1}\frac{\phi_{i,M}'(\hat{\omega},\lambda)}{\phi_{i,M}(\hat{\omega},\lambda)}=k,
\]
 but since $\lambda\leq 0$,
 \begin{align*}
     \frac{\phi_{i,M}'(\hat{\omega},\lambda)}{\phi_{i,M}(\hat{\omega},\lambda)}&=\frac{\ex^{\hat{\omega}}_i[\tau_{i+1}e^{\lambda  \tau_{i+1}}\ind[\tau_{i+1}<M]]}{\ex^{\hat{\omega}}_i[e^{\lambda  \tau_{i+1}}\ind[\tau_{i+1}<M]}\\
     &\leq \frac{\hat{\omega}_i e^\lambda +\ex^{\hat{\omega}}_i[\tau_{i+1}e^{\lambda  \tau_{i+1}}\ind[2\leq \tau_{i+1}<M]]}{\hat{\omega}_i e^\lambda }\\
      &\leq 1+\frac{e^{\lambda}}{\hat{\omega}_i }\leq 1+\frac{e^{\lambda}}{\kappa},
 \end{align*}
thus
 \begin{align}
    \nonumber k&=\sum  \limits_{i=J}^{xk-1}\frac{\phi_{i,M}'(\hat{\omega},\lambda^*_{M}(x))}{\phi_{i,M}(\hat{\omega},\lambda^*_{M}(x))}\leq (xk-J)\Big(1+\frac{e^{\lambda^*_{M}(x)}}{\kappa}\Big)\\
    \nonumber&\leq xk\Big(1+\frac{e^{\lambda^*_{M}(x)}}{\kappa}\Big)\\
    \label{lambdabound}&\Rightarrow e^{\lambda^*_{M}(x)}\geq \kappa\Big(\frac{1-x}{x}\Big)\Rightarrow \lambda^*_{M}(x) \geq \log  \Big(\kappa\Big(\frac{1-x}{x}\Big)\Big).
 \end{align}
 Combining \eqref{lambdabound}, \eqref{Qbound} with \eqref{upperIM} yields \eqref{eq-LBtau}.
\end{proof}

\begin{lemma}
\label{lemmaShat} For any $a,b\in \mathbb{Z}^+$ it holds that
 \begin{align*}
    S(a,\theta^b\hat{\omega})&= \frac{S(b+1,\omega)S(b,\omega)}{e^{V_{\omega}(b)}}\Big(\frac{1}{S(b,\omega)}-\frac{1}{S(a+b,\omega)}\Big).
 \end{align*}
\end{lemma}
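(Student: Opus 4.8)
The plan is to compute $S(a,\theta^b\hat\omega)$ directly from the definition \eqref{defS}, namely $S(a,\theta^b\hat\omega)=\sum_{i=0}^{a-1}e^{V_{\theta^b\hat\omega}(i)}$, by first identifying the potential $V_{\theta^b\hat\omega}(i)$ in terms of the original environment $\omega$. First I would use the definition \eqref{eq-hatomega} of $\hat\omega$, which gives $\rho_j(\hat\omega)=(1-\hat\omega_j)/\hat\omega_j$. A short computation from \eqref{eq-hatomega} and the decomposition $S(j+1,\omega)=S(j,\omega)+e^{V_\omega(j)}$ shows that
\[
\rho_j(\hat\omega)=\frac{1-\hat\omega_j}{\hat\omega_j}=\frac{S(j,\omega)-e^{V_\omega(j)}+ (S(j+1,\omega)-S(j,\omega))\cdot 0}{\cdots},
\]
more precisely one checks $\hat\omega_j=\omega_j S(j+1,\omega)/S(j,\omega)$ yields $\rho_j(\hat\omega)=\rho_j(\omega)\,S(j-1,\omega)/S(j+1,\omega)$ (this is exactly the identity already used in \eqref{logint} of Lemma \ref{lemmadrift}). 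Consequently the product telescopes: for $m\ge 1$,
\[
e^{V_{\hat\omega}(m)}=\prod_{j=1}^m\rho_j(\hat\omega)=\prod_{j=1}^m \rho_j(\omega)\frac{S(j-1,\omega)}{S(j+1,\omega)}
= e^{V_\omega(m)}\cdot\frac{S(0,\omega)S(1,\omega)}{S(m,\omega)S(m+1,\omega)}.
\]
Since $S(0,\omega)=0$ this naive form is degenerate, so I would instead telescope the ratio $e^{V_{\hat\omega}(m)}/e^{V_{\hat\omega}(m')}$ for $m>m'\ge 1$, which gives the clean identity $e^{V_{\hat\omega}(m)}=e^{V_\omega(m)}\,e^{-V_\omega(b)}\,S(b,\omega)S(b+1,\omega)/(S(m,\omega)S(m+1,\omega))$ after a shift; equivalently, working with $\theta^b\hat\omega$ directly, $V_{\theta^b\hat\omega}(i)=V_\omega(b+i)-V_\omega(b)+\log\frac{S(b+1,\omega)S(b,\omega)}{S(b+i,\omega)S(b+i+1,\omega)}\cdot(\text{correction for }i=0)$.

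The cleanest route, which I would actually write up, is to exponentiate and recognize a telescoping sum. Using $\rho_j(\hat\omega)=\rho_j(\omega)S(j-1,\omega)/S(j+1,\omega)$ we get for the shifted environment, writing everything in terms of $\omega$,
\[
e^{V_{\theta^b\hat\omega}(i)}=\prod_{j=b+1}^{b+i}\rho_j(\hat\omega)=e^{V_\omega(b+i)-V_\omega(b)}\cdot\frac{S(b,\omega)S(b+1,\omega)}{S(b+i,\omega)S(b+i+1,\omega)}.
\]
Therefore
\[
S(a,\theta^b\hat\omega)=\sum_{i=0}^{a-1}e^{V_{\theta^b\hat\omega}(i)}
=\frac{S(b,\omega)S(b+1,\omega)}{e^{V_\omega(b)}}\sum_{i=0}^{a-1}\frac{e^{V_\omega(b+i)}}{S(b+i,\omega)S(b+i+1,\omega)}.
\]
The final step is to evaluate the last sum in closed form by observing that $e^{V_\omega(m)}=S(m+1,\omega)-S(m,\omega)$, so each summand is a telescoping difference:
\[
\frac{e^{V_\omega(m)}}{S(m,\omega)S(m+1,\omega)}=\frac{S(m+1,\omega)-S(m,\omega)}{S(m,\omega)S(m+1,\omega)}=\frac{1}{S(m,\omega)}-\frac{1}{S(m+1,\omega)},
\]
so $\sum_{i=0}^{a-1}(\cdots)=\frac{1}{S(b,\omega)}-\frac{1}{S(a+b,\omega)}$, which gives the claimed formula.

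The only mild obstacle is bookkeeping with the index shift and the boundary term at $i=0$ (where $V_{\theta^b\hat\omega}(0)=0$ must match the product formula, which it does since the empty product is $1$ and $e^{V_\omega(b)-V_\omega(b)}S(b,\omega)S(b+1,\omega)/(S(b,\omega)S(b+1,\omega))=1$), together with verifying the identity $\rho_j(\hat\omega)=\rho_j(\omega)S(j-1,\omega)/S(j+1,\omega)$ carefully from \eqref{eq-hatomega} — but this is exactly the computation already performed in the proof of Lemma \ref{lemmadrift}, so it can be quoted. No use of ellipticity or stationarity is needed; this is a purely algebraic identity valid for every fixed environment.
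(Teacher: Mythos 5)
Your proof is correct and takes essentially the same route as the paper: you derive the identity $\rho_j(\hat\omega)=\rho_j(\omega)S(j-1,\omega)/S(j+1,\omega)$ (also noted in Lemma~\ref{lemmadrift}), telescope the product to express $e^{V_{\theta^b\hat\omega}(i)}$ in terms of $\omega$, and then recognize $e^{V_\omega(m)}/(S(m,\omega)S(m+1,\omega))$ as the telescoping difference $1/S(m,\omega)-1/S(m+1,\omega)$, exactly as in the paper's proof (cf.\ equation \eqref{Vhat1}). The brief detour you describe through the degenerate $S(0,\omega)=0$ form is unnecessary, but the version you actually write down is the paper's argument.
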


\begin{proof}
 By the definition of $S$ we have 
 \[
S(a,\theta^b\hat{\omega})=\sum \limits_{i=0}^{a-1} e^{V_{\theta^b\hat{\omega}}(i)}.
 \]
 Since $\rho_x(\hat{\omega})=\rho_x(\omega)S(x-1,\omega)/S(x+1,\omega) $,
 we get
 \begin{align}
     \nonumber V_{\theta^b\hat{\omega}}(i)&=\sum \limits_{j=1}^{i} \log \rho_j(\theta^b\hat{\omega})
    =\sum \limits_{j=1}^{i} \log \rho_{j+b}(\hat{\omega})\\
     \nonumber&=\sum \limits_{j=1}^{i} \Big(\log \rho_{j+b}({\omega})+\log \frac{S(j+b-1,\omega)}{S(j+b+1,\omega)}\Big)\\
     \nonumber&=\sum \limits_{j=b+1}^{i+b} \log \rho_{j}({\omega})+\log \frac{S(b-1,\omega)S(b,\omega)}{S(i+b-1,\omega)S(i+b,\omega)}\\
     \label{Vhat1}&=V_{\omega}(i+b)-V_{\omega}(b)+\log \frac{S(b+1,\omega)S(b,\omega)}{S(i+b+1,\omega)S(i+b,\omega)}.
 \end{align}
 Hence
 \begin{align*}
     S(a,\theta^b\hat{\omega})=\frac{S(b+1,\omega)S(b,\omega)}{e^{V_{\omega}(b)}}\sum \limits_{i=0}^{a-1} \frac{e^{V_{\omega}(i+b)}}{S(i+b+1,\omega)S(i+b,\omega)},
 \end{align*}
but observe that 
\begin{align*}
\frac{e^{V_{\omega}(i+b)}}{S(i+b+1,\omega)S(i+b,\omega)}&=\frac{1}{S(i+b,\omega)}-\frac{1}{S(i+b+1,\omega)}.
\end{align*}
Thus we have a telescopic sum and
 \begin{align*}
     S(a,\theta^b\hat{\omega})&= \frac{S(b+1,\omega)S(b,\omega)}{e^{V_{\omega}(b)}}\Big(\frac{1}{S(b,\omega)}-\frac{1}{S(a+b,\omega)}\Big),
 \end{align*}
 concluding the proof.
\end{proof}

\begin{lemma}\label{lemmaptau} Suppose 
  $ \omega_i\in [\kappa,1-\kappa]$ for all $i$.
  Then, for any $a>0$ so that $a\log M$ is an integer,
  \begin{equation}
    \label{eq-mrs3}
\frac{1}{k}\sum \limits_{i=1}^{xk-1} \pr_i^{\hat{\omega}}[\tau_{i+1}\geq M] \leq  xe^{-M^{1+a\log \kappa}/(a\log M)}+\frac{C_\kappa}{k}\sum \limits_{i=1}^{xk-1} \xi_i(a\log M,\omega).
\end{equation}
Moreover, if $\eta\in M_1^e(\Sigma)$ then
\begin{equation}
  \label{eq-mrs4}
  \lim \limits_{M\to \infty}\lim \limits_{k\to \infty}\frac{1}{k}\sum \limits_{i=1}^{xk-1} \pr_i^{\hat{\omega}}[\tau_{i+1}\geq M]=0, \qquad \eta-\mbox{a.s.}.
\end{equation}
\end{lemma}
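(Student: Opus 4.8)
The plan is to estimate, for each fixed $i$, the single tail $\pr_i^{\hat\omega}[\tau_{i+1}\geq M]$ by splitting according to how far the conditioned walk backtracks before reaching $i+1$, to sum over $1\le i\le xk-1$ and divide by $k$ to get \eqref{eq-mrs3}, and finally to derive \eqref{eq-mrs4} from \eqref{eq-mrs3} via the bound $\xi\le\bar\xi$ and Birkhoff's ergodic theorem. Set $w:=a\log M$ (an integer by hypothesis). First I would write
\[
\pr_i^{\hat\omega}[\tau_{i+1}\geq M]\;\leq\;\pr_i^{\hat\omega}\big[\tau_{i+1}\geq M,\ \tau_{i-w}>\tau_{i+1}\big]\;+\;\pr_i^{\hat\omega}\big[\tau_{i-w}<\tau_{i+1}\big].
\]

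For the first term, on the event $\{\tau_{i-w}>\tau_{i+1}\}$ the walk remains in the window $\{i-w+1,\dots,i\}$ up to time $\tau_{i+1}$. Since $\hat\omega_x\geq\omega_x\geq\kappa$ for every $x$ (as used in the proof of Lemma~\ref{lemmadrift}), from any site of this window the walk reaches $i+1$ within the next $w$ steps with probability at least $\kappa^{w}$ (move right $w$ times). Chopping $[0,M)$ into $\approx M/w$ disjoint blocks of length $w$ and applying the strong Markov property, the first term is at most $(1-\kappa^{w})^{\lfloor M/w\rfloor}$; since $\kappa^{w}=M^{a\log\kappa}$ and $M/w=M/(a\log M)$, this is the asserted term $e^{-M^{1+a\log\kappa}/(a\log M)}$ (up to harmless block-counting adjustments).

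For the second term, note it is $0$ when $i\leq w$ because the $\hat\omega$-walk never visits $0$; for $i>w$, gambler's ruin \eqref{lemmaS} applied in the environment $\hat\omega$ gives
\[
\pr_i^{\hat\omega}[\tau_{i-w}<\tau_{i+1}]=1-\frac{S(w,\theta^{i-w}\hat\omega)}{S(w+1,\theta^{i-w}\hat\omega)}=\frac{e^{V_{\theta^{i-w}\hat\omega}(w)}}{S(w+1,\theta^{i-w}\hat\omega)}.
\]
Now I would invoke Lemma~\ref{lemmaShat} (whose proof supplies the relevant telescoping sum) for the denominator and the identity \eqref{Vhat1} for the numerator to rewrite both in terms of $\omega$; after the ensuing cancellations one is left with
\[
\frac{e^{V_\omega(i)}\,S(i-w,\omega)}{S(i,\omega)\sum_{j=i-w}^{i}e^{V_\omega(j)}}\;=\;\frac{1}{\rho_{i+1}(\omega)}\,\xi_i(w,\omega),
\]
the last equality coming from the definition \eqref{defxi} of $\xi$ by the same reduction, together with $e^{V_\omega(i)}/e^{V_\omega(i+1)}=\rho_{i+1}(\omega)^{-1}$. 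By uniform ellipticity $\rho_{i+1}(\omega)\geq\kappa/(1-\kappa)$, so the second term is at most $C_\kappa\,\xi_i(w,\omega)$. Adding the two bounds, summing over $1\leq i\leq xk-1$, dividing by $k$ and using $(xk-1)/k\leq x$ then gives \eqref{eq-mrs3}. I expect this exact identification of the backtracking probability with $\rho_{i+1}(\omega)^{-1}\xi_i(w,\omega)$ — i.e.\ matching the cancelled expression to \eqref{defxi} — to be the main computational step.

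For \eqref{eq-mrs4} I would fix $a\in(0,1/\log(1/\kappa))$, so that $1+a\log\kappa>0$, restrict $M$ to values with $a\log M\in\mathbb Z$, and set $w=a\log M$. Combining \eqref{eq-mrs3} with the bound $\xi_i(w,\omega)\leq\bar\xi(w,\theta^{i-w}\omega)$ recorded just after \eqref{defxibar} yields
\[
\frac1k\sum_{i=1}^{xk-1}\pr_i^{\hat\omega}[\tau_{i+1}\geq M]\;\leq\;x\,e^{-M^{1+a\log\kappa}/(a\log M)}+\frac{C_\kappa}{k}\sum_{i=1}^{xk-1}\bar\xi(w,\theta^{i-w}\omega).
\]
Since $\bar\xi(w,\cdot)\leq W(w+1,\cdot)\leq C_\kappa$ by Lemma~\ref{Wbound}, it lies in $L^1(\eta)$, and Birkhoff's theorem (using ergodicity of $\eta$) shows the last average converges, $\eta$-a.s.\ as $k\to\infty$, to $C_\kappa x\int\bar\xi(w,\omega)\,\eta(d\omega)$. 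It then remains to see that $\int\bar\xi(w,\omega)\,\eta(d\omega)\to0$ as $w\to\infty$. For this I would use $\bar\xi(w,\omega)\leq W(w+1,\omega)=\rho_{w+1}(\omega)\,\pr_w^{\omega}[\tau_0<\tau_{w+1}]\leq C_\kappa\,\pr_w^{\omega}[\tau_0<\tau_{w+1}]$, whence by stationarity $\int\bar\xi(w,\omega)\,\eta(d\omega)\leq C_\kappa\,\pr^{a,\eta}_0[\tau_{-w}<\tau_1]$, and this last quantity decreases to $C_\kappa\,\pr^{a,\eta}_0[\tau_1=\infty]$, which vanishes whenever the RWRE is recurrent or transient to the right under $\eta$ (equivalently $E_\eta\log\rho_0\leq0$). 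In the remaining case $E_\eta\log\rho_0>0$ one has $S(-\infty,\omega)<\infty$ while $S(w,\omega)\uparrow\infty$ $\eta$-a.s., so $\bar\xi(w,\omega)\leq C_\kappa\,S(-\infty,\omega)/S(w,\omega)\to0$ $\eta$-a.s., and bounded convergence again gives $\int\bar\xi(w,\omega)\,\eta(d\omega)\to0$. Letting $k\to\infty$ and then $M\to\infty$ in the displayed inequality yields \eqref{eq-mrs4}. A secondary difficulty is exactly this $L^1(\eta)$-decay of $\bar\xi(w,\cdot)$, which is what forces the small case distinction above.
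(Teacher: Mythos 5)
Your proof is correct. For the bound \eqref{eq-mrs3} you follow essentially the same route as the paper: the same decomposition of $\pr_i^{\hat\omega}[\tau_{i+1}\geq M]$ according to whether the walk exits the window $[i-w+1,i]$ to the right or reaches $i-w$ first, the same blocking bound $(1-\kappa^w)^{\lfloor M/w\rfloor}$ for the first term, and the same algebraic reduction (via Lemma~\ref{lemmaShat} and \eqref{Vhat1}) of the backtracking probability. Your observation that the backtracking term equals exactly $\rho_{i+1}(\omega)^{-1}\xi_i(w,\omega)$ is a slightly cleaner accounting than the paper's; the paper only records the final $C_\kappa\xi_i$ bound, absorbing the $\rho_{i+1}^{-1}$ factor via ellipticity at an intermediate step. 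You also usefully make explicit that the backtracking probability vanishes for $i\leq w$ (since the $\hat\omega$-walk never hits $0$), a degenerate range that the paper glosses over.

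For \eqref{eq-mrs4} your argument is a genuinely different and shorter route. The paper bounds $\xi_i$ by $C_\kappa/(1+W_i S_i)$, splits on the size of $W_i(a\log M)$ with a threshold $M^{-\mu/2}$, and introduces truncation variables $N_1,N_2$ to obtain quantitative tail control (for $\mu>0$), while for $\mu\le0$ it uses the reversed-environment identity \eqref{Wintegral} plus dominated convergence; this quantitative machinery is then recycled in the proof of Lemma~\ref{lemmasigma} for stationary $\eta$. You instead pass immediately to $\xi_i(w,\omega)\le\bar\xi(w,\theta^{i-w}\omega)$, apply Birkhoff, and reduce the outer $M\to\infty$ limit to showing $\int\bar\xi(w,\omega)\,\eta(d\omega)\to0$ — effectively a self-contained proof of the ergodic case of Lemma~\ref{lemmasigma}. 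Your case $E_\eta\log\rho_0\le0$ via $W(w+1,\omega)=\rho_{w+1}(\omega)\pr_w^\omega[\tau_0<\tau_{w+1}]$, stationarity, and the monotone decrease of the events $\{\tau_{-w}<\tau_1\}$ to $\{\tau_1=\infty\}$ is clean and correct (under $\eta$ with $E_\eta\log\rho_0\le0$ the RWRE is recurrent or transient to the right, so $\pr^{a,\eta}_0[\tau_1=\infty]=0$). Your case $E_\eta\log\rho_0>0$ via $\bar\xi(w,\omega)\le C_\kappa S(-\infty,\omega)/S(w,\omega)$, pointwise a.s.\ convergence to $0$ (finiteness of $S(-\infty,\omega)$ and divergence of $S(w,\omega)$ both following from the ergodic theorem applied to $V_\omega$), and bounded convergence is also correct, avoiding the $N_1,N_2$ bookkeeping. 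In short, your \eqref{eq-mrs4} argument is a legitimate simplification for ergodic $\eta$; what the paper's heavier version buys is a form of the estimate that it later reuses to extend the convergence $\int\bar\xi(L,\cdot)\,\eta\to0$ from ergodic to all stationary $\eta$ (Lemma~\ref{lemmasigma}), but that extension is not needed for Lemma~\ref{lemmaptau} itself.
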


\begin{proof} Fix $a>0$ and observe that
\begin{align}
    \nonumber&\pr_i^{\hat{\omega}}[\tau_{i+1}\geq M]\\
    \nonumber &=\pr_i^{\hat{\omega}}[\tau_{i+1}\geq M, \tau_{i+1}< \tau_{i-a\log M}]+\pr_i^{\hat{\omega}}[\tau_{i+1}\geq M, \tau_{i+1}> \tau_{i-a\log M}]\\
    \label{ptau1}&\leq \pr_i^{\hat{\omega}}[\tau_{\{i+1,i-a\log M\}}\geq M]+\pr_i^{\hat{\omega}}[\tau_{i+1}> \tau_{i-a\log M}].
\end{align}
We deal with each term in the right hand side of 
\eqref{ptau1} separately. Concerning the first one, note that
from any point $m$ inside the interval $[i-a\log M, i+1]$ we can exit in $a\log M$ steps to the right. 
Recalling that  $\hat \omega_i\geq \omega_i$, see
\eqref{eq-hatomega} and using the 
ellipticity bound, it holds that
\[
\pr_m^{\hat{\omega}}[\tau_{\{i+1,i-a\log M\}}\leq a\log M ] \geq \kappa^{a\log M},
\]
and therefore, by the Markov property,
\begin{equation}
  \label{eq-mrs5}
\pr_m^{\hat{\omega}}[\tau_{\{i+1,i-a\log M\}}\geq M ]\leq \Big(1- \kappa^{a\log M}\Big)^{M/(a\log M)}\leq e^{-M^{1+a\log \kappa}/(a\log M)}.
\end{equation}

Turning to the second term in the right hand side of \eqref{ptau1}, we have
\begin{align*}
    \pr_i^{\hat{\omega}}[\tau_{i+1}> \tau_{i-a\log M}]&=\pr_{a\log M}^{\theta^{i-a\log M}\hat{\omega}}[\tau_{a\log M+1}> \tau_{0}]\\
    &=1-\frac{S(a\log M,\theta^{i-a\log M}\hat{\omega})}{S(1+a\log M,\theta^{i-a\log M}\hat{\omega})}\\
    &=\frac{e^{V_{\theta^{i-a\log M}\hat{\omega}}(a\log M)}}{S(1+a\log M,\theta^{i-a\log M}\hat{\omega})}.
\end{align*}
By Lemma~\ref{lemmaShat} we have 
\begin{align*}
    &S(1+a\log M,\theta^{i-a\log M}\hat{\omega})=\frac{S(i-a\log M+1,\omega)S(i-a\log M,\omega)}{e^{V_{\omega}(i-a\log M)}}\\
    &\qquad \qquad \qquad \qquad \qquad\qquad \qquad\qquad \times \Big(\frac{1}{S(i-a\log M,\omega)}-\frac{1}{S(i+1,\omega)}\Big)\\
    &=\frac{S(i-a\log M+1,\omega)(S(i+1,\omega)-S(i-a\log M,\omega))}{e^{V_{\omega}(i-a\log M)}S(i+1,\omega)}.
\end{align*}
Also by \eqref{Vhat1} we have
\begin{align*}
&e^{V_{\theta^{i-a\log M}\hat{\omega}}(a\log M)}\\
&=e^{V_{\omega}(i)-V_{\omega}(i-a\log M)}\frac{S(i-a\log M+1,\omega)S(i-a\log M,\omega)}{S(i+1,\omega)S(i,\omega)}.
\end{align*}
Hence,
\begin{align*}
    &\pr_i^{\hat{\omega}}[\tau_{i+1}> \tau_{i-a\log M}]\\
		&=e^{V_{\omega}(i)-V_{\omega}(i-a\log M)}\frac{S(i-a\log M+1,\omega)S(i-a\log M,\omega)}{S(i+1,\omega)S(i,\omega)}\\
    &\quad \times \frac{e^{V_{\omega}(i-a\log M)}S(i+1,\omega)}{S(i-a\log M+1,\omega)(S(i+1,\omega)-S(i-a\log M,\omega))}\\
    &=e^{V_{\omega}(i)}\frac{S(i-a\log M,\omega)}{S(i,\omega)(S(i+1,\omega)-S(i-a\log M,\omega))}.
\end{align*}
Using \eqref{Sdecomp}, we have that 
$$S(i+1,\omega)=S(i-a\log M,\omega)+e^{V_{\omega}(i-a\log M)}S(a\log M+1,\theta^{i-a\log M}\omega),$$ we get
\begin{align*}
\nonumber    e^{V_{\omega}(i)}&\frac{S(i-a\log M,\omega)}{S(i,\omega)(S(i+1,\omega)-S(i-a\log M,\omega))}\\
\nonumber    &=\frac{S(i-a\log M,\omega)}{S(i-a\log M,\omega)+e^{V_{\omega}(i-a\log M)}S(a\log M,\theta^{i-a\log M}\omega)}\\
&\quad \times \frac{e^{V_{\theta^{i-a\log M}\omega}(a\log M)}}{S(a\log M+1,\theta^{i-a\log M}\omega)}\\
\nonumber &\leq \Big(\frac{1-\kappa}{\kappa}\Big)\frac{{W}(a\log M+1,\theta^{i-a\log M}\omega)}{1+W(i-a\log M ,\omega){S}(a\log M,\theta^{i-a\log M}\omega)}\\
\nonumber &=C_{\kappa}\xi_i(a\log M,\omega),
\end{align*}
where we recall the constant $C_\kappa$ and the random variable $\xi_i(a\log M,\omega)$, see \eqref{eq-ckappa} and \eqref{defxi}.
Substituting this and \eqref{eq-mrs5}
in \eqref{ptau1} yields \eqref{eq-mrs3}.

Turning to the proof of \eqref{eq-mrs4}, suppose that $\mu=\int \log \rho d\eta > 0$. According to Lemma~\ref{Wbound},
\[
\xi_i(a\log M,\omega)\leq \frac{C_\kappa}{1+W(i-a\log M ,\omega){S}(a\log M,\theta^{i-a\log M}\omega)}.
\]
Abbreviate
$$W_i(a\log M)=W(i-a\log M ,\omega), \quad
S_i(a\log M)={S}(a\log M,\theta^{i-a\log M}\omega).$$
For any $J$ we have 
\begin{align}
\label{doublesum}
    \frac{1}{k}&\sum \limits_{i=J}^{xk-1}\xi_i(a\log M,\omega)\\
    \nonumber&=\frac{1}{k}\sum \limits_{i=J}^{xk-1}\xi_i(a\log M,\omega)\ind[W_i(a\log M)\geq M^{-\mu/2}]\\
\nonumber     &\quad \quad+\frac{1}{k}\sum \limits_{i=J}^{xk-1}\xi_i(a\log M,\omega)\ind [W_i(a\log M)<M^{-\mu/2}]\\
    &\leq \frac{C_\kappa}{k}\sum \limits_{i=J}^{xk-1}\Big(1+M^{-\mu/2}S_i(a\log M) \Big)^{-1}+\frac{C_\kappa}{k}\sum \limits_{i=J}^{xk-1}\ind [W_i(a\log M)<M^{-\mu/2}].\nonumber
\end{align}
While the first term in the right hand side of \eqref{doublesum} is a
Ces\'aro average to which the ergodic theorem can be applied,
the second term
needs more work. Observe that 
\begin{align}
   \nonumber\frac{1}{W_i(a\log M)}&=\frac{\sum \limits_{j=0}^{i-a\log M-1}
   e^{V_\omega(j)}}{e^{V_\omega (i-a\log M)}}
   =\sum \limits_{j=0}^{i-a\log M-1}e^{V_\omega(j)-V_\omega (i-a\log M)}\\
   &\leq \sum \limits_{j=-\infty}^{-1}e^{V_{\theta^{i-a\log M}\omega}(j)}
    \label{Winverse}=S(-\infty,\theta^{i-a\log M}\omega).
\end{align}
Thus,
\begin{align}
    \label{ergosum}\frac{1}{k}\sum \limits_{i=J}^{xk-1}\ind [W_i(a\log M)<M^{-\mu/2}]\leq \frac{1}{k}\sum \limits_{i=J}^{xk-1}\ind [S(-\infty,\theta^{i-a\log M}\omega)>M^{\mu/2}],
\end{align}
and now we can apply the ergodic theorem to 
the right hand side of \eqref{ergosum}. 
Taking the limit in $k$ we obtain 
\begin{align}
    \nonumber&\lim \limits_{k\to \infty}\frac{1}{xC_\kappa^2k}\sum \limits_{i=1}^{xk-1}\pr_i^{\hat{\omega}}[\tau_{i+1}> \tau_{i-a\log M}]\\
    \nonumber&\leq \lim \limits_{k\to \infty}\frac{1}{k}\sum \limits_{i=J}^{xk-1}\Big( \Big(1+M^{-\mu/2}S_i(a\log M) \Big)^{-1}+\ind [S(-\infty,\theta^{i-a\log M}\omega)>M^{\mu/2}]\Big)\\
    \label{twoterms}&=\int \Big(1+\e'M^{-\mu/2}S(a\log M,\omega) \Big)^{-1} \eta(d\omega)+  \eta(S(-\infty,\cdot)>M^{\mu/2}).
\end{align}
To take the limit in $M$ we 
deal with the terms separately. For the intergral term 
in the right hand side of \eqref{twoterms},
define $N_1(\omega)=\inf \{n: V_\omega(i)/i \geq (3/4)\mu \mbox{ for }i\geq n \}$ 
and observe that
according to the ergodic theorem,  $\eta (N_1(\cdot)>K)\to 0$ as $K\to\infty$.
Now,
\begin{align}
    \nonumber\int &\Big(1+M^{-\mu/2}S(a\log M,\omega) \Big)^{-1} \eta(d\omega)\leq \int \Big(1+M^{-\mu/2}e^{V_\omega(a\log M-1)} \Big)^{-1}\eta(d\omega)\\
    \nonumber&=\int \frac{\ind[N_1(\omega)\leq a\log M]}{1+M^{-\mu/2}e^{V_\omega(a\log M-1)}}\eta(d\omega)+\int \frac{\ind[N_1(\omega)> a\log M]}{1+M^{-\mu/2}e^{V_\omega(a\log M-1)}}\eta(d\omega)\\
    \label{Wbound2}&\leq\frac{1}{1+M^{\mu/4}}+\eta(N_1(\cdot)> a\log M-1),
\end{align}  
which tends to zero as $M\to \infty$.

For the second term in the right hand side of  \eqref{twoterms},
define $N_2(\omega)=\inf \{n: V_\omega(-i)/i \leq -(3/4)\mu \mbox{ for }i\geq n \}$ and observe that for $\alpha_4=(1-e^{-(3/4)\mu})^{-1}$ it holds that
\begin{align}
\nonumber S(-\infty,\omega)&=\sum \limits_{j=1}^{\infty}e^{V_\omega(-j)}
=\sum \limits_{j=1}^{N_2(\omega)-1}e^{V_\omega(-j)}+\sum \limits_{j=N_2(\omega)}^{\infty}e^{V_\omega(-j)}\\
\nonumber&\leq \frac{C_\kappa^{N_2(\omega)}-1}{C_\kappa-1}+\frac{e^{-N_2(\omega)(3/4)\mu}}{1-e^{-(3/4)\mu}}
\leq \frac{C_\kappa^{N_2(\omega)}}{C_\kappa-1}+\alpha_4,
\end{align}
and for some constant $\alpha_5$ depending only on $\kappa$ we have
\begin{align}
    \nonumber\eta(S(-\infty,\omega)>M^{\mu/2})&\leq \eta\Big(\frac{C_\kappa^{N_2(\omega)}}{C_\kappa-1}+\alpha_4>M^{\mu/2} \Big)\\
    \label{boundSminus}&= \eta\Big(N_2(\cdot)>\alpha_5+\frac{\log(M^{\mu/2}-\alpha_4)}{\log C_\kappa} \Big),
\end{align}
which tends to zero as $M\to \infty$ due to the ergodic theorem. We
conclude that if $\mu=\mu(\eta) > 0$ then
\[
\lim \limits_{M\to \infty} \lim \limits_{k\to \infty} \frac{1}{k}\sum \limits_{i=1}^{xk-1} \pr_i^{\hat{\omega}}[\tau_{i+1}> \tau_{i-a\log M}] =0, \quad 
\eta-\mbox{a.s.},
\]
completing the proof of \eqref{eq-mrs4} in that case.

Now consider $\mu=\int \log \rho_0 d\eta \leq 0$ and observe that 
\begin{equation}\label{eq-mrs6}
  \xi_i(a\log M,\omega)\leq {W}(a\log M+1,\theta^{i-a\log M}\omega).
\end{equation}
By the ergodic theorem and \eqref{eq-mrs6} we get that, $\eta$-a.s.,
\begin{align}
    \nonumber\lim \limits_{k\to \infty }\frac{1}{k}\sum \limits_{i=J}^{xk-1}\xi_i(a\log M,\omega)&\leq   \lim \limits_{k\to \infty }\frac{1}{k}\sum \limits_{i=J}^{xk-1}{W}(a\log M+1,\theta^{i-a\log M}\omega)\\
    \label{ergoW}&=x \int {W}(a\log M+1,\omega) \eta(d\omega).
\end{align}
Now observe that for any  $K\geq 1$,
\begin{align*}
    {W}(K+1,\omega)&=\frac{e^{V_\omega(K+1)}}{\sum \limits_{j=0}^{K}e^{V_\omega (j)}}
    =\!\Big(\sum \limits_{j=0}^{K}e^{V_\omega (j)-V_\omega(K+1)} \Big)^{-1}
\!  \!\! = \Big(S(-K-1,\theta^{K+1}\omega) \Big)^{-1}\!\!\!.
\end{align*}
By  the stationarity of the environment, for each $K$,
$S(-K-1,\omega)$ has the same distribution as $S(K+2,r(\overline{\omega}))-1$
where $r(\overline{\omega})$ was defined in \eqref{eq-mrs2}, and therefore
we obtain that
\begin{align}
    \nonumber\int {W}(a\log M+1,\omega) \eta(d\omega)&=\int \Big(S(-a\log M-1,\theta^{a\log M+1}\omega) \Big)^{-1} \eta(d\omega)\\
    \label{Wintegral}&=\int \frac{1}{S(a\log M+2,r(\overline{\omega}))-1} \eta(d\omega).
\end{align}
From the
proof of Theorem~2.1.1. from \cite{coursezeitouni} (originally \cite[Theorem (1.7)]{solomon}) we have that $\mu \leq 0$ implies that
$S(n,\omega)\to \infty$ $\eta$-a.s. as $n\to \infty$. Observe that $\overline{\omega}$ is an environment with 
\[
\int \log \rho_0(\overline{\omega}) \eta(d\omega)=
-\int \log \rho_0 \eta(d\omega) \geq 0,
\]
hence $S(a\log M+2,r(\overline{\omega}))\to_{M\to\infty} \infty$,
$\eta$-a.s.
Since $S(n,\omega)\geq 1+C_\kappa^{-1}$, then the function integrated in \eqref{Wintegral} is bounded by $C_\kappa$ and therefore by the
dominated convergence theorem we get 
\begin{align*}
    \lim \limits_{M\to \infty} \int {W}(a\log M+1,\omega) \eta(d\omega)=\lim \limits_{M\to \infty}\int \frac{1}{S(a\log M+2,r(\overline{\omega}))-1} \eta(d\omega) = 0,
\end{align*}
so for $\mu \leq 0$ it holds that 
\[
\lim \limits_{M\to \infty} \lim \limits_{k\to \infty} \frac{1}{k}\sum \limits_{i=1}^{xk-1} \pr_i^{\hat{\omega}}[\tau_{i+1}> \tau_{i-a\log M}] =0.
\]
Having proved \eqref{eq-mrs4} also in the case $\mu=\mu(\eta)\leq 0$,
the proof of the lemma is complete.
\end{proof}

\begin{lemma}\label{lemmaapproxPhat} Let $\omega_i\in[\kappa,1-\kappa]$ for all
  $i$.
 Then there exist  $\alpha_6,\alpha_7>0$ depending only on $\kappa$, $x$ so that
 for any $M+L<J$,
\begin{align}
  \label{deltaPM}    |\hat{I}_J(x,k,\omega)-\hat{I}_{J,M}(x,k,\omega)|\leq \frac{\alpha_6}{k}\sum \limits_{i=J}^{xk-1}\pr^{\hat{\omega}}_i[\tau_{i+1}\geq M],
\end{align}
\begin{align}
\label{deltaLM}    |\hat{I}_{J,M}^L(x,k,\omega)-\hat{I}_J^L(x,k,\omega)|\leq \frac{\alpha_6}{k} \sum \limits_{i=J}^{xk-1}\pr^{\hat{\omega}^L}_i[\tau_{i+1}\geq M],
\end{align}
\begin{align}
  \label{deltaMM}    |\hat{I}_{J,M}(x,k,\omega)-\hat{I}_{J,M}^L(x,k,\omega)|\leq \frac{\alpha_7M}{k}\sum\limits_{i=J}^{xk-1}\sum \limits_{i=J-M}^{J+M} \xi_{i}(L-1,\omega).
\end{align}
Moreover if $J>L$ then
\begin{align*}
    \frac{1}{k} \sum \limits_{i=J}^{xk-1}\pr^{\hat{\omega}^L}_i[\tau_{i+1}\geq M]&\leq \frac{1}{k}\sum \limits_{i=J}^{xk-1}\pr^{\hat{\omega}}_i[\tau_{i+1}\geq M].
\end{align*}
\end{lemma}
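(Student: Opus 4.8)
The four claimed estimates all follow from the same elementary device: bounding differences of suprema of linear-in-$\lambda$ functionals by suprema of the pointwise differences, together with the explicit bounds on the optimizing $\lambda$ already established in Lemma \ref{lemmaPhat} (the bound \eqref{lambdabound}, which confines $\lambda^*_M(x)$ to a compact interval $[\log(\kappa(1-x)/x),0]$, depending only on $\kappa,x$). Concretely, if $G_1,G_2$ are two functions of the form $G_j(\lambda)=\lambda-\frac1k\sum_{i=J}^{xk-1}\log\psi^{(j)}_i(\lambda)$ with each $\psi^{(j)}_i$ log-convex in $\lambda$ and the relevant suprema attained in a common compact set $[\lambda_-,0]$, then $|\sup_{\lambda\le 0}G_1-\sup_{\lambda\le 0}G_2|\le \sup_{\lambda\in[\lambda_-,0]}\frac1k\sum_{i=J}^{xk-1}|\log\psi^{(1)}_i(\lambda)-\log\psi^{(2)}_i(\lambda)|$. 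So the task reduces, in each case, to estimating $|\log\psi^{(1)}_i(\lambda)-\log\psi^{(2)}_i(\lambda)|$ for $\lambda$ in the compact set.

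First I would prove \eqref{deltaPM}. Here $\psi^{(1)}_i(\lambda)=\phi_i(\hat\omega,\lambda)=\ex_i^{\hat\omega}[e^{\lambda\tau_{i+1}}]$ and $\psi^{(2)}_i(\lambda)=\phi_{i,M}(\hat\omega,\lambda)=\ex_i^{\hat\omega}[e^{\lambda\tau_{i+1}}\ind[\tau_{i+1}<M]]$. Since $\lambda\le 0$, $0\le \phi_i-\phi_{i,M}=\ex_i^{\hat\omega}[e^{\lambda\tau_{i+1}}\ind[\tau_{i+1}\ge M]]\le \pr_i^{\hat\omega}[\tau_{i+1}\ge M]$, while $\phi_{i,M}(\lambda)\ge \hat\omega_i e^{\lambda}\ge \kappa e^{\lambda_-}$ is bounded below uniformly on the compact $\lambda$-set; hence $|\log\phi_i-\log\phi_{i,M}|\le \alpha_6\pr_i^{\hat\omega}[\tau_{i+1}\ge M]$ for a constant $\alpha_6=\alpha_6(\kappa,x)$, and summing gives \eqref{deltaPM}. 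The bound \eqref{deltaLM} is literally the same computation with $\hat\omega$ replaced by $\hat\omega^L$ everywhere (using $\hat\omega^L_i\ge\omega_i\ge\kappa$, which still provides the lower bound $\phi^M_{i,L}(\lambda)\ge\kappa e^{\lambda_-}$); the key point is that the $\lambda^*$-confinement argument of Lemma \ref{lemmaPhat} only used ellipticity, so it applies verbatim to $\hat\omega^L$.

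The estimate \eqref{deltaMM} is the main obstacle, because it compares two \emph{different environments}, $\hat\omega$ and $\hat\omega^L$, rather than a truncation of a single one. Here $\psi^{(1)}_i(\lambda)=\phi_{i,M}(\hat\omega,\lambda)$ and $\psi^{(2)}_i(\lambda)=\hat\phi^M_L(\lambda,\theta^i\omega)=\phi_{i,M}(\hat\omega^L,\lambda)$ (the rate functions $\hat I_{J,M}$ and $\hat I^L_{J,M}$ both truncate at $M$). The plan is to note that $\tau_{i+1}<M$ under $\pr_i$ means the walk stays in $[i-M, i+1]$, so the law of $(\tau_{i+1},\ind[\tau_{i+1}<M])$ depends only on the environment coordinates in that window; since $\hat\omega_j=\omega_jS(j+1,\omega)/S(j,\omega)$ and $\hat\omega^L_j=\omega_jS(L+1,\theta^{j-L}\omega)/S(L,\theta^{j-L}\omega)$, and $S(j+1,\omega)/S(j,\omega)=1+W(j,\omega)^{-1}$ can be compared to the truncated version using the telescoping/$\xi$-identity already derived (the inequality $\xi_n(i,\omega)\le\bar\xi(i,\theta^{n-i}\omega)$ and the decomposition \eqref{Sdecomp}), one shows that the two environments differ at site $j$ by an amount controlled by $\xi_j(L-1,\omega)$, uniformly. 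A Lipschitz estimate on $\log\ex^{\tilde\omega}_i[e^{\lambda\tau_{i+1}}\ind[\tau_{i+1}<M]]$ as a function of the $2M+1$ relevant coordinates $\tilde\omega_{i-M},\dots,\tilde\omega_{i+1}$, valid uniformly for $\lambda$ in the compact set and with a constant $\alpha_7(\kappa,x)$ absorbing the $M$-dependence into the prefactor $\alpha_7M$, then yields $|\log\phi_{i,M}(\hat\omega,\lambda)-\log\phi_{i,M}(\hat\omega^L,\lambda)|\le \alpha_7 M\sum_{j=i-M}^{i+M}\xi_j(L-1,\omega)$; summing over $i$ from $J$ to $xk-1$ gives \eqref{deltaMM}. (I would need to double-check the exact index range of the inner sum in the statement, which as written reads $\sum_{i=J-M}^{J+M}$ and should presumably be the $M$-window around the running index.)

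Finally, the last displayed inequality, $\frac1k\sum_{i=J}^{xk-1}\pr^{\hat\omega^L}_i[\tau_{i+1}\ge M]\le\frac1k\sum_{i=J}^{xk-1}\pr^{\hat\omega}_i[\tau_{i+1}\ge M]$ for $J>L$, follows from a monotonicity/coupling argument: by Lemma \ref{lemma2}, $\hat\omega^L_i\ge\hat\omega^{L+1}_i\ge\cdots\ge\hat\omega_i$ for each $i$ (more precisely $\hat\omega^L_i\ge\hat\omega_i$ whenever the window is within range, which is where $J>L$ enters), so the walk in environment $\hat\omega^L$ has uniformly larger rightward transition probabilities at every site in $[J-M,xk]$ than the walk in environment $\hat\omega$. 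A standard monotone coupling of nearest-neighbour walks then gives that, started at $i$, the hitting time $\tau_{i+1}$ is stochastically smaller under $\pr^{\hat\omega^L}_i$ than under $\pr^{\hat\omega}_i$, hence $\pr^{\hat\omega^L}_i[\tau_{i+1}\ge M]\le\pr^{\hat\omega}_i[\tau_{i+1}\ge M]$ termwise, and summing finishes the proof. I expect the coupling step to be routine and the Lipschitz estimate behind \eqref{deltaMM} — tracking how the $\xi_j(L-1,\cdot)$ bound on the environment discrepancy propagates through the (truncated) moment generating function — to be where the real work lies.
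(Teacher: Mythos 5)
Your proposal captures the essential structure and is correct in spirit, but it diverges from the paper's method in two places that are worth highlighting.

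For \eqref{deltaPM} and \eqref{deltaLM}, you propose the symmetric device $|\sup G_1-\sup G_2|\le\sup|G_1-G_2|$ and then invoke the confinement \eqref{lambdabound} to restrict to a compact $\lambda$-interval. The paper instead uses an asymmetric, one-sided argument: one inequality ($\hat I^L_J\le\hat I^{L,M}_J$) is immediate since truncating the MGF can only make it smaller, and for the other direction the paper simply substitutes the truncated optimizer $\lambda^*_{M,L}$ into the untruncated functional and estimates the error using $\log(1+u)\le u$, the lower bound $\hat\phi^M_L(\lambda,\theta^i\omega)\ge\omega_ie^\lambda\ge\kappa e^\lambda$, and the elementary observation that $e^{\lambda^*_{M,L}(M-1)}\le 1$ for $\lambda^*_{M,L}\le 0$. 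Notably, for these two estimates the paper does not actually need the lower bound on $\lambda^*$ at all — only that $\lambda^*\le 0$. Your argument also closes without it once you keep the $e^{\lambda(M-1)}$ factor rather than splitting it; so both routes are fine, yours just carries a slightly heavier invocation of \eqref{lambdabound} than is strictly necessary here.

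The real difference is in \eqref{deltaMM}, where the comparison is between two genuinely different environments. You propose a deterministic Lipschitz estimate on $\log\phi_{i,M}$ as a function of the $2M+1$ coordinates $\tilde\omega_{i-M},\dots,\tilde\omega_{i+1}$, with the coordinate discrepancy controlled by $\xi_j(L-1,\omega)$. The paper instead constructs an explicit monotone coupling of the two walks in $\hat\omega$ and $\hat\omega^L$ (using $\hat\omega^L_i\ge\hat\omega_i$, the same monotonicity you use for the final inequality), and observes that on the event $B^{\omega,L}_M$ where the coupling succeeds up to time $M$, the truncated MGFs agree exactly; on the complement they differ by at most $1$ since $\lambda\le 0$. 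The coupling-failure probability is then bounded by $M\sum_{j=i-M}^{i+M}(\hat\omega^L_j-\hat\omega_j)$, which is directly related to the $\xi$'s by a telescoping computation. This is considerably cleaner than a Lipschitz bound: it immediately yields both $\hat\phi^M_L\ge\phi_{i,M}$ and $\hat\phi^M_L-\phi_{i,M}\le H^\omega_i(L,M)$ with no need to track derivatives of $\log\phi_{i,M}$ in each coordinate and no ambiguity about how the window size $M$ enters the Lipschitz constant. You flag this step yourself as where the real work lies; the coupling is precisely the tool that keeps that work manageable, and it is also what the paper recycles for the very last inequality and for Lemma \ref{lemma2}. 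Finally, you are right to flag the index in the inner sum of \eqref{deltaMM}: comparing with the definition of $H^\omega_i(L,M)$ and the bound \eqref{Bo3} in the proof, the inner sum should run over the window $[i-M,i+M]$ around the outer running index $i$ rather than the fixed window $[J-M,J+M]$.
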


\begin{proof}
For clarity, 
we suppress the arguments $k$ and $\omega$ in $\hat I$'s. 
We start by proving  \eqref{deltaLM}. Consider 
\[
\lambda^*_{M,L}(x)=\argmax_{\lambda \leq 0} \Big\{\lambda -\frac{1}{k}\sum \limits_{i=J}^{xk-1} \log \hat\phi_L^M(\lambda, \theta^i\omega) \Big\}.
\]
As in the proof of Lemma \ref{lemmaPhat}, we have that $\lambda^*_{M,L}\in (-\infty,0)$ is well defined.
By definition, 
$\hat{I}_J^L(x) \leq \hat{I}_{J,M}^L(x)$ for every $x$. For the other inequality observe that
\begin{align*}
    \nonumber\hat{I}^{L}_J(x)&=\sup \limits_{\lambda \leq 0} \Big\{\lambda -\frac{1}{k}\sum \limits_{i=J}^{xk-1} \log \hat\phi_L(\lambda, \theta^i\omega)\Big\}\\
    \nonumber&=\sup \limits_{\lambda \leq 0} \Big\{\lambda -\frac{1}{k}\sum \limits_{i=J}^{xk-1} \log [\hat\phi_L^M(\lambda, \theta^i\omega)+\ex^{\hat{\omega}^L}_ie^{\lambda \tau_{i+1}}\ind[\tau_{i+1}\geq M]\Big\}\\
    \nonumber&\geq \sup \limits_{\lambda \leq 0} \Big\{\lambda -\frac{1}{k}\sum \limits_{i=J}^{xk-1} \log [\hat\phi_L^M(\lambda, \theta^i\omega)+e^{\lambda M}\pr^{\hat{\omega}^L}_i[\tau_{i+1}\geq M]\Big\}\\
    \nonumber&= \sup \limits_{\lambda \leq 0} \Big\{\lambda -\frac{1}{k}\sum \limits_{i=J}^{xk-1} \log \hat\phi_L^M(\lambda, \theta^i\omega)-\\
    \nonumber & \qquad \qquad\frac{1}{k}\sum \limits_{i=J}^{xk-1} \log \Big[1+\frac{e^{\lambda M}\pr^{\hat{\omega}^L}_i[\tau_{i+1}\geq M]}{\hat\phi_L^M(\lambda, \theta^i\omega)}\Big]\Big\}\\
    \nonumber &\geq  \lambda^*_{M,L}(x) -\frac{1}{k}\sum \limits_{i=J}^{xk-1} \log \hat\phi_L^M(\lambda^*_{M,L}(x), \theta^i\omega)\\
    &\nonumber 
    \qquad\qquad-\frac{1}{k}\sum \limits_{i=J}^{xk-1} \log \Big[1+\frac{e^{\lambda^*_{M,L}(x) M}\pr^{\hat{\omega}^L}_i[\tau_{i+1}\geq M]}{\hat\phi_L^M(\lambda^*_{M,L}(x), \theta^i\omega)}\Big]
  \end{align*}
  and therefore
  \begin{align}
    \nonumber\hat{I}^{L}_J(x)&
    \geq   \hat{I}^{L,M}_J(x)-\frac{1}{k}\sum \limits_{i=J}^{xk-1} \log \Big[1+\frac{e^{\lambda^*_{M,L}(x) M}\pr^{\hat{\omega}^L}_i[\tau_{i+1}\geq M]}{\hat\phi_L^M(\lambda^*_{M,L}(x), \theta^i\omega)}\Big]\\
    \nonumber&\geq \hat{I}^{L,M}_J(x)-\frac{1}{k}\sum \limits_{i=J}^{xk-1} \frac{e^{\lambda^*_{M,L}(x) M}\pr^{\hat{\omega}^L}_i[\tau_{i+1}\geq M]}{\hat\phi_L^M(\lambda^*_{M,L}(x), \theta^i\omega)}\\
    \label{Lboundfinal} &\geq \hat{I}^{L,M}_J(x)-\frac{1}{k} \kappa^{-1}e^{\lambda^*_{M,L}(x) (M-1)}\sum \limits_{i=J}^{xk-1}\pr^{\hat{\omega}^L}_i[\tau_{i+1}\geq M],
\end{align}
where in \eqref{Lboundfinal} we used that
$\hat\phi_L^M(\lambda, \theta^i\omega)\geq \omega_i e^{\lambda}$. Therefore there is a constant $\alpha_6$ such that \eqref{deltaLM} holds. The proof of \eqref{deltaPM} is analogous.

Now to prove \eqref{deltaMM} we couple the random walks $X_t$ in the environment
$\hat{\omega}$ and $Y_t^L$ in the environment $\hat{\omega}^L$ as follows.
The walks  start together ($X_0=Y_0^L=J$) and move independently if they are on different sites. If $X_t=Y_t^L$  and $X_{t+1}=X_t+1$ then $Y_{t+1}^L=X_{t+1}$.
The coupling is possible because the transformed environment satisfies $\hat{\omega}^L_x\geq \hat{\omega}_x$ for every $x\geq L$. Let the joint measure of the coupling with the random walks initially at $J$ be $\hat{\mu}_J^{\hat{\omega},\hat{\omega}^L}$ and consider the event $B^{\omega,L}_{M}=\{X_t=Y_t^L, \mbox{ for all }t\in [0,M] \}$. From the definition of $\hat\omega_i^L$, using \eqref{Sdecomp} we have that if $i>L$
\begin{equation}
    \label{eq-TL2} \hat\omega^L_i\triangleq  \frac{\omega_iS(L+1,\omega^{i-L}\omega)}{S(L,\omega^{i-L}\omega)}=\omega_i\Big(\frac{S(i+1, \omega)-S(i-L, \omega)}{S(i, \omega)-S(i-L, \omega)} \Big).
\end{equation}
Observe that if $X_t=Y_t^L=i$, then the probability that they jump to different sites is $\hat{\omega}^L_i-\hat{\omega}_i$, hence it holds for $M+L< J$ that
\begin{align}
\nonumber\hat{\mu}_i^{\hat{\omega},\hat{\omega}^L}\Big((B^{\omega,L}_{M})^\complement\Big)&\leq M \max \limits_{J-M\leq i \leq J+M } (\hat{\omega}^L_i-\hat{\omega}_i)\\
\nonumber&\leq M \sum \limits_{i=J-M}^{J+M} (\hat{\omega}^L_i-\hat{\omega}_i)\\
\label{eqxL1}&= M \sum \limits_{i=J-M}^{J+M} \omega_i\frac{S(i-L, \omega)}{S(i, \omega)}\Big(\frac{S(i+1, \omega)-S(i, \omega)}{S(i, \omega)-S(i-L, \omega)}\Big).
\end{align}
By definition we have that
$S(i, \omega)=S(i-L, \omega)+e^{V_{\omega}(i-L)}S(L, \theta^{i-L}\omega)$, and therefore the right hand side of \eqref{eqxL1} equals
\begin{align*}
    &M \sum \limits_{i=J-M}^{J+M} \omega_i\frac{S(i-L, \omega)}{S(i, \omega)}\Big(\frac{e^{V_{\omega}(i)}}{e^{V_{\omega}(i-L)}S(L, \theta^{i-L}\omega)}\Big)\\
    &\leq M \sum \limits_{i=J-M}^{J+M} \frac{W(L,\theta^{i-L}\omega)}{1+W(i-L,\omega)S(L, \theta^{i-L}\omega) }\\
    &=M \sum \limits_{i=J-M}^{J+M} \xi_{i}(L-1,\omega)
    =H^\omega_J(L,M).
\end{align*}

Due to the coupling we have $\hat{\phi}_L^M(\lambda,\theta^i\omega)\geq\phi_{i,M}(\lambda,\hat{\omega})$. To get an inequality in the other
 direction
 we use that
\begin{align*}
  \nonumber\hat{\phi}^M_L(\lambda,\theta^i\omega)&=\int e^{\lambda \tau_{i+1}}\ind[\tau_{i+1}<M]d\pr^{\hat{\omega}^L}_i \\
  \nonumber&=\int e^{\lambda \tau_{i+1}}\ind[\tau_{i+1}<M]d\hat{\mu}_i^{\hat{\omega},\hat{\omega}^L} \\
  \nonumber&=\int e^{\lambda \tau_{i+1}}\ind[\tau_{i+1}<M,B^{\omega,L}_{M}]d\hat{\mu}_i^{\hat{\omega},\hat{\omega}^L}\\
  &\qquad\qquad \qquad +\int e^{\lambda \tau_{i+1}}\ind[\tau_{i+1}<M,(B^{\omega,L}_{M})^\complement]d\hat{\mu}_i^{\hat{\omega},\hat{\omega}^L} \\
   \nonumber&\leq \int e^{\lambda \tau_{i+1}}\ind[\tau_{i+1}<M]d\pr^{\hat{\omega}}_i+\hat{\mu}_i^{\hat{\omega},\hat{\omega}^L}[(B^{\omega,L}_{M})^\complement]\\
   &= \phi_{i,M}(\lambda,\hat{\omega})+\hat{\mu}_i^{\hat{\omega},\hat{\omega}^L}[(B^{\omega,L}_{M})^\complement],
\end{align*}
and thus 
\begin{equation}
\label{eqphiL}0\leq \hat{\phi}_L^M(\lambda,\theta^i\omega)-\phi_{i,M}(\lambda,\hat{\omega})\leq H^\omega_i(L,M).
\end{equation}
It is easy to check that $\hat{I}_{J,M}(x)\geq \hat{I}^{L,M}_J(x)$. Using \eqref{eqphiL} we get
 \begin{align*}
\hat{I}^{L,M}_J(x)&\geq  \sup \limits_{\lambda \leq 0 } \Big\{\lambda -\frac{1}{k}\sum\limits_{i=J}^{xk-1} \log[{\phi}_{i,M}(\lambda,\theta^i\omega)+H^\omega_i(L,M)]\Big\}\\
&=\sup \limits_{\lambda \leq 0 } \Big\{\lambda -\frac{1}{k}\sum\limits_{i=J}^{xk-1} \log{\phi}_{i,M}(\lambda,\theta^i\omega)-\frac{1}{k}\sum\limits_{i=J}^{xk-1}\log\Big[1+\frac{H^\omega_i(L,M)}{{\phi}_{i,M}(\lambda,\theta^i\omega)}\Big]\Big\}\\
&\geq \lambda^*_{M}(x) -\frac{1}{k}\sum\limits_{i=J}^{xk-1} \log{\phi}_{i,M}(\lambda^*_{M}(x),\theta^i\omega)\\
&\qquad\qquad\qquad\qquad\qquad \qquad 
-\frac{1}{k}\sum\limits_{i=J}^{xk-1}\log\Big[1+\frac{H^\omega_i(L,M)}{{\phi}_{i,M}(\lambda^*_{M}(x),\theta^i\omega)}\Big]\\
&\geq\hat{I}_{J,M}(x)-\frac{1}{k}\sum\limits_{i=J}^{xk-1}\frac{H^\omega_i(L,M)}{{\phi}_{i,M}(\lambda^*_{M}(x),\theta^i\omega)}\\
&\geq\hat{I}_{J,M}(x)-\frac{1}{k \kappa}e^{-\lambda^*_{M}(x)}\sum\limits_{i=J}^{xk-1}H^\omega_i(L,M).
\end{align*}
Using $\eqref{lambdabound}$, we conclude that 
for some constant $\alpha_7$, 
  \begin{equation}
    \label{Bo3}|\hat{I}^{L,M}_J(x)-\hat{I}_{J,M}(x)|\leq \frac{\alpha_7}{k}\sum\limits_{i=J}^{xk-1}H^\omega_i(L,M). 
  \end{equation}
The last statement of the lemma is a consequence of the inequality 
$\hat\omega^L_i\geq \hat\omega_i$  for all $i>L$.
\end{proof}
\begin{lemma}\label{Jto1} For $J>L$ we have 
\[
\left|\hat I_J^L(x,k,\omega)-\hat I_0^L(x,k,\omega) \right|\leq \frac{J}{k} \log \Big(\frac{x}{\kappa^2(1-x)}\Big).
\]
\end{lemma}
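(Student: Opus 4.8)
The plan is to view $\hat I_0^L$ and $\hat I_J^L$ as suprema over $\lambda\le 0$ of two functionals differing only in the lower summation limit, and to compare them at the maximizer of the one with the longer sum. Write $G_J(\lambda):=\lambda-\frac1k\sum_{i=J}^{xk-1}\log\hat\phi_L(\lambda,\theta^i\omega)$, so that $\hat I_J^L(x,k,\omega)=\sup_{\lambda\le 0}G_J(\lambda)$ and $G_0(\lambda)-G_J(\lambda)=-\frac1k\sum_{i=0}^{J-1}\log\hat\phi_L(\lambda,\theta^i\omega)$. The first step is the a priori bound, valid for every $\lambda\le 0$ and every $i$,
\[
\kappa e^{\lambda}\ \le\ \hat\omega_i^L e^{\lambda}\ \le\ \hat\phi_L(\lambda,\theta^i\omega)=\ex_i^{\hat\omega^L}\bigl[e^{\lambda\tau_{i+1}}\bigr]\ \le\ 1,
\]
where the lower bound keeps only the contribution of $\{\tau_{i+1}=1\}$ and uses $\hat\omega_i^L\ge\omega_i\ge\kappa$ (see \eqref{eq-TL}), while the upper bound uses $\tau_{i+1}\ge 1$ together with the $\pr^{\hat\omega^L}$-a.s.\ finiteness of $\tau_{i+1}$ guaranteed by Lemma~\ref{lemmadrift}. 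Equivalently $0\le-\log\hat\phi_L(\lambda,\theta^i\omega)\le-(\lambda+\log\kappa)$; in particular $G_0\ge G_J$ pointwise, so $\hat I_0^L\ge\hat I_J^L$, and it remains to bound the difference from above.

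Next I would set $\lambda^\ast:=\argmax_{\lambda\le 0}G_0(\lambda)$, which exists since $G_0$ is concave and continuous with $G_0(0)=0$ and $G_0(\lambda)\le(1-x)\lambda-x\log\kappa\to-\infty$ as $\lambda\to-\infty$ (here $x<1$; for $x\ge1$ the statement is trivial, as noted after Lemma~\ref{mainlemma}). Since $\hat I_J^L\ge G_J(\lambda^\ast)$, the a priori bound gives
\[
\hat I_0^L-\hat I_J^L\ \le\ G_0(\lambda^\ast)-G_J(\lambda^\ast)\ =\ -\frac1k\sum_{i=0}^{J-1}\log\hat\phi_L(\lambda^\ast,\theta^i\omega)\ \le\ -\frac Jk\bigl(\lambda^\ast+\log\kappa\bigr),
\]
so everything reduces to a lower bound on $\lambda^\ast$. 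If $\lambda^\ast=0$ the left side vanishes (recall $\hat\phi_L(0,\cdot)=1$), and as long as $\kappa^2(1-x)\le x$, so that the asserted right-hand side is nonnegative, there is nothing to prove. Otherwise $\lambda^\ast\in(-\infty,0)$ is an interior critical point, so $\sum_{i=0}^{xk-1}\hat\phi_L'(\lambda^\ast,\theta^i\omega)/\hat\phi_L(\lambda^\ast,\theta^i\omega)=k$; combined with the estimate $\hat\phi_L'(\lambda,\theta^i\omega)/\hat\phi_L(\lambda,\theta^i\omega)\le1+e^{\lambda}/\kappa$ --- obtained exactly as in the derivation of \eqref{lambdabound}, by isolating the $\{\tau_{i+1}=1\}$ term in numerator and denominator and using $\hat\omega_i^L\ge\kappa$ --- this yields $k\le xk\,(1+e^{\lambda^\ast}/\kappa)$, hence $e^{\lambda^\ast}\ge\kappa(1-x)/x$, i.e.\ $\lambda^\ast\ge\log\!\bigl(\kappa(1-x)/x\bigr)$.

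Plugging this into the previous display gives $\hat I_0^L-\hat I_J^L\le-\frac Jk\bigl(\log\tfrac{\kappa(1-x)}{x}+\log\kappa\bigr)=\frac Jk\log\tfrac{x}{\kappa^2(1-x)}$, which together with $\hat I_0^L\ge\hat I_J^L$ proves the lemma. The only ingredient beyond bookkeeping --- and the step I expect to require the most care --- is checking that the derivative estimate $\hat\phi_L'/\hat\phi_L\le1+e^{\lambda}/\kappa$ transfers verbatim from the truncated $\phi_{i,M}$ used in Lemma~\ref{lemmaPhat} to the untruncated $\hat\phi_L$ in the environment $\hat\omega^L$; once that is granted everything else is soft, and the hypothesis $J>L$ does not enter actively here beyond matching the setting in which the lemma is applied.
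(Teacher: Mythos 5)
Your argument reproduces the paper's proof almost line for line: both establish $\hat I_0^L\ge\hat I_J^L$ from the non-negativity of $-\log\hat\phi_L$ for $\lambda\le 0$, both evaluate $G_J$ at $\lambda^*:=\argmax_{\lambda\le 0}G_0(\lambda)$ to bound the gap by $-\frac1k\sum_{i=0}^{J-1}\log\hat\phi_L(\lambda^*,\theta^i\omega)$, both use $\hat\phi_L(\lambda,\theta^i\omega)\ge\hat\omega_i^Le^\lambda\ge\kappa e^\lambda$, and both invoke the bound $e^{\lambda^*}\ge\kappa(1-x)/x$ by the computation of \eqref{lambdabound}. The worry you flag at the end — whether the derivative estimate carries over from $\phi_{i,M}$ in $\hat\omega$ to $\hat\phi_L$ in $\hat\omega^L$ — is handled in the paper exactly as you suggest, by citing ``the same computation as \eqref{lambdabound}'' without further ado, so you have not missed a step the authors deemed necessary.
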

\begin{proof}
Since the variational problems only involve $\lambda\leq 0$, it is straightforward that $I_0^L(x,k,\omega)\geq I_J^L(x,k,\omega)$ if $J>L$. Now define
\[
\lambda^*_{L}(x)=\argmax \limits_{\lambda \leq 0} \Big\{\lambda -\frac{1}{k}\sum \limits_{i=0}^{xk-1} \log \hat\phi_L(\lambda, \theta^i\omega)\Big\},
\]
then
\begin{align*}
    I_J^L(x,k,\omega)&=\sup \limits_{\lambda \leq 0} \Big\{\lambda -\frac{1}{k}\sum \limits_{i=J}^{xk-1} \log \hat\phi_L(\lambda, \theta^i\omega)\Big\}\\
    &\geq \lambda^*_{L}(x) -\frac{1}{k}\sum \limits_{i=J}^{xk-1} \log \hat\phi_L(\lambda^*_{L}(x), \theta^i\omega)\\
    &=I_0^L(x,k,\omega)+\frac{1}{k}\sum \limits_{i=0}^{J-1} \log \hat\phi_L(\lambda^*_{L}(x), \theta^i\omega)\\
    &\geq I_0^L(x,k,\omega)+\frac{1}{k}\sum \limits_{i=0}^{J-1} \log (\hat\omega^L_ie^{\lambda^*_{L}(x)})\\
    &\geq I_0^L(x,k,\omega)+\frac{J}{k} \log \Big(\frac{\kappa^2(1-x)}{x}\Big),
\end{align*}
where in the last line it was used that 
$e^{\lambda^*_{L}(x)}\geq \kappa(1-x)/x$, which holds due to the same computation as \eqref{lambdabound}. 
\end{proof}
Recall the notation $\bar \xi(i,\omega)$, see \eqref{defxibar}.

\begin{lemma}\label{lemmasigma} For any measure $\eta\in M_1^s(\Sigma)$ 
  it holds that
\[
\lim \limits_{L\to \infty}\int \bar\xi(L,\omega)\eta(d\omega)=0. 
\]
\end{lemma}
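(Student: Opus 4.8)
The plan is to reduce the statement to a uniform boundedness estimate plus a short case analysis on the drift. First note that for every $L\ge 0$ and every $\omega$ with coordinates in $[\kappa,1-\kappa]$ one has $0\le\bar\xi(L,\omega)\le W(L+1,\omega)\le (1-\kappa)/\kappa=C_\kappa$: the denominator in \eqref{defxibar} is at least $1$, and the bound on $W$ is the second inequality of Lemma~\ref{Wbound}. By the ergodic decomposition theorem it then suffices to prove $\int\bar\xi(L,\omega)\eta(d\omega)\to 0$ for $\eta$ ergodic, since each ergodic component contributes a quantity bounded by $C_\kappa$ uniformly in $L$, so dominated convergence in the mixing measure transfers the conclusion to a general $\eta\in M_1^s(\Sigma)$. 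I would then fix $\eta$ ergodic and set $\mu=\int\log\rho_0\,d\eta$.

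If $\mu>0$, Birkhoff's ergodic theorem gives, for $\eta$-a.e.\ $\omega$, that $V_\omega(n)/n\to\mu$ and $V_\omega(-n)/n\to-\mu$, hence $S(n,\omega)\ge e^{V_\omega(n-1)}\to\infty$ while $S(-\infty,\omega)=\sum_{j\ge 1}e^{V_\omega(-j)}<\infty$. Therefore
\[
\bar\xi(L,\omega)=\frac{W(L+1,\omega)}{1+S(L,\omega)/S(-\infty,\omega)}\le\frac{C_\kappa\,S(-\infty,\omega)}{S(L,\omega)}\longrightarrow 0,\qquad\eta\text{-a.s.},
\]
and dominated convergence with dominating constant $C_\kappa$ yields $\int\bar\xi(L,\omega)\eta(d\omega)\to 0$.

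The case $\mu\le 0$ is the main point, because pointwise convergence of $\bar\xi(L,\cdot)$ to $0$ is not available when $\mu=0$ (along ascending ladder epochs of $V_\omega$ it is not clear that $W(L+1,\omega)$ is small), so one must average before passing to the limit. The key input is that $S(-\infty,\omega)=\infty$ for $\eta$-a.e.\ $\omega$: when $\mu<0$ this is immediate since $V_\omega(-n)/n\to-\mu>0$, and when $\mu=0$ it follows from recurrence of centered stationary ergodic sums, i.e.\ $\liminf_n\sum_{i=1}^n\log\rho_{-i}=-\infty$, equivalently Solomon's criterion (see \cite{solomon} and the proof of Theorem~2.1.1 in \cite{coursezeitouni}); in either case $\limsup_n V_\omega(-n)=+\infty$ forces $\sum_j e^{V_\omega(-j)}=\infty$. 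Using $\bar\xi(L,\omega)\le W(L+1,\omega)$ together with the identity $W(L+1,\omega)=\bigl(S(-(L+1),\theta^{L+1}\omega)\bigr)^{-1}$ --- which comes from $W(n,\omega)^{-1}=\sum_{i=0}^{n-1}e^{V_\omega(i)-V_\omega(n)}$ and $V_\omega(i)-V_\omega(n)=V_{\theta^n\omega}(i-n)$, exactly as in the proof of Lemma~\ref{lemmaptau} --- and the $\eta$-invariance of $\theta^{L+1}$, one gets
\[
\int\bar\xi(L,\omega)\,\eta(d\omega)\le\int\frac{1}{S(-(L+1),\theta^{L+1}\omega)}\,\eta(d\omega)=\int\frac{1}{S(-(L+1),\omega)}\,\eta(d\omega).
\]
Since $S(-(L+1),\omega)$ increases in $L$ to $S(-\infty,\omega)=\infty$ $\eta$-a.s., while $0\le S(-(L+1),\omega)^{-1}\le S(-1,\omega)^{-1}=\rho_0(\omega)\le C_\kappa$, monotone (or dominated) convergence shows the right-hand side tends to $0$. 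Assembling the two cases and undoing the ergodic decomposition completes the proof; the delicate step I anticipate is precisely the justification of $S(-\infty,\omega)=\infty$ in the recurrent regime $\mu=0$, which rests on the recurrence criterion cited above rather than on any sign of the drift.
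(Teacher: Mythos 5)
Your proposal is correct and follows essentially the same route as the paper: reduce to ergodic $\eta$ by dominated convergence in the ergodic decomposition, split on the sign of $\mu=\int\log\rho_0\,d\eta$, bound $\bar\xi(L,\omega)$ by $W(L+1,\omega)$, use the identity $W(L+1,\omega)=\bigl(S(-(L+1),\theta^{L+1}\omega)\bigr)^{-1}$ together with shift invariance, and invoke Solomon's criterion to get $S(-\infty,\omega)=\infty$ a.s.\ when $\mu\le 0$. The only departure is cosmetic: for $\mu>0$ you pass directly to pointwise convergence $\bar\xi(L,\cdot)\to 0$ plus dominated convergence, while the paper recycles the quantitative tail estimates in terms of $N_1,N_2$ from \eqref{Wbound2} and \eqref{boundSminus} (which it needs anyway in Lemma~\ref{lemmaptau}); both are valid.
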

\begin{proof}
We first prove the result for $\eta\in M_1^e(\Sigma)$ and then extend it 
to all stationary measures. Consider first the case
$\int \log \rho_0 \eta(d\omega)\leq 0$. Then,
\begin{align*}
  \int \bar\xi(L,\omega) \eta(d\omega)&\leq  \int W(L+1,\omega) \eta(d\omega)\\
  &=\int W(L+1,\omega) \eta(d\omega)\to 0, \mbox{ as } L\to \infty, 
\end{align*}
where the limit is due to \eqref{Wintegral}. 

Consider next the case $\mu=\int \log \rho_0 \eta(d\omega) >0$. We have
that
\begin{align*}
   & \int \bar\xi(L,\omega) \eta(d\omega)\leq C_\kappa\int \Big(1+\frac{S(L,\omega)}{S(-\infty,\omega)}\Big)^{-1} \eta(d\omega)\\
    &\leq C_\kappa\int \big(1+e^{-\mu L/2}e^{V_{\omega}(L-1)}\big)^{-1} \eta(d\omega)+\eta(S(-\infty,\cdot)>e^{\mu L/2}). 
\end{align*}
Now \eqref{boundSminus} states that
\begin{align}
    \label{Ntozero}\eta(S(-\infty,\cdot)>e^{\mu L/2})&\leq \eta\Big(N_2(\omega)>\alpha_5+\frac{\log(e^{\mu/2 L}-\alpha_4)}{\log C_\kappa}\Big).
\end{align}
Using \eqref{Wbound2} with $L$ instead of $a\log M$ we get
\begin{align}
    \label{Ntozero2}\int \big(1+e^{-\mu/2 L}e^{V_{\omega}(L-1)}\big)^{-1} \eta(d\omega)&\leq \frac{1}{1+e^{\mu L/2}}+\eta(N_1(\cdot)>L-1).
\end{align}
Since the right hand sides of both \eqref{Ntozero} and \eqref{Ntozero2} 
tend to $0$ as $L\to \infty$, the result for ergodic measures is proved. 

Now consider $\eta\in M_1^s(\Sigma)$, then there is a family of measures $\{\eta_{\theta}\}_{\theta\in\mathbb{R}}\subset M_1^e(\Sigma)$ and a measure $\nu\in M_1(\mathbb{R})$ such that
\begin{equation}
    \label{decomp} \eta=\int \eta_\theta \nu(d\theta).
\end{equation}
Using \eqref{decomp} we obtain
\begin{align}
    \label{etos1}\lim \limits_{L\to \infty}\int \bar\xi(L,\omega)\eta(d\omega)&=\lim \limits_{L\to \infty}\int \int \bar\xi(L,\omega) \nu(d\theta)d\eta_\theta(\omega),
\end{align}
and since $0\leq \bar\xi(L,\omega)\leq C_{\kappa}$ due to Lemma~\ref{Wbound}, we can apply Fubini's Theorem and dominated convergence theorem in \eqref{etos1} to obtain 
\begin{align*}
    \lim \limits_{L\to \infty}\int \bar\xi(L,\omega)\eta(d\omega)&=\lim \limits_{L\to \infty}\int \int \bar\xi(L,\omega) d\eta_\theta(\omega)\nu(d\theta)\\
    &=\int \big(\lim \limits_{L\to \infty}\int \bar\xi(L,\omega) d\eta_\theta(\omega)\Big)\nu(d\theta)\\
    &=0,
\end{align*}
which extends the result to stationary measures. 
\end{proof}

We next prove Lemma \ref{lemma2}.
\begin{proof}[Proof of Lemma~\ref{lemma2}]
We first prove that for any fixed $i$ and $L$ we have 
that 
$\hat\omega_i^{L+1}<\hat\omega_i^{L}$. From the definition of $\hat\omega^L$,
\begin{align}
    \label{Lmono1} \hat\omega_i^{L+1}-\hat\omega_i^{L}&=\frac{\omega_iS(L+2,\theta^{i-L-1}\omega)}{S(L+1,\theta^{i-L-1}\omega)}-\frac{\omega_iS(L+1,\theta^{i-L}\omega)}{S(L,\theta^{i-L}\omega)},
\end{align}
while from \eqref{Sdecomp} we have that
\begin{align}
    \nonumber S(L+2,\theta^{i-L-1}\omega)&=S(1,\theta^{i-L-1}\omega)+\rho_1(\theta^{i-L-1}\omega)S(L+1,\theta^{i-L}\omega)\\
    \label{Lmono2}&=1+\rho_{i-L}S(L+1,\theta^{i-L}\omega),
\end{align}
and analogously
\begin{align}
    \label{Lmono3}S(L+1,\theta^{i-L-1}\omega)&=1+\rho_{i-L}S(L,\theta^{i-L}\omega).
\end{align}
Using \eqref{Lmono2} and \eqref{Lmono3} in \eqref{Lmono1} we obtain
\begin{align*}
    \hat\omega_i^{L+1}-\hat\omega_i^{L}&=\omega_i\Big(\frac{1+\rho_{i-L}S(L+1,\theta^{i-L}\omega)}{1+\rho_{i-L}S(L,\theta^{i-L}\omega)}-\frac{S(L+1,\theta^{i-L}\omega)}{S(L,\theta^{i-L}\omega)}\Big)\\
    &=\frac{\omega_i(S(L,\theta^{i-L}\omega)-S(L+1,\theta^{i-L}\omega))}{(1+\rho_{i-L}S(L,\theta^{i-L}\omega))S(L,\theta^{i-L}\omega)}<0,
\end{align*}
which proves the claimed monotonicity.
With it we can define a coupling analogous to the one defined in the proof of Lemma~\ref{lemmaapproxPhat}: Take $L'>L$, so for every $i$ we have that
$\hat\omega_i^{L'}<\hat\omega_i^{L}$, hence we can define two random walks $Y^{L'}_t$ and $Y^L_t$ such that for all $t$ we have that
$Y^{L'}_t\leq Y^L_t$. 
This in turns implies that 
$\hat{\phi}_L(\lambda, \omega)$ is non-increasing in $L$ for $\lambda\leq 0$,
which in turns gives that
\begin{equation}
 \label{Iphidef}   I_L^{\phi}(x,\eta):=\sup\limits_{\lambda \leq 0}\Big\{\lambda -x\int \log \phi(\hat\omega^L,\lambda)\eta(d\omega) \Big\}
\end{equation}
is increasing in $L$. This yields the claimed existence of the limit in 
\eqref{eq-IL}.
\end{proof}

\section{Identification of the rate function,
  study of the 
  variational problem, and proof of Proposition~\ref{limitteo}}
\label{sec-varprob}
In this section, we prove Proposition \ref{limitteo}.
We begin with a preliminary computation.
\begin{lemma}\label{lemmamins} With $\Lambda(\lambda)=\log\int \rho_0^\lambda d\eta$, set
\begin{equation}
    \label{Imdef}I_m(x)=\sup \limits_{\lambda \in \mathbb{R}} \{\lambda x - \Lambda(\lambda) \},
\end{equation}
then, with $\eta$ such that $s\in (1,\infty]$ as in the statement of Theorem \ref{mainteo},
\[
\inf \limits_{z>0}\frac{I_m(z)}{z}=s.
\]
\end{lemma}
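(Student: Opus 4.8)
The plan is to read the answer off from convex duality, using that $\Lambda(\lambda)=\log\int\rho_0^\lambda\,d\eta$ is the cumulant generating function of the random variable $\log\rho_0$, which is bounded by $\log C_\kappa$ thanks to Condition~\ref{elipse}. Hence $\Lambda$ is finite and real-analytic on all of $\mathbb{R}$, convex with $\Lambda(0)=0$, and strictly convex unless $\log\rho_0$ is $\eta$-a.s.\ constant; and $s=\sup\{\theta>0:\int\rho_0^\theta\,d\eta\le 1\}=\sup\{\theta>0:\Lambda(\theta)\le 0\}$. One first records that the degenerate case $\log\rho_0\equiv c$ is incompatible with $1<s<\infty$: there $\Lambda(\lambda)=c\lambda$, which forces $s=\infty$ if $c\le 0$ and $s=0$ if $c>0$, so $\Lambda$ may be assumed strictly convex whenever $s$ is finite. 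Since $I_m=\Lambda^{\ast}$, I would prove the two bounds $\inf_{z>0}I_m(z)/z\ge s$ and $\inf_{z>0}I_m(z)/z\le s$ separately.

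For the lower bound I would use that $s$ is the right endpoint of the interval $\{\theta\ge 0:\Lambda(\theta)\le 0\}$: continuity of $\Lambda$ gives $\Lambda(s)=0$ when $s<\infty$, so evaluating the variational formula for $I_m$ at $\lambda=s$ yields $I_m(z)\ge sz-\Lambda(s)=sz$ for every $z>0$, i.e.\ $I_m(z)/z\ge s$. When $s=\infty$, convexity of $\Lambda$ on an interval $[0,\theta']$ with $\Lambda(0)=0$ propagates $\Lambda(\theta')\le 0$ to $\Lambda(\theta)\le 0$ for all $0<\theta<\theta'$, so $\Lambda\le 0$ on $(0,\infty)$; then $I_m(z)\ge\theta z$ for every $\theta>0$, hence $I_m(z)/z=\infty=s$, and that case is finished.

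For the upper bound when $s<\infty$, the point is to exhibit a minimizing $z$, namely $z_0:=\Lambda'(s)$. The crucial step is $z_0>0$: if $\Lambda'(s)\le 0$ then, $\Lambda'$ being nondecreasing, $\Lambda$ is nonincreasing on $(-\infty,s]$, so $\Lambda(\theta)\ge\Lambda(s)=0$ for $\theta\in(0,s)$; but convexity on $[0,\theta']$ (with $\theta'\in(\theta,s)$ chosen so that $\Lambda(\theta')\le 0$, possible by definition of $s$, so that the maximum of the convex $\Lambda$ over $[0,\theta']$ is attained at an endpoint and is $\le 0$) also gives $\Lambda(\theta)\le 0$, whence $\Lambda\equiv 0$ on $[0,s]$ and, by real-analyticity, $\Lambda\equiv 0$ — the degenerate case already excluded. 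With $z_0>0$ settled (and $z_0=\int\rho_0^s\log\rho_0\,d\eta\le\log C_\kappa$, using $\int\rho_0^s\,d\eta=1$), the concave function $\lambda\mapsto\lambda z_0-\Lambda(\lambda)$ has derivative $z_0-\Lambda'(\lambda)$ vanishing at $\lambda=s$, so its supremum, which is $I_m(z_0)$, is attained there: $I_m(z_0)=sz_0-\Lambda(s)=sz_0$, giving $I_m(z_0)/z_0=s$. Together with the lower bound this yields $\inf_{z>0}I_m(z)/z=s$, attained at $z_0$.

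I do not expect a genuine obstacle here — the argument is a standard Legendre-duality computation — the only delicate points being the identification $\Lambda(s)=0$ at the edge of $\{\Lambda\le 0\}$ and the strict positivity $\Lambda'(s)>0$, both of which ultimately rely on ruling out $\rho_0\equiv\mathrm{const}$, which is precisely what the hypothesis $s\in(1,\infty]$ (together with uniform ellipticity) guarantees; this is the place where the assumption $s>1$ enters.
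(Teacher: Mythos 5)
Your argument is correct. It reaches the same conclusion as the paper via the same underlying Legendre-duality structure, but the execution is a bit different. The paper's proof parameterizes by $z$: it writes $I_m(z)/z = H(z) := \tilde\lambda(z) - \Lambda(\tilde\lambda(z))/z$ where $\tilde\lambda(z)$ solves $\Lambda'(\tilde\lambda)=z$, differentiates $H$ (using the criticality of $\tilde\lambda$ to get the envelope-theorem simplification $H'(z)=\Lambda(\tilde\lambda(z))/z^2$), and reads off that the stationary point occurs where $\Lambda(\tilde\lambda(z^*))=0$, i.e.\ $\tilde\lambda(z^*)=s$, so $H(z^*)=s$. You instead prove matching one-sided bounds: $I_m(z)\ge sz$ for all $z>0$ directly from the Fenchel supremum evaluated at $\lambda=s$ (using $\Lambda(s)=0$), and then exhibit $z_0=\Lambda'(s)>0$ at which the supremum defining $I_m(z_0)$ is attained at $\lambda=s$, giving $I_m(z_0)=sz_0$. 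Your route has the advantage of making explicit two points the paper leaves implicit: that $\Lambda'(s)>0$ (so $z_0$ is an admissible point with $z>0$), and that the degenerate case $\log\rho_0\equiv\text{const}$ is excluded by $s\in(1,\infty)$, which is what guarantees strict convexity and hence that the critical point found is genuinely the minimizer. The paper's version is shorter but tacitly assumes a suitable $z^*>0$ exists and that the critical point of $H$ is a global minimum; your version closes those gaps. Both handle $s=\infty$ the same way.

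One small remark: your argument that $\Lambda'(s)>0$ can be shortened. Once you know $\Lambda$ is strictly convex with $\Lambda(0)=\Lambda(s)=0$ and $0<s<\infty$, strict convexity forces $\Lambda<0$ on $(0,s)$ and hence $\Lambda'(s)>0$ immediately (if $\Lambda'(s)\le0$, strict monotonicity of $\Lambda'$ would make $\Lambda$ strictly decreasing on $(-\infty,s]$, contradicting $\Lambda(0)=\Lambda(s)$). This avoids the slightly elaborate two-sided squeeze you used, though your version is also correct.
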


\begin{proof}
  The claim is trivial if $s=\infty$, since then $I_m(z)=\infty$ for any $z>0$.
  So we restrict attention to $s\in (1,\infty)$.
Since $\Lambda(\cdot)$ is differentiable, 
we can compute the critical point $\tilde\lambda(x)$ of $G_x(\lambda)=\lambda x - \Lambda(\lambda)$ to get
\begin{align}
    \label{eq-Gx}G'_x(\tilde\lambda(x))=x-\Lambda'(\tilde\lambda(x))=0,
\end{align}
and then
\[
\inf \limits_{z>0}\frac{I_m(z)}{z}=\inf \limits_{z>0}\frac{\tilde\lambda(z)z-\Lambda(\tilde\lambda(z))}{z}=\inf \limits_{z>0}\Big(\tilde\lambda(z)-\frac{\Lambda(\tilde\lambda(z))}{z}\Big)
=:\inf\limits_{z>0} H(z).
\]
Since $\Lambda(\cdot)$ is analytic and (strictly) convex, its second derivative is
strictly positive and so, 
by \eqref{eq-Gx} and the implicit function theorem,
the map $z\mapsto \tilde\lambda(z)$ is differentiable.
Thus, the function $z\mapsto H(z)$ is differentiable on $\mathbb{R}_+$ and its derivative is
\[
H'(z)=\tilde\lambda'(z)-\frac{\Lambda'(\tilde\lambda(z))\tilde\lambda'(z)}{z}+\frac{\Lambda(\tilde\lambda(z))}{z^2}=\frac{\Lambda(\tilde\lambda(z))}{z^2}.
\]
Therefore, 
the only positive solution to $H'(z)=0$ is the solution to
$\Lambda(\tilde\lambda(z))=0$. Thus,
for  $z^*$ satisfying the latter equality we have 
\[
\inf \limits_{z>0} H(z)=\tilde\lambda(z^*)-\frac{\Lambda(\tilde\lambda(z^*))}{z^*}=\tilde\lambda(z^*).
\]
This means that the value of $\inf \limits_{z>0}{I_m(z)}/{z}$ is the solution 
to the equation
\[
\log  \int\rho_0^\lambda d\eta =0,
\]
concluding the proof. 
\end{proof}

As a second step toward the proof of Proposition \ref{limitteo}, we derive
upper and lower bounds on the scaled logarithmic limit of $\chi$.
Recall the definition of the quantities $f$ and $g$ in \eqref{fgdef} and $\chi$ in $\eqref{eq-chi}$. From now on for simplicity we denote the vectors
of measures $\{\eta\}^\e_{x}=(\eta_{1},\ldots, \eta_{x/\e})$ and $\{\bar\eta\}^\e_{x,c}=(\bar{\eta}_{1},\ldots, \bar{\eta}_{\lfloor(c-x)/\e\rfloor})$.
Recall the notation $\chi(k,x,c,\eta)$ and $\Delta_\e$, see \eqref{eq-chi}
and \eqref{Deltadef}, and  the constant $C_\kappa$, see \eqref{eq-ckappa}.

\begin{lemma} \label{lemmaLU} Under the assumptions of Theorem~\ref{mainteo}, we have that
\begin{equation}
 \label{LUchi}   I^*_l(x,c) \leq \lim \limits_{k\to \infty} -\frac{1}{k}\log \chi(k,x,c) \leq  I^*_u(x,c),
\end{equation}
where
\begin{align}
    \label{Ifore}I^*_u(x,c)&=\inf \limits_{\{\eta\}^\e_{x},\{\bar\eta\}^\e_{x,c}\in M_1^s(\Sigma_p)} \Big\{ \Big[\Big(\Delta_\e (\{\eta\}^\e_{x},\{\bar\eta\}^\e_{x,c})-\e\log C_{\kappa} \Big)^+\\
    \nonumber&-{I}^{f}\Big(x,\frac{\e}{x} \sum \limits_{i=1}^{x/\e}\eta_i\Big)\Big]^-+{\e}\sum_{i=1}^{x/\e}h(\eta_i|p)+{\e}\sum_{j=1}^{\lfloor(c-x)/\e\rfloor}h(\bar{\eta}_j|p) \Big\},
\end{align}
and 
\begin{align}
    \label{Ilower}I^*_l(x,c)&=\inf \limits_{\{\eta\}^\e_{x},\{\bar\eta\}^\e_{x,c}\in M_1^s(\Sigma_p)} \Big\{ \Big[\Big(\Delta_\e (\{\eta\}^\e_{x},\{\bar\eta\}^\e_{x,c})+\e\log C_{\kappa} \Big)^+\\
    \nonumber&-{I}^{f}\Big(x,\frac{\e}{x} \sum \limits_{i=1}^{x/\e}\eta_i\Big)\Big]^-+{\e}\sum_{i=1}^{x/\e}h(\eta_i|p)+{\e}\sum_{j=1}^{\lfloor(c-x)/\e\rfloor}h(\bar{\eta}_j|p) \Big\}.
\end{align}
\end{lemma}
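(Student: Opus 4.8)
The plan is to establish the two one–sided bounds $\liminf_{k\to\infty}-\frac1k\log\chi(k,x,c)\ge I^*_l(x,c)$ and $\limsup_{k\to\infty}-\frac1k\log\chi(k,x,c)\le I^*_u(x,c)$; since (as will be seen once $\e\to0$ in the proof of Proposition~\ref{limitteo}) these pinch, this yields \eqref{LUchi}. Throughout $\e$ is fixed as in Definition~\ref{partition}; I abbreviate the integrand of $\chi$ in \eqref{eq-chi} by $\psi_k(\omega)=(1-f)/((1-f)+fg)$, with $f=f(\omega,x,k)=\pr^{\hat\omega}_1[\tau_{xk}>k]$ and $g=g(\omega,x,c,k)=S(xk,\omega)/S(ck,\omega)$ as in \eqref{fgdef}. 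The first step is the elementary two–sided bound
\[
\tfrac12\min\!\Big(1,\tfrac{1-f}{g}\Big)\ \le\ \psi_k(\omega)\ \le\ \min\!\Big(1,\tfrac{2(1-f)}{g}\Big),
\]
where the left inequality uses $(1-f)+fg\le(1-f)+g\le2\max(1-f,g)$ and the right one uses that $g<1$ forces $g(1-2f)\le 2(1-f)$, i.e. $(1-f)+fg\ge g/2$. Taking $-\frac1k\log$ shows that, up to a deterministic $o(1)$ error, $-\frac1k\log\psi_k(\omega)=\big(-\tfrac1k\log(1-f)+\tfrac1k\log g\big)^+$, i.e. the exponential rate of $\psi_k$ is $\big(\mathrm{rate}(1-f)-\mathrm{rate}(g)\big)^+$, which is exactly the combination appearing in \eqref{Ifore}--\eqref{Ilower} once one uses the identity $(a-b)^+=(b-a)^-$.

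Second, I would identify the two rates in terms of the block empirical fields of Definition~\ref{partition}. For $g$, Lemma~\ref{lemma1} gives the deterministic sandwich $(\Delta_\e-\e\log C_\kappa)^+-o(1)\le -\frac1k\log g\le(\Delta_\e+\e\log C_\kappa)^+ +o(1)$, with $\Delta_\e$ from \eqref{Deltadef} a continuous function of $(\{R_{i,\e}\},\{\bar R_{i,\e}\})$; the $\pm\e\log C_\kappa$ slack is responsible for the difference between $I^*_l$ and $I^*_u$. For $1-f=\pr^{\hat\omega}_1[\tau_{xk}\le k]$ I would use Section~\ref{sec-lem13}: Lemma~\ref{lemmaPhat} bounds $-\frac1k\log(1-f)$ from below by $\hat I_0(x,k,\omega)$ (up to the $o(1/k)$ shift $\hat I_0\leftrightarrow\hat I_1$) and, after the harmless time–dilation $\tau\mapsto(1+l)\tau$, from above by $\hat I_{0,M}(x,k,\omega)+O(l)+o(1)$; Lemma~\ref{lemmaapproxPhat} passes to the \emph{local} quantities $\hat I_0^L$, $\hat I_{0,M}^L$ at the price of Ces\`aro averages of the $\xi$'s and of $\pr^{\hat\omega}_i[\tau_{i+1}\ge M]$, which vanish by Lemmas~\ref{lemmaptau} and~\ref{lemmasigma} in the order $k\to\infty$, then $M\to\infty$; and Lemma~\ref{Jto1} handles $\hat I_J^L\leftrightarrow\hat I_0^L$. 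Since $\log\hat\phi^M_L(\lambda,\cdot)$ and $\log\hat\phi_L(\lambda,\cdot)$ are bounded local functions, $\hat I_0^L(x,k,\omega)=\sup_{\lambda\le0}\{\lambda-x\!\int\log\hat\phi_L(\lambda,\cdot)\,dR_{xk}\}$ with $R_{xk}\approx\frac\e x\sum_i\eta_i$ when the block fields are frozen at $\eta_i$, so as $k\to\infty$ this converges to $I^\phi_L\big(x,\frac\e x\sum_i\eta_i\big)$ from \eqref{Iphidef}, and then $L\to\infty$ gives $I^F\big(x,\frac\e x\sum_i\eta_i\big)=I^f\big(x,\frac\e x\sum_i\eta_i\big)$ by Lemma~\ref{lemma2} and \eqref{eq-IL}.

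Combining these two identifications, for each fixed $M,L$ one can sandwich $\psi_k(\omega)$ between $e^{\pm k\,o_{k,M,L}(1)}$ multiples of $\exp\!\big(-k\,\Phi_{\e,M,L}(\{R_{i,\e}\},\{\bar R_{i,\e}\})\big)$, where $\Phi_{\e,M,L}$ is the continuous functional $\big[(\Delta_\e\mp\e\log C_\kappa)^+-I^{\phi,M}_L\big(x,\tfrac\e x\sum_i\eta_i\big)\big]^-$, with $-$ (resp.\ $+$) in the slack according as one wants the lower (resp.\ upper) bound on $\chi$; here continuity of $I^{\phi,M}_L(x,\cdot)$ in the weak topology follows from Berge's maximum theorem, since the $\lambda$–range is effectively compact and $\log\hat\phi_L^M(\lambda,\cdot)$ is bounded and jointly continuous. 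Lemma~\ref{blocksLDP}, applied to the partition $I_1,\dots,I_{x/\e},\bar I_1,\dots,\bar I_{\lfloor(c-x)/\e\rfloor}$ of $[0,ck-1]$, gives a joint LDP for $(\{R_{i,\e}\},\{\bar R_{i,\e}\})$ at speed $k$ (blocks of length $\approx\e k$, $n=ck$) with rate $\e\sum_i h(\eta_i|p)+\e\sum_j h(\bar\eta_j|p)$. A standard Laplace argument then yields, for each $M,L$, that $\limsup_k-\tfrac1k\log\chi$ (resp.\ $\liminf_k-\tfrac1k\log\chi$) is at most (resp.\ at least) the infimum over $(\{\eta_i\},\{\bar\eta_j\})$ of $\Phi^{\mathrm{up}}_{\e,M,L}$ (resp.\ $\Phi^{\mathrm{low}}_{\e,M,L}$) plus $\e\sum_i h(\eta_i|p)+\e\sum_j h(\bar\eta_j|p)$; finally sending $M,L\to\infty$ (monotone convergence $I^{\phi,M}_L\uparrow I^\phi_L\uparrow I^F=I^f$) and $l\to0$ (continuity of $x\mapsto I^f(x,\cdot)$) replaces the bracketed functionals by those of \eqref{Ifore} and \eqref{Ilower}, giving $I^*_u$ and $I^*_l$.

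I expect the main obstacle to be precisely the interchange of the limits $k\to\infty$ and $M,L\to\infty$: $\psi_k$ is \emph{not} a continuous function of the finitely many block empirical fields, because $f$ involves the non-local environment $\hat\omega$, so one cannot feed $\psi_k$ directly into a Laplace principle but must sandwich it between $e^{\pm k\,o_{k,M,L}(1)}$ multiples of an honest continuous functional of $(\{R_{i,\e}\},\{\bar R_{i,\e}\})$, and then control — uniformly enough in the environment, via Lemmas~\ref{lemmaptau}, \ref{lemmaapproxPhat} and~\ref{lemmasigma} — that the coupling defect between $\hat\omega$ and the range–$L$ surrogate $\hat\omega^L$, the truncation of hitting times at level $M$, and the removal of deep backtracking all produce errors that genuinely vanish in the stated order of limits; the extra $(1+l)$–dilation needed to convert the hitting–time lower bound \eqref{eq-LBtau} into a statement about $\pr^{\hat\omega}_1[\tau_{xk}\le k]$ rather than $\pr^{\hat\omega}_1[\tau_{xk}\le k(1+l)]$ adds a further layer of bookkeeping before $l\to0$.
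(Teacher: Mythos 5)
Your overall architecture is the same as the paper's: bound the integrand of $\chi$ above and below by expressions whose exponential rate is $(\mathrm{rate}(1-f)-\mathrm{rate}(g))^+$, identify $\mathrm{rate}(g)$ via Lemma~\ref{lemma1} and $\mathrm{rate}(1-f)$ via the $\hat\omega\leadsto\hat\omega^L$ machinery of Section~\ref{sec-lem13}, invoke Lemma~\ref{blocksLDP} and Varadhan, and then send $L,M\to\infty$, $l\to 0$. Your elementary two-sided bound
\[
\tfrac12\min\!\Big(1,\tfrac{1-f}{g}\Big)\le\frac{1-f}{(1-f)+fg}\le\min\!\Big(1,\tfrac{2(1-f)}{g}\Big)
\]
is correct and is a cleaner route to the $(\cdot)^-$ structure than the paper's $(1+e^{a-b}-e^{-b})^{-1}\le e^{-(b-a)^-}$ computation, so that part is a genuine (small) simplification.

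Where the proposal goes wrong is precisely the point you flagged as the ``main obstacle,'' but the resolution you sketch is not the one that works. Writing the sandwich as $\psi_k(\omega)\in e^{\pm k\,o_{k,M,L}(1)}\exp\{-k\Phi_{\e,M,L}(\{R_{i,\e}\},\{\bar R_{i,\e}\})\}$ implicitly treats the approximation error (in passing from $\hat\omega$ to $\hat\omega^L$ and from $\phi_i$ to $\phi_{i,M}$) as deterministic, and your phrase ``control uniformly enough in the environment'' reinforces that. Neither holds: the coupling defect $\sum_i(\hat\omega^L_i-\hat\omega_i)$ and the tail probabilities $\pr_i^{\hat\omega}[\tau_{i+1}\ge M]$ are genuinely random, not uniformly small, and Lemmas~\ref{lemmaptau} and \ref{lemmasigma} only control them in Ces\`aro average. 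The correct mechanism, which the paper uses, is not to push these into a prefactor but to \emph{absorb them into the functional}: \eqref{fUbound} defines $\hat I^\Phi_{l,M,L}(x,R_{xk})=\hat I^L(x'_l,R_{xk})+\alpha_6 x'_l\Phi_1(M,R_{xk})+\alpha_7 x'_l\Phi_{2,M}(L,R_{xk})$, where $\Phi_1,\Phi_{2,M}$ (see \eqref{def1}--\eqref{def2}) are bounded continuous functionals of $R_{xk}$. One then runs Varadhan with this enlarged functional and only afterward, with the infimizing measure $\eta$ frozen, invokes $\lim_{M}\Phi_1(M,\eta)=\lim_L\Phi_{2,M}(L,\eta)=0$ to peel off the extra terms. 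Note also the asymmetry: for the upper bound on $\chi$ (hence for $I^*_l$) the paper needs only \eqref{fLbound}, whose error $\alpha_8 J/k$ is deterministic, so the subtlety bites only on the $I^*_u$ side. Finally, a small index point: Lemma~\ref{lemmaapproxPhat} requires $J>M+L$, so you cannot work with $\hat I_0$ or $\hat I_1$ directly; you need $J=J(M,L)\to\infty$ with $J=o(k)$ and then Lemma~\ref{Jto1} to come back to $J=0$, which is why the extra $J$-layer appears in the paper.
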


\begin{proof}
The strategy of the proof is as follows.
We first use
Lemmas~\ref{lemmaPhat} and ~\ref{Jto1} to 
express 
the functions $f$ and $g$ as 
exponentials of functionals of empirical fields plus 
deterministic error terms, and use these expressions
in upper and lower 
bounds on $\chi$ (see \eqref{upperchi} and \eqref{lowerchi}). 
Varadhan's Lemma is then invoked to compute the re-scaled logarithmic limit 
in terms of a variational problem (see \eqref{eq-finalvar}).

From now on, we denote $\hat I^L_0(x,k,\omega)=I^L(x,R_{xk})$, 
where $R_{xk}$ is as in \eqref{eq-Rn} with $n=xk$, and throughout $L,M,J$ are
positive integers as in Section \ref{sec-lem13}. In what follows, we will 
assume that $L\leq xk$ and that $J\geq L+M$. (Eventually, we will take limits
in $k\to\infty$, followed by $J\to\infty$ and then by $M,L\to \infty$.)

We begin by estimating $f(\omega, x,k)$:
\begin{align}
 \nonumber -\frac{1}{k}\log (1-f(\omega,x,k))&=-\frac{1}{k}\log \pr_1^{\hat\omega}[\tau_{xk}\leq k]\\
 &\geq-\frac{1}{k}\log \pr_J^{\hat\omega}[\tau_{xk}\leq k]
  \label{ineq3} \geq \hat{I}_J(x,k,\omega),
\end{align}
where in \eqref{ineq3} we used Lemma~\ref{lemmaPhat}.
Since for $i> L$ we have that
$\hat{\omega}_i^L\geq \hat{\omega}_i$, for $J>L$ we deduce that
$\hat{I}_J(x,k,\omega)\geq \hat{I}_J^L(x,k,\omega)$. Using Lemma~\ref{Jto1} 
it follows that for some positive constant $\alpha_8$ independent of $k$ and $J$,
\begin{align}
\nonumber-\frac{1}{k}\log (1-f(\omega,x,k))&=-\frac{1}{k}\log \pr_1^{\hat\omega}[\tau_{xk}\leq k]\\
\label{fLbound} &\geq\Big(\hat I^L(x,R_{xk})-\frac{\alpha_8J}{k}\Big)^+.
\end{align}
For the upper bound, let $x'_l=x(1+l)$ and $k'_l=k/(1+l)$. Then,
\begin{align}
\nonumber -\frac{1}{k}\log (1-f(\omega,x,k))&= -\frac{1}{k}\log\pr_1^{\hat\omega}[\tau_{xk}\leq k]\\
\nonumber &=-\frac{1}{k}\log\pr_1^{\hat\omega}[\tau_{x'_lk'_l}\leq k'_l(1+l)]\\
\label{ineq4}&\leq \hat{I}_{1,M}(x'_l,k'_l,\omega)+\alpha_3l-\frac{1}{k'_l}\log\big(1-2e^{-\frac{2k'_ll^2}{x'_lM^2}}\big),
\end{align}
where in \eqref{ineq4} again we used Lemma~\ref{lemmaPhat}. By Lemma~\ref{lemmaapproxPhat} we have 
\begin{align*}
  \hat{I}_{1,M}(x'_l,k'_l,\omega)&\leq \hat{I}_1^L(x'_l,k'_l,\omega)+\frac{\alpha_7M}{k}\sum\limits_{i=J}^{xk-1}\sum \limits_{i=J-M}^{J+M} \xi_{i}(L-1,\omega)\\
    &\qquad+\frac{\alpha_6}{k} \sum \limits_{i=J}^{xk-1}\pr^{\hat{\omega}^L}_i[\tau_{i+1}\geq M].
\end{align*}
Using Lemma~\ref{lemmaptau} and \eqref{Hbound}, we obtain
\begin{align}
  \nonumber\hat{I}_{1,M}(x'_l,k'_l,\omega)&\leq\hat{I}_1^L(x'_l,k'_l,\omega)+\alpha_6x'_l\Phi_{1}(M,R_{xk})\\
    \label{pbound2}&\qquad+  \alpha_7x'_l\Phi_{2,M}(L,R_{xk})+\alpha_6x'_le^{-M^{1+a\log \kappa}/(a\log M)},
\end{align}
where we used the abbreviations
\begin{equation}
   \label{def1}\Phi_{1}(M,R_{xk})=\int\bar\xi(a\log M,\omega)R_{xk}(d\omega), 
\end{equation}
\begin{equation}
    \label{def2}\Phi_{2,M}(L,R_{xk})=M\sum_{j=-M}^{M} \int \bar\xi(L-1,\theta^{j-M-L+1}\omega)R_{xk}(d\omega).
\end{equation}
Using \eqref{pbound2} in \eqref{ineq4} we obtain
\begin{align}
  \label{fUbound}
    &\frac{1}{k}\log (1-f(\omega,x,k))\\
  \nonumber  &\leq \hat{I}^L(x'_l,R_{xk})+\alpha_6x'_l\Phi_{1}(M,R_{xk})+\alpha_7x'_l \Phi_{2,M}(L,R_{xk})+\beta(k'_l,l,M,L)\\
    \nonumber&=:\hat{I}^{\Phi}_{l,M,L}(x,R_{xk})+\beta(k'_l,l,M,L),
\end{align}
where again for clarity we denoted
\begin{equation}
    \label{def3}\beta(k,l,M,L)=\alpha_3l-\log \big(1-2e^{-\frac{2kl^2}{x'_lM^2}}\big)+\alpha_6x'_le^{-M^{1+a\log \kappa}/(a\log M)}+\alpha_8J/k.
\end{equation}
Turning to bound $g(\omega,x,c,k)$ in \eqref{eq-chi}, we define
\begin{align*}
I_l^{g,\e}(\omega,x,c)=\Big(\Delta_{\e}-\e\log C_{\kappa} \Big)^+,\quad  I_u^{g,\e}(\omega,x,c)=\Big(\Delta_{\e}+\e\log C_{\kappa} \Big)^+.
\end{align*}
Lemma~\ref{lemma1} and the definition of $g$, see \eqref{fgdef},
imply that
\begin{equation}
\label{gbounds1}g(\omega,x,c,k)\geq \exp \{-k I_u^{g,\e}(\omega,x,c)-{(c+x)\log C_\kappa}/{\e}-\log(xk)\},
\end{equation}
and 
\begin{equation}
\label{gbounds2}g(\omega,x,c,k)\leq \exp \{-k I_l^{g,\e}(\omega,x,c)+{(c+x)\log C_\kappa}/{\e}+\log (ck)\}.
\end{equation}
We now use the bounds on $f,g$ to estimate $\chi$. Denote 
\begin{equation}
    \label{errorq1} q^{c,\e}_{1,k}={(c+x)\log C_\kappa}/{\e}+\log(xk).
\end{equation}
We use \eqref{fLbound} and \eqref{gbounds1} to get an upper bound for $\chi  (k,x,c)$, as follows. 
\begin{align}
  \label{upperchi}
&\chi  (k,x,c)=\int \Big(\frac{1-f(\omega,x,k)}{1-f(\omega,x,k)(1-g(\omega,x,c,k))} \Big) P(d\omega)
\\ \nonumber &\leq  \int \frac{e^{-(kI^L(x,R_{xk})-\alpha_8J)^+}}{1-(1- e^{-(kI^L(x,R_{xk})-\alpha_8J)^+})(1- e^{-k I_u^{g,\e}(\omega,x,c)-q^{c,\e}_{1,k}})}P(d\omega)\\
\nonumber&=\int \Big(1+\exp\Big\{(kI^L(x,R_{xk})-\alpha_8J)^+-k I_u^{g,\e}(\omega,x,c)-q^{c,\e}_{1,k}\Big\}\\
\nonumber&\qquad \qquad - \exp\Big\{-k I_u^{g,\e}(\omega,x,c)-q^{c,\e}_{1,k}\Big\} \Big)^{-1}P(d\omega)\\
\nonumber&\stackrel{(a)}{\leq} \int \exp\Big\{-\Big(k I_u^{g,\e}(\omega,x,c)+q^{c,\e}_{1,k}-(kI^L(x,R_{xk})-\alpha_8J)^+\Big)^-\Big\} P(d\omega)\\
\nonumber&\leq \int \exp\Big\{-\Big(k I_u^{g,\e}(\omega,x,c)+q^{c,\e}_{1,k}+\alpha_8J-kI^L(x,R_{xk})\Big)^-\Big\} P(d\omega)\\
&\leq e^{q^{c,\e}_{1,k}+\alpha_8J} \int \exp\Big\{-\Big(k I_u^{g,\e}(\omega,x,c)-kI^L(x,R_{xk})\Big)^-\Big\} P(d\omega),
\nonumber\end{align}
where in $(a)$  we used that for $a,b\geq 0$ we have
that $(1+e^{a-b}-e^{-b})^{-1}\leq e^{-(b-a)^-}$. Denote
\begin{equation}
    \label{errorq2}q^{c,\e}_{2,k}={(c+x)\log C_\kappa}/{\e}+\log (ck).
\end{equation}
Similarly we use \eqref{fUbound} and \eqref{gbounds2} to get the lower bound. 
\begin{align}
  \label{lowerchi}
&\chi  (k,x,c)=\int \Big(\frac{1-f(\omega,x,k)}{1-f(\omega,x,k)(1-g(\omega,x,c,k))} \Big) P(d\omega)\\ \nonumber
&\geq \int \frac{e^{-k\hat{I}^{\Phi}_{l,M,L}(x,R_{xk})-k\beta(k'_l,l,M,L)}}{1-(1- e^{-k\hat{I}^{\Phi}_{l,M,L}(x,R_{xk})-k\beta(k'_l,l,M,L)})(1- e^{-k I_l^{g,\e}(\omega,x,c)+q^{c,\e}_{2,k}})}P(d\omega)\\
\nonumber&= \int \Big(1+\exp\Big\{k\hat{I}^{\Phi}_{l,M,L}(x,R_{xk})+k\beta(k'_l,l,M,L)-k I_l^{g,\e}(\omega,x,c)+q^{c,\e}_{2,k}\Big\}\\
\nonumber&\qquad\qquad- \exp\Big\{-k I_l^{g,\e}(\omega,x,c)+q^{c,\e}_{2,k}\Big\} \Big)^{-1}P(d\omega)\\
\nonumber&\geq  \int \Big(1+\exp\Big\{k\hat{I}^{\Phi}_{l,M,L}(x,R_{xk})+k\beta(k'_l,l,M,L)-k I_l^{g,\e}(\omega,x,c)+q^{c,\e}_{2,k}\Big\}\Big)^{-1}P(d\omega)\\
\nonumber&\stackrel{(b)}{\geq} \frac{1}{2}\int \exp\Big\{-\Big(k I_l^{g,\e}(\omega,x,c)-q^{c,\e}_{2,k}-k\hat{I}^{\Phi}_{l,M,L}(x,R_{xk})-k\beta(k'_l,l,M,L)\Big)^-\Big\} P(d\omega)\\
&\geq \frac{e^{-q^{c,\e}_{2,k}-k\beta(k'_l,l,M,L)}}{2}\int \exp\Big\{-\Big(k I_l^{g,\e}(\omega,x,c)-k\hat{I}^{\Phi}_{l,M,L}(x,R_{xk})\Big)^-\Big\} P(d\omega),
\nonumber\end{align}
where in $(b)$ we used that $(1+e^a)^{-1}\geq e^{-a^+}/2$ and $(a-b)^+=(b-a)^-$.

We next take the rescaled logarithmic limit of $\chi$.
This has do be done in both the bounds of \eqref{upperchi} and \eqref{lowerchi}, but since the proofs are similar we only consider the later, i.e. \eqref{lowerchi}, which is slightly more complex.  
Taking the rescaled logarithm,   \eqref{lowerchi} becomes
\begin{align}
   \label{upperchi2} -\frac{1}{k}\log \chi &\leq -\frac{1}{k} \log \int \exp\Big\{-\Big(k I_l^{g,\e}(\omega,x,c)-k\hat{I}^{\Phi}_{l,M,L}(x,R_{xk})\Big)^-\Big\} P(d\omega)\\
   \nonumber&\qquad+\frac{q^{c,\e}_{2,k}+k\beta(k'_l,l,M,L)+\log 2}{k}.
\end{align}
From \eqref{def3},
\begin{align}
     \label{errorterm1}\lim \limits_{k\to \infty}\beta(k'_l,l,M,L)&=\alpha_3l+\alpha_6x'_le^{-M^{1+a\log \kappa}/(a\log M)},
\end{align}
while
\begin{equation}
   \lim \limits_{k\to \infty} \frac{q^{c,\e}_{2,k}+\log 2}{k}=0.
\end{equation}
This controls the last term in the right-hand side \eqref{upperchi2}. The exponent in the integral in the right-hand side of \eqref{upperchi2} is non-positive, moreover the empirical fields emerge only in 
two functions in the exponent, viz.
\begin{align*}
    F_1(\eta)=\int \log \rho_0(\omega)\eta(d\omega),\quad F_2(\eta)=\int \log \phi(\lambda,\omega) \eta(d\omega).
\end{align*}
Such functions are measurable on $M_1(\Sigma^-)$. It is straightforward that the mapping $\eta\to F_1(\eta)$ is continuous in the weak topology. Moreover from
\cite[Lemma 6]{LDP} we have that $F_2(\eta)$ is also continuous in the weak topology. Recall, see Lemma~\ref{blocksLDP}, that the empirical fields $(\{ R_{i,\e}\}_{i=1,\ldots,x/\e},\{ \bar{R}_{i,\e}\}_{i=1,\ldots,\lfloor(c-x)/\e\rfloor})$ satisfy a LDP with rate function 
\[
I_B(\{\eta\}^\e_{x},\{\bar\eta\}^\e_{x,c})={\e}\sum_{i=1}^{x/\e}h(\eta_i|p)+{\e}\sum_{j=1}^{\lfloor(c-x)/\e\rfloor}h(\bar{\eta}_j|p).
\]
 We apply Varadhan's lemma to the limit of the integral term in \eqref{upperchi2} and get
 \begin{align}
   \label{eq-finalvar}
I'_u(x,c) &\triangleq \lim \limits_{k\to \infty}-\frac{1}{k}\log \int
  \exp\Big\{-\Big(k I_l^{g,\e}(\omega,x,c)-k\hat{I}^{\Phi}_{l,M,L}(x,R_{xk})\Big)^-\Big\} P(d\omega)\\
 \label{infimum2} &= \inf \limits_{\{\eta\}^\e_{x},\{\bar\eta\}^\e_{x,c}
 \in M_1^s(\Sigma_p)} \mathcal{W}(\{\eta\}^\e_{x},\{\bar\eta\}^\e_{x,c}),
 \end{align}
where
 \begin{align*}
\mathcal{W}(\{\eta\}^\e_{x},\{\bar\eta\}^\e_{x,c})&= \Big[\Big(\Delta_\e (\{\eta\}^\e_{x},\{\bar\eta\}^\e_{x,c})
-\e\log C_{\kappa} \Big)^+\!\!\!-\hat{I}^{\Phi}_{l,M,L}\Big(x,\frac{\e}{x} \sum \limits_{i=1}^{x/\e}\eta_i\Big)\Big]^-\\
&\qquad\qquad+{\e}\sum_{i=1}^{x/\e}h(\eta_i|p)+{\e}\sum_{j=1}^{\lfloor(c-x)/\e\rfloor}h(\bar{\eta}_j|p).
 \end{align*}
The next step is to replace 
$\hat{I}^{\Phi}_{l,M,L}$ with ${I}^{f}$ in the variational problem. First recall the definition of $I^*_u(x,c)$ in \eqref{Ifore}. Fixing $\e'>0$, there exist a vector $(\{\tilde\eta^{\e'}\},\{\tilde\eta^{*,\e'}\})$ such that \begin{align*}
I^*_u(x,c)&>\Big[\Big(\Delta_\e (\{\tilde\eta^{\e'}\},\{\tilde\eta^{*,\e'}\})-\e\log C_{\kappa} \Big)^+-{I}^{f}\Big(x,\frac{\e}{x} \sum \limits_{i=1}^{x/\e}\tilde\eta^{\e'}_{i}\Big)\Big]^-\\
&+{\e}\sum_{i=1}^{x/\e}h(\tilde\eta^{\e'}_{i}|p)+{\e}\sum_{j=1}^{\lfloor(c-x)/\e\rfloor}h(\tilde\eta_j^{*,\e'}|p)-\e'.
\end{align*}
From \eqref{infimum2},
\begin{align}
    \label{IforI} &I'_u(x,c)\leq \Big[\Big(\Delta_\e (\{\tilde\eta^{\e'}\},\{\tilde\eta^{*,\e'}\})
-\e\log C_{\kappa} \Big)^+\\
\nonumber &-\hat{I}^{\Phi}_{l,M,L}\Big(x,\frac{\e}{x} \sum \limits_{i=1}^{x/\e}\tilde\eta^{\e'}_{i}\Big)\Big]^-+{\e}\sum_{i=1}^{x/\e}h(\tilde\eta^{\e'}_{i}|p)+{\e}\sum_{j=1}^{\lfloor(c-x)/\e\rfloor}h(\tilde\eta_j^{*,\e'}|p).
\end{align}
Now recall, see \eqref{fUbound}, that for a fixed measure $\eta$, 
\begin{align*}
    \hat{I}^{\Phi}_{l,M,L}(x,\eta)&=\hat{I}^L(x'_l,\eta)+\alpha_6x'_l\Phi_{1}(M,\eta)+\alpha_7x'_l \Phi_{2,M}(L,\eta).
\end{align*}
From the definitions of $\Phi_1$ and $\Phi_2$ in \eqref{def1} and \eqref{def2}, using Lemma~\ref{lemmasigma} we obtain 
\begin{align*}
\lim\limits_{M\to \infty}\Phi_{1}(M,\eta)=\lim\limits_{L\to \infty}\Phi_{2,M}(L,\eta)=0,
\end{align*}
and since $\{\tilde\eta^{\e'}\}$ does not depend on the variables $l,M,L$, 
\begin{align*}
\lim\limits_{M\to \infty}\lim\limits_{L\to \infty}\hat{I}^{\Phi}_{l,M,L}\Big(x,\frac{\e}{x} \sum \limits_{i=1}^{x/\e}\tilde\eta^{\e'}_{i}\Big)={I}^{f}\Big(x'_l,\frac{\e}{x} \sum \limits_{i=1}^{x/\e}\tilde\eta^{\e'}_{i}\Big).
\end{align*}
From \eqref{eq-IL} we have $I^F$ is a point-wise limit of convex functions, hence it is convex and since it is defined on the interval $(0,1)$, it is also continuous on it. Taking the limit on $l$ we obtain
\begin{align}
\label{PhitoPhi}\lim\limits_{l \to 0}\lim\limits_{M\to \infty}\lim\limits_{L\to \infty}\hat{I}^{\Phi}_{l,M,L}\Big(x,\frac{\e}{x} \sum \limits_{i=1}^{x/\e}\tilde\eta^{\e'}_{i}\Big)={I}^{f}\Big(x,\frac{\e}{x} \sum \limits_{i=1}^{x/\e}\tilde\eta^{\e'}_{i}\Big).
\end{align}
Using \eqref{PhitoPhi} on \eqref{IforI} and comparing to \eqref{Ifore} we obtain,
\begin{align*}
    \lim\limits_{l\to 0}\lim\limits_{M\to \infty}\lim\limits_{L\to \infty}I'_u(x,c)&\leq I^*_u(x,c)+\e'.
\end{align*}
Since this holds for every $\e'>0$ and using
\[
\lim\limits_{l\to 0}\lim\limits_{M\to \infty}\alpha_3l+\alpha_6x'_le^{-M^{1+a\log \kappa}/(a\log M)}=0
\]
in \eqref{errorterm1}, it follows that
\begin{align}
     \label{chiIU} \lim \limits_{n\to \infty}& -\frac{1}{k}\log \chi \leq I^*_u(x,c).
\end{align}
Arguing similarly for the lower bound, we also have
\begin{align}
      \label{chiIL}    \lim \limits_{n\to \infty}& -\frac{1}{k}\log \chi \geq I^*_l(x,c),
\end{align}
which concludes the proof. 
\end{proof}
As a last preparatory step, we compute the minimization over measures of certain
functionals appearing in the expressions $I_u^*$ and $I_l^*$.
\begin{lemma}\label{IFlowerlemma} Define
\begin{equation}
 \label{F3def}  F_3(\{\bar\eta\}^\e_{x,c}):= \max \limits_{1\leq j\leq \lfloor(c-x)/\e\rfloor} \sum_{i=1}^j \int \log\rho_0(\omega) \bar{\eta}_i(d\omega),
\end{equation}
and let 
\begin{equation}
  \label{IFdef} I^F_c(y):= \inf\Big\{\sum_{j=1}^{\lfloor(c-x)/\e\rfloor}h(\bar{\eta}_j|p): F_3(\{\bar\eta\}^\e_{x,c})=y   \Big\}. 
\end{equation}
Then 
\begin{equation}
    \label{IFgeqsy}I^F_c(y)\geq sy.
\end{equation}
\end{lemma}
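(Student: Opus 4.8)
The plan is to reduce the $\lfloor(c-x)/\e\rfloor$-block variational problem in \eqref{IFdef} to a one-block Cram\'er estimate and then invoke Lemma~\ref{lemmamins}.

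\emph{Step 1 (one-block bound).} For an admissible vector $(\bar\eta_1,\dots,\bar\eta_{\lfloor(c-x)/\e\rfloor})$ put $m_j:=\int\log\rho_0(\w)\,\bar\eta_j(d\w)$. The first claim is that $h(\bar\eta_j\,|\,p)\geq I_m(m_j)$ for each $j$, with $I_m$ as in \eqref{Imdef}. This is a soft consequence of two standard facts that do not involve the RWRE. First, because the reference measure $p^{\mathbb Z}$ is a product, for any stationary $\mu$ the chain rule for relative entropy together with joint convexity of $H(\cdot\,\|\,\cdot)$ gives $H(\mu_{[0,n)}\,\|\,p^{\otimes n})=\sum_{i=0}^{n-1}\mathbb E_\mu\big[H(\mu(x_i\in\cdot\mid x_{[0,i)})\,\|\,p)\big]\geq n\,H(\mu_0\,\|\,p)$, whence, dividing by $n$ and letting $n\to\infty$, $h(\mu\,|\,p)\geq H(\mu_0\,\|\,p)$ for the one-site marginal $\mu_0$. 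Second, the Gibbs (Donsker--Varadhan) variational principle, used with test functions $\lambda\log\rho_0$, $\lambda\in\mathbb R$, gives $H(\mu_0\,\|\,p)\geq \lambda m_j-\log E_p[\rho_0^\lambda]$ for every $\lambda$, hence $H(\mu_0\,\|\,p)\geq I_m(m_j)$ after optimizing over $\lambda$.

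\emph{Step 2 ($I_m(z)\geq sz$ for all real $z$).} When $s\in(1,\infty)$, Lemma~\ref{lemmamins} states $\inf_{z>0}I_m(z)/z=s$, i.e. $I_m(z)\geq sz$ for $z>0$; for $z\leq 0$ this is immediate since $I_m\geq 0$ (it is a Legendre transform vanishing at $E_p\log\rho_0<0$) while $sz\leq 0$. When $s=\infty$ one argues as in Lemma~\ref{lemmamins}, where $I_m(z)=\infty$ for $z>0$.

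\emph{Step 3 (conclusion).} Suppose $F_3(\{\bar\eta\}^\e_{x,c})=y$. By \eqref{F3def} there is an index $j^\ast$ with $\sum_{i=1}^{j^\ast}m_i=y$. Discarding the non-negative terms with $j>j^\ast$ and combining Steps~1--2 with additivity,
\[
\sum_{j=1}^{\lfloor(c-x)/\e\rfloor}h(\bar\eta_j\,|\,p)\ \geq\ \sum_{j=1}^{j^\ast}h(\bar\eta_j\,|\,p)\ \geq\ \sum_{j=1}^{j^\ast}I_m(m_j)\ \geq\ s\sum_{j=1}^{j^\ast}m_j\ =\ sy .
\]
Taking the infimum over all admissible $\{\bar\eta\}^\e_{x,c}$ subject to $F_3=y$ yields \eqref{IFgeqsy}; when $s=\infty$ and $y>0$ the same display forces the left-hand side to be $+\infty$. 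The only point needing a little care is Step~1, but it is a routine combination of the entropy chain rule, convexity of relative entropy, and the Gibbs variational formula; the genuinely useful structural input is that the constraint $F_3=y$ selects a prefix $j^\ast$ of blocks carrying total drift exactly $y$, which is what reduces the multi-block problem to a single additive sum.
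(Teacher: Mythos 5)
Your proof is correct, and it takes a cleaner, more elementary path than the paper's at two points. Where the paper establishes $h(\bar\eta_j\,|\,p)\geq I_m(m_j)$ by citing the contraction principle together with Cram\'er's theorem (identifying the contracted rate function $I^*_m$ with $I_m$), you derive the same inequality directly from the entropy chain rule against a product reference measure, Jensen's inequality for the convex map $\nu\mapsto H(\nu\,\|\,p)$, and the Donsker--Varadhan variational formula; this avoids invoking any LDP machinery and only needs the lower bound, which is all the lemma requires. Then, where the paper isolates the first argmax index $N(\mathbf x)$, lumps the partial sum via convexity of $I_m$ into $N(\mathbf x)\,I_m(y/N(\mathbf x))$, and applies $I_m(z)\geq sz$ at the single point $z=y/N(\mathbf x)>0$, you instead observe that $I_m(z)\geq sz$ holds for every real $z$ (trivially for $z\leq 0$ since $I_m\geq 0\geq sz$, and from Lemma~\ref{lemmamins} for $z>0$), apply it termwise, and sum up over the prefix $j\leq j^*$ realizing the maximum. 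Both routes bottom out in the same bound from Lemma~\ref{lemmamins}; yours dispenses with the convexity reduction and the contraction principle, which makes the argument slightly more self-contained, at the cost of spelling out the entropy estimate that the paper implicitly delegates to cited theorems.
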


\begin{proof}
The result is trivial for $y\leq 0$ and hence we only consider $y> 0$. Also,
if $s=\infty$ then the support of $\log \rho_0$ under $p$ is contained in
$\mathbb{R}_-$ and then $I_c^F(y)=\infty$ for $y>0$. So
we consider in the sequel $s\in (1,\infty)$.
Since the only appearance
of the measures $\{\bar\eta\}^\e_{x,c}$ in $F_3(\{\bar\eta\}^\e_{x,c})$ is through integration
against the test function $ F_1(\eta)=\int \log \rho_0(\omega)\eta(d\omega)$, we consider the
auxiliary problem
\begin{equation}
\label{Imdef2}  I^*_m(x)= \inf\limits_{\eta \in M_1^s(\Sigma_p)}\Big\{h(\eta|p): \int \log \rho_0(\omega) \eta(d\omega)=x \Big\}.
\end{equation}
By the contraction principle, see \cite[Theorem 4.2.1]{DZ}, $I^*_m(x)$
 is the rate function of the LDP for 
the random variable $\int \log \rho_0(\omega) R_n(d\omega)$, under $P=p^\mathbb{Z}$,
and thus
by Cramer's theorem, we have that  the function $I^*_m(\cdot)$ coincides with
the function $I_m(\cdot)$ defined in \eqref{Imdef} when $\eta = P$.

To be able to reduce the analysis of $F_3(\{\bar\eta\}^\e_{x,c})$ to a single variable, we use \eqref{F3def} and \eqref{Imdef2} in \eqref{IFdef}, obtaining
\begin{equation}
  \label{IFdef2} I^F_c(y)=\inf\Big\{\sum_{i=1}^{\lfloor(c-x)/\e\rfloor}I_m(x_i): \max_{1\leq j \leq \lfloor(c-x)/\e\rfloor} x_1+\ldots+x_j=y \Big\}. 
\end{equation}
Now for a vector $\mathbf{x}\in \mathbb{R}^{\lfloor (c-x)/\e\rfloor}$, define
\begin{equation}
    \label{Ndef} N(\mathbf{x})=\inf\Big\{m\in \mathbb{Z}^+: \max \limits_{1\leq j\leq \lfloor (c-x)/\e\rfloor} \sum \limits_{i=1}^jx_i =\sum \limits_{i=1}^m x_i  \Big\}.
\end{equation}
Using \eqref{Ndef} we have 
\begin{align}
    \nonumber I^F_c(y)&=\inf\Big\{\sum_{i=1}^{\lfloor(c-x)/\e\rfloor}I_m(x_i): \max_{1\leq j \leq \lfloor(c-x)/\e\rfloor} x_1+\ldots+x_j=y \Big\}\\
   \nonumber &\geq \inf\Big\{\sum_{i=1}^{N(\mathbf{x})}I_m(x_i): \max_{1\leq j \leq \lfloor(c-x)/\e\rfloor} x_1+\ldots+x_j=y \Big\}\\
   \label{IFfinallower} &{\geq}\inf\Big\{N(\mathbf{x}) I_m\Big(\frac{y}{N(\mathbf{x})}\Big): \max_{1\leq j \leq \lfloor(c-x)/\e\rfloor} x_1+\ldots+x_j=y \Big\},
\end{align}
where the last inequality uses the convexity of $I_m$.
By 
Lemma~\ref{lemmamins}, for any positive $z$ we have $I_m(z)\geq sz$, and hence
\begin{equation}
    N(\mathbf{x}) I_m\Big(\frac{y}{N(\mathbf{x})}\Big)\geq sy,
\end{equation}
which together with \eqref{IFfinallower} proves \eqref{IFgeqsy}.
\end{proof}

We are finally ready to prove Proposition \ref{limitteo}.  

\begin{proof}[Proof of Proposition~\ref{limitteo}]

To prove the proposition we simplify the variational problems in Lemma~\ref{lemmaLU} using the property of positive velocity and then take the limit on $\e\to 0$ to match the lower and upper bounds. 

As in Lemma~\ref{lemmaLU} the calculations for $I^*_u(x,c)$ and $I^*_l(x,c)$ are similar, so we will only develop $I^*_l(x,c)$ as it has more technical details to consider.  
Define
\begin{align}
\label{F4def}F_4(\{\eta\}^\e_{x})=\max \limits_{1\leq j\leq x/\e} \sum_{i=1}^j \int \log\rho_0(\omega) \eta_{i}(d\omega)-\sum_{i=1}^{x/\e} \int \log\rho_0(\omega) \eta_{i}(d\omega)\geq 0.
\end{align}
Using \eqref{F3def} and \eqref{F4def}, $\Delta_\e (\{\eta\}^\e_{x},\{\bar\eta\}^\e_{x,c})$, defined in \eqref{Deltadef}, can be writen as 
\begin{equation}
\label{Delta2}\Delta_\e (\{\eta\}^\e_{x},\{\bar\eta\}^\e_{x,c})=\e F_3(\{\bar\eta\}^\e_{x,c})-\e F_4(\{\eta\}^\e_{x}).
\end{equation}
Substituting
\eqref{Delta2} in \eqref{Ilower} and using \eqref{IFdef}, $I^*_l(x,c)$ reduces to
\begin{align}
 \label{ILfinal}  I_l^*(x,c)&=\inf \limits_{\{\eta\}^\e_{x} \in M_1^s(\Sigma_p), y\in \mathbb{R}} \Big\{ \Big[ \e\Big(y - F_4(\{\eta\}^\e_{x})+\log C_{\kappa}\Big)^+-I^{F}\Big(x,\frac{\e}{x} \sum \limits_{i=1}^{x/\e}\eta_i\Big)\Big]^-\\
\nonumber&\qquad+{\e}\sum_{i=1}^{x/\e}h(\eta_i|p)+\e I^F_c(y)\Big\}.
 \end{align}
Now observe that 
 \begin{align}
\nonumber \Big[& \e\Big(y - F_4(\{\eta\}^\e_{x})+\log C_{\kappa}\Big)^+-I^{F}\Big(x,\frac{\e}{x} \sum \limits_{i=1}^{x/\e}\eta_i\Big)\Big]^-+\e\sum_{i=1}^{x/\e}h(\eta_i|p)+\e I^F_c(y)\\
\nonumber  &\stackrel{(d)}{\geq} I^{F}\Big(x,\frac{\e}{x} \sum \limits_{i=1}^{x/\e}\eta_i\Big)- \e\Big(y - F_4(\{\eta\}^\e_{x})+\log C_{\kappa}\Big)^++\e\sum_{i=1}^{x/\e}h(\eta_i|p)+\e I^F_c(y)\\
\label{byey}&\stackrel{(e)}{\geq}  I^{F}\Big(x,\frac{\e}{x} \sum \limits_{i=1}^{x/\e}\eta_i\Big)+\e\sum_{i=1}^{x/\e}h(\eta_i|p)-\e\log C_{\kappa}+\e (I^F_c(y)-y^+),
 \end{align}
where in $(d)$ we used that $[a-b]^-\geq b-a$ and in $(e)$ we used that $(a+b)^+\leq a^++b^+$ and that $F_4(\{\eta\}^\e_{x})\geq 0$. By
Lemma~\ref{IFlowerlemma} we have that
\begin{equation}
\label{boundFy} I^F_c(y)-y^+\geq 0
\end{equation}
Substituting
\eqref{boundFy} in \eqref{byey} and taking the infimum over $\{\eta\}^\e_{x} \in M_1^s(\Sigma_p)$ and $y\in \mathbb{R}$ we obtain 
\begin{equation}
\label{ILbound}    I_l^*(x,c) \geq \inf \limits_{\{\eta\}^\e_{x} \in M_1^s(\Sigma_p)} \Big\{ I^{F}\Big(x,\frac{\e}{x} \sum \limits_{i=1}^{x/\e}\eta_i\Big)+\e\sum_{i=1}^{x/\e}h(\eta_i|p)\Big\}- \e\log C_{\kappa}.
\end{equation}
Next we
deal with the upper bound. As with $I_l^*(x,c)$, we substitute
\eqref{Delta2} and \eqref{IFdef} in \eqref{Ifore} to obtain
\begin{align}
 \label{IUfinal}  I_u^*(x,c)&=\inf \limits_{\{\eta\}^\e_{x} \in M_1^s(\Sigma_p), y\in \mathbb{R}} \Big\{ \Big[ \e\Big(y - F_4(\{\eta\}^\e_{x})-\log C_{\kappa}\Big)^+-I^{F}\Big(x,\frac{\e}{x} \sum \limits_{i=1}^{x/\e}\eta_i\Big)\Big]^-\\
\nonumber&\qquad+{\e}\sum_{i=1}^{x/\e}h(\eta_i|p)+\e I^F_c(y)\Big\}.
 \end{align}
 For $s\in (1,\infty)$ set $y^*=0$ while for $s=\infty$ set $y^*\leq 0$
 so that $I^F(y^*)<\infty$.
 The infimum in \eqref{IUfinal}
can be bounded above by substituting $y=y^*\leq 0$, hence 
\begin{align}
\nonumber I_u^*(x,c)&\leq\inf \limits_{\{\eta\}^\e_{x} \in M_1^s(\Sigma_p)} \Big\{ \Big[ \e\Big(y^* - F_4(\{\eta\}^\e_{x})-\log C_{\kappa}\Big)^+-I^{F}\Big(x,\frac{\e}{x} \sum \limits_{i=1}^{x/\e}\eta_i\Big)\Big]^-\\
\nonumber&\qquad+{\e}\sum_{i=1}^{x/\e}h(\eta_i|p)+\e I^F_c(y^*)\Big\}\\
\label{IUbound}&=\inf \limits_{\{\eta\}^\e_{x} \in M_1^s(\Sigma_p)} \Big\{ I^{F}\Big(x,\frac{\e}{x} \sum \limits_{i=1}^{x/\e}\eta_i\Big)+{\e}\sum_{i=1}^{x/\e}h(\eta_i|p)\Big\}+\e I^F_c(y^*).
\end{align}
So with \eqref{ILbound} and \eqref{IUbound} applied to \eqref{chiIL} and \eqref{chiIU}, we obtain
\begin{align}
\label{chi2bounds}-\e I^F_c(y^*)\leq \inf \limits_{\{\eta\}^\e_{x} \in M_1^s(\Sigma_p)} \Big\{ I^{F}\Big(x,\frac{\e}{x} \sum \limits_{i=1}^{x/\e}\eta_i\Big)+\e\sum_{i=1}^{x/\e}h(\eta_i|p)\Big\}+\lim \limits_{n\to \infty}& \frac{1}{k}\log \chi\leq \e \log C_{\kappa}.
\end{align}
To conclude the proof we show that 
\begin{equation}
\label{INF-1}\inf \limits_{\{\eta\}^\e_{x} \in M_1^s(\Sigma_p)} \Big\{ I^{F}\Big(x,\frac{\e}{x} \sum \limits_{i=1}^{x/\e}\eta_i\Big)+\e\sum_{i=1}^{x/\e}h(\eta_i|p)\Big\}
\end{equation}
does not depend on $\e$ so we can take the limit $\e\to 0$ in \eqref{chi2bounds}. First, since specific relative entropy is affine, we have
\begin{equation}
{\e}\sum_{i=1}^{x/\e}h(\eta_i|p)=xh\Big(\frac{\e}{x} \sum_{i=1}^{x/\e}\eta_i\mid p\Big),
\end{equation}
so \eqref{INF-1} becomes 
\begin{equation}
\label{INF-2}\inf \limits_{\{\eta\}^\e_{x} \in M_1^s(\Sigma_p)} \Big\{ I^{F}\Big(x,\frac{\e}{x} \sum \limits_{i=1}^{x/\e}\eta_i\Big)+xh\Big(\frac{\e}{x} \sum_{i=1}^{x/\e}\eta_i\mid p\Big)\Big\}.
\end{equation}
Now observe that $\frac{\e}{x} \sum_{i=1}^{x/\e}\eta_i \in M_1^s(\Sigma_p)$ and thus
\begin{align}
\label{lowerE}&\inf \limits_{\{\eta\}^\e_{x} \in M_1^s(\Sigma_p)} \Big\{ I^{F}\Big(x,\frac{\e}{x} \sum \limits_{i=1}^{x/\e}\eta_i\Big)+xh\Big(\frac{\e}{x} \sum_{i=1}^{x/\e}\eta_i\mid p\Big)\Big\} \\
&\qquad \geq \inf \limits_{\eta \in M_1^s(\Sigma_p)} \Big\{ I^{F}\big(x,\eta \big)+xh\big(\eta\mid p\big)\Big\}.\nonumber
\end{align}
At the same time when considering the infimum over the vectors $\{\eta\}^\e_{x}$ we could restrict ourselves to having all the measures being equal, i.e. $\eta_i=\eta$ for $i=1,\ldots,x/\e$, and hence the reverse inequality also holds, therefore
\begin{align}
\label{equalE} &\inf \limits_{\{\eta\}^\e_{x} \in M_1^s(\Sigma_p)} \Big\{ I^{F}\Big(x,\frac{\e}{x} \sum \limits_{i=1}^{x/\e}\eta_i\Big)+xh\Big(\frac{\e}{x} \sum_{i=1}^{x/\e}\eta_i\mid p\Big)\Big\} \\
&\qquad= \inf \limits_{\eta \in M_1^s(\Sigma_p)} \Big\{ I^{F}\big(x,\eta \big)+xh\big(\eta\mid p\big)\Big\}.
\nonumber
\end{align}
Finally, observe that $I^F_c(y^*)$ is finite: if $s=\infty$ we chose $y^*$ that
way, while  $s\in (1,\infty)$ implies that
there are
positive and negative drifts in the support of $p$, 
and hence $0$ is in the domain of $I^F_c$. With this, substituting  
\eqref{equalE} in \eqref{chi2bounds}, we take $\e\to 0$ to conclude the proof. 
\end{proof}

\section{Appendix A: LDP for the conditional random walk on random environment}
In this short appendix, we show that $I^F$ in \eqref{eq-IL} has, for ergodic laws
on the environment, a natural interpretation in terms of the rate function
for the large deviations of hitting times in a conditioned environment. 

 \begin{prop}\label{appendixA}  Fix   $\eta\in M_1^e(\Sigma)$. Then, 
\begin{align*}
\lim \limits_{k\to \infty}-\frac{1}{k}\log \pr^{\hat{\omega}}_1[ \tau_{xk}\leq k]=\lim \limits_{k\to \infty}I^F(x,R_{xk})=I^F(x,\eta), \quad 
    \eta-a.s..
\end{align*}

\end{prop}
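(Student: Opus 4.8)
The plan is to deduce both equalities from the block estimates and the approximation lemmas of Sections~\ref{sec-block} and~\ref{sec-lem13}; the only new ingredient is that, for ergodic $\eta$, Birkhoff's theorem may be invoked directly in place of the large deviation principle of Lemma~\ref{blocksLDP} and of Varadhan's lemma used in the proof of Lemma~\ref{lemmaLU}. The interesting case is $x\in(0,1)$; for $x\ge 1$ both sides are $+\infty$.

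\textbf{The second equality.} By Lemma~\ref{lemma2} the environments $\hat\omega^L$ decrease in $L$, so $\hat\omega^\infty:=\lim_{L\to\infty}\hat\omega^L$ exists pointwise and is a measurable shift-equivariant function of $\omega$ with $\hat\omega^\infty_i\in[\kappa,1]$. By the monotone coupling of the walks one gets $\hat\phi_L(\lambda,\cdot)\downarrow\phi(\hat\omega^\infty,\lambda,\cdot)$ for $\lambda\le0$; hence, writing $I^L(x,R_{xk})=\hat I_0^L(x,k,\omega)$ as in the proof of Lemma~\ref{lemmaLU} and interchanging the increasing limit in $L$ with the supremum over $\lambda$ and the finite sum, $I^F(x,R_{xk})=\sup_{\lambda\le0}\{\lambda-\frac1k\sum_{i=0}^{xk-1}\log\phi(\hat\omega^\infty,\lambda,\theta^i\omega)\}$. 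Since $\log\phi(\hat\omega^\infty,\lambda,\cdot)$ is bounded and, by the computation in \eqref{lambdabound}, the optimal $\lambda$ lies in the fixed compact interval $[\log(\kappa(1-x)/x),0]$, Birkhoff's theorem together with convexity lets us pass the supremum through $k\to\infty$ and obtain $\lim_k I^F(x,R_{xk})=\sup_{\lambda\le0}\{\lambda-x\int\log\phi(\hat\omega^\infty,\lambda)\eta(d\omega)\}$, $\eta$-a.s.; dominated convergence in $L$ and the monotonicity of $I_L^\phi(x,\eta)$, see \eqref{Iphidef} and \eqref{eq-IL}, identify the right-hand side with $\lim_L I_L^\phi(x,\eta)=I^F(x,\eta)$.

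\textbf{The first equality.} Here we must compare $-\frac1k\log\pr_1^{\hat\omega}[\tau_{xk}\le k]=-\frac1k\log(1-f(\omega,x,k))$ with $I^F(x,\eta)$, the obstruction being that the genuine conditioned environment $\hat\omega$ is not ergodic under $\eta$. We reuse the sandwich \eqref{fLbound}--\eqref{fUbound}: for $J>L$ and $J\ge L+M$,
\[
\Big(\hat I^L(x,R_{xk})-\tfrac{\alpha_8J}{k}\Big)^+\le -\tfrac1k\log(1-f(\omega,x,k))\le \hat I^{\Phi}_{l,M,L}(x,R_{xk})+\beta(k'_l,l,M,L),
\]
with $\hat I^{\Phi}_{l,M,L}$ and $\beta$ as in \eqref{fUbound}--\eqref{def3}. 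For fixed $L,M,l,J$, Birkhoff's theorem gives $\hat I^L(x'_l,R_{xk})\to I_L^\phi(x'_l,\eta)$ and $\hat I^L(x,R_{xk})\to I_L^\phi(x,\eta)$, and, using that $\bar\xi$ is bounded by $C_\kappa$, $\Phi_{1}(M,R_{xk})\to\int\bar\xi(a\log M,\omega)\eta(d\omega)$, $\Phi_{2,M}(L,R_{xk})\to M(2M+1)\int\bar\xi(L-1,\omega)\eta(d\omega)$, while $\beta(k'_l,l,M,L)\to\alpha_3l+\alpha_6x'_le^{-M^{1+a\log\kappa}/(a\log M)}$, all as $k\to\infty$. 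Sending now $L\to\infty$ (Lemma~\ref{lemma2} for $I_L^\phi\uparrow I^F$, Lemma~\ref{lemmasigma} for the $\Phi_{2}$ term), then $M\to\infty$ (Lemma~\ref{lemmasigma} for the $\Phi_{1}$ term, and $1+a\log\kappa>0$ after fixing $a$ small enough for the exponential), then $l\to0$ (continuity of $I^F(\cdot,\eta)$ on $(0,1)$)---exactly the passage from \eqref{upperchi}--\eqref{lowerchi} to \eqref{chiIU}--\eqref{chiIL}---yields $\limsup_k(-\frac1k\log(1-f))\le I^F(x,\eta)$, while the matching $\liminf$ follows from the left inequality by letting $k\to\infty$ with $J=L+1$ fixed and then $L\to\infty$. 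Hence $\lim_k(-\frac1k\log\pr_1^{\hat\omega}[\tau_{xk}\le k])=I^F(x,\eta)$, $\eta$-a.s.

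The main obstacle is the upper bound in the first equality: it rests on the full chain of approximations $\hat I_{1,M}\to\hat I_{1,M}^L\to\hat I_1^L\to\hat I_0^L$ supplied by Lemmas~\ref{lemmaPhat}, \ref{lemmaptau}, \ref{lemmaapproxPhat} and~\ref{Jto1}, and one must verify that every resulting error term vanishes in the prescribed order $k\to\infty$, $L\to\infty$, $M\to\infty$, $l\to0$; this order is forced precisely by the non-ergodicity of $\hat\omega$ and is what Lemma~\ref{lemmasigma} is tailored to handle. Everything else reduces to bookkeeping with the estimates of Sections~\ref{sec-block}--\ref{sec-lem13}.
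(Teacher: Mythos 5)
Your proposal is correct and follows essentially the same route as the paper: both rest on Lemmas~\ref{lemmaPhat}, \ref{Jto1}, \ref{lemmaapproxPhat}, \ref{lemmaptau} and~\ref{lemmasigma}, with Birkhoff's theorem replacing the large-deviation machinery used in Lemma~\ref{lemmaLU}. Your organization differs in two minor but useful ways. For the first equality you reuse the explicit sandwich \eqref{fLbound}--\eqref{fUbound} and send $k\to\infty$, then $L\to\infty$, $M\to\infty$, $l\to0$; the paper instead introduces a sequence $J=J(k)\to\infty$ with $J=o(k)$ and proves two auxiliary limit statements --- the net effect is the same, and your accounting for the order of limits (forced by the constraint $J>L+M$ in Lemma~\ref{lemmaapproxPhat}) is spelled out a bit more carefully. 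For the second equality you introduce the pointwise limit $\hat\omega^\infty=\lim_L\hat\omega^L$, interchange the increasing limit in $L$ with the supremum over $\lambda$ and the finite Ces\`aro sum, and then apply Birkhoff together with the compactness of the interval $[\log(\kappa(1-x)/x),0]$ for the maximizer; the paper's proof only establishes $\lim_L\lim_k\hat I^L_J=I^F(x,\eta)$ and leaves the passage to $\lim_k\lim_L\hat I_0^L=\lim_k I^F(x,R_{xk})$ implicit, so your argument genuinely fills in that step. The only point worth flagging is that in your upper bound you must take $J\ge L+M$ and hence cannot hold $J$ fixed once $L,M$ grow; this is handled harmlessly since the $\alpha_8J/k$ error vanishes for any fixed $J$ once $k\to\infty$, and $J$ is reset before sending $L,M\to\infty$, but it would be worth making this bookkeeping explicit.
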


\begin{proof}
In Lemma~\ref{lemmaPhat} and Lemma~\ref{lemmaapproxPhat} we were able to approximate $\pr^{\hat\omega}_1[\tau_{xk}\leq k]$ using $\hat{I}^L$ and error terms. Consider $J=J(k)$ such that $\lim_{k\to \infty} J(k)=\infty$, but $J=o(k)$. In this proof we show that 
\begin{enumerate}
\item  $\lim \limits_{L\to \infty} \lim \limits_{k\to \infty}\hat I_J^L(x,k,\omega)= I^F(x,\eta)$, $\eta$ almost surely.
    \item 
       $ 
\lim \limits_{L\to \infty} \lim \limits_{k\to \infty} \left|-\frac{1}{k}\log \pr^{\hat\omega}_1[\tau_{xk}\leq k]-\hat I_J^L(x,k,\omega) \right|=0,$
$\eta$  almost surely.
\end{enumerate}

For the first statement observe that, see \eqref{Iphidef},
\begin{equation}
 \label{IL1}  \hat{I}_0^L(x,k,\omega)=I^{\phi}_{L}(x,R_{xk}). 
\end{equation}

Moreover by Lemma~\ref{Jto1},
\[
\lim \limits_{k\to \infty}|\hat{I}_0^L(x,k,\omega)-\hat{I}_J^L(x,k,\omega) |=0,\qquad\eta\mbox{-a.s. }
\]
Since  $I^{\phi}_{L}(x,R_{xk})\to I^{\phi}_{L}(x,\eta)$ almost surely as $k\to \infty$, the statement is proved. 

For the second statement, according to Lemma~\ref{lemmaapproxPhat} we have 
\begin{align*}
    &
    |\hat{I}_J(x,k,\omega)-\hat{I}^L_J(x,k,\omega)|\\
    &\leq  \frac{\alpha_6}{k}\sum \limits_{i=J}^{xk-1}\pr^{\hat{\omega}^L}_i[\tau_{i+1}\geq M]+ \frac{\alpha_6}{k}\sum \limits_{i=J}^{xk-1}\pr^{\hat{\omega}}_i[\tau_{i+1}\geq M]
    +\frac{\alpha_7}{k}\sum\limits_{i=J}^{xk-1}H^\omega_i(L,M)\\
    &\leq  \frac{2\alpha_6}{k}\sum \limits_{i=J}^{xk-1}\pr^{\hat{\omega}}_i[\tau_{i+1}\geq M]+\frac{\alpha_7}{k}\sum\limits_{i=J}^{xk-1}H^\omega_i(L,M). 
\end{align*}
We prove now that $\eta$ almost surely, for every $M$,
\begin{equation}
   \label{limitH} \lim \limits_{L \to \infty} \lim \limits_{k\to \infty} \frac{1}{k}\sum\limits_{i=J}^{xk-1}H^\omega_i(L,M) =0
\end{equation}
  Indeed, 
  \begin{align}
      \nonumber\frac{1}{k}\sum\limits_{i=J}^{xk-1}H^\omega_i(L,M)&=\frac{M}{k}\sum\limits_{i=J}^{xk-1} \sum \limits_{j=i-M}^{i+M}\xi_{j}(L-1,\omega)\\
      \nonumber&\leq\frac{M}{k}\sum\limits_{i=J}^{xk-1} \sum \limits_{j=i-M}^{i+M}\bar\xi(L-1,\theta^{j-M-L+1}\omega)\\
      \label{Hbound}&=\frac{M}{k}\sum \limits_{j=-M}^{M}\sum\limits_{i=J}^{xk-1} \bar\xi(L-1,\theta^{i+j-M-L+1}\omega)\\
      \nonumber&\to Mx\int \sum \limits_{j=-M}^{M}\bar\xi(L-1,\omega) \eta(d\omega), \quad \mbox { as }k\to \infty,
  \end{align}
  where the limit is due to the ergodic theorem.
 Lemma~\ref{lemmasigma} then concludes the proof of \eqref{limitH}. 
Recalling that $J=o(k)$,  the proof of Proposition~\ref{appendixA} is completed using \eqref{limitH} and Lemma~\ref{lemmaptau}. 
\end{proof}

\bibliography{main}

\end{document}